\newtheorem{thm}{Theorem}[section]
\newtheorem{prop}[thm]{Proposition}
\newtheorem{lem}[thm]{Lemma}
\theoremstyle{definition}
\newtheorem{defn}[thm]{Definition}
\theoremstyle{definition}
\numberwithin{thm}{section} 
\numberwithin{equation}{section}
\numberwithin{figure}{section}
\newcommand{\ix}{\mathcal{I}_X}
\newcommand{\bd}{{\bf d}}
\newcommand{\br}{{\bf r}}
\newcommand{\bc}{{\bf C}}
\newcommand{\bs}{{\bf S}}
\newcommand{\thec}{\Theta}
\newcommand{\phis}{\Phi}
\newcommand{\dom}{\textup{dom\,}}
\newcommand{\ptx}{\mathcal{PT}_X}
\newcommand{\R}{\mathcal{R}}
\newcommand{\RT}{\widetilde{\R}}
\newcommand{\RTE}{\widetilde{\R}_E}
\newcommand{\LL}{\mathcal{L}}
\newcommand{\LT}{\widetilde{\LL}}
\newcommand{\LTE}{\LT_E}
\newcommand{\szgg}{\textup{Sz}(G)}
\newcommand{\pb}{\overline{\psi}}
\begin{document}

\title[Extending the ESN Theorem]{Extending the Ehresmann-Schein-Nambooripad Theorem}
\author{Christopher Hollings}
\address{Centro de \'{A}lgebra da Universidade de Lisboa\\ Av.\ Prof.\ Gama Pinto 2\\ 1649-003 Lisboa\\ Portugal}
\email{cdh500@cii.fc.ul.pt}
\date{\today}
\subjclass[2000]{20\,M\,18,\ 20\,L\,05,\ 20\,M\,15}
\keywords{restriction semigroup, inverse semigroup, inductive category, inductive groupoid, premorphism}
\thanks{This work was completed as part of Project POCTI/0143/2007 of CAUL, financed by FCT and FEDER, and also as part of FCT post-doctoral research grant SFRH/BPD/34698/2007.  Thanks must go to both Mark Lawson and Victoria Gould for a number of useful comments.}
\begin{abstract}
We extend the `$\vee$-premorphisms' part of the
Ehres\-mann-Schein-Nambooripad Theorem to the case of two-sided
restriction semigroups and inductive categories, following on from
a result of Lawson (1991) for the `morphisms' part. However, it is
so-called `$\wedge$-premorphisms' which have proved useful in
recent years in the study of partial actions.  We therefore obtain
an Ehresmann-Schein-Nambooripad-type theorem for
(ordered) $\wedge$-premorphisms in the case of two-sided restriction
semigroups and inductive categories.  As a corollary, we obtain
such a theorem in the inverse case.
\end{abstract}
\maketitle

\section{Introduction}
\label{sec:intro}

In their study of $E$-unitary covers for inverse semigroups,
McAlister and Reilly \cite{mr1977} made the following definitions:

\begin{defn}\cite[Definition~3.4]{mr1977}
\label{defn:join-i}
Let $S$ and $T$ be inverse semigroups.  A
\emph{$(\vee,i)$-premorphism} is a function $\theta:S\rightarrow
T$ such that
\begin{enumerate}
    \item[($\vee 1$)] $(st)\theta\leq (s\theta)(t\theta)$.
\end{enumerate}
\end{defn}

\begin{defn}\cite[Definition~4.1]{mr1977}
\label{defn:wedge-i} Let $S$ and $T$ be inverse semigroups.  A
\emph{$(\wedge,i)$-premorphism} is a function $\theta:S\rightarrow
T$ such that\footnote{We place a $'$ on condition $(\wedge 2)'$,
as we will shortly replace this condition by a weaker one and we
wish to reserve the label $(\wedge 2)$ for that.}
\begin{enumerate}
    \item[$(\wedge 1)$] $(s\theta)(t\theta)\leq (st)\theta$;
    \item[$(\wedge 2)'$] $(s\theta)^{-1}=s^{-1}\theta$.
\end{enumerate}
\end{defn}

(We note that in \cite{mr1977}, a $(\vee,i)$-premorphism was
termed a \emph{$v$-prehomomorphism}, whilst in
\cite[p.~80]{lawson1998}, it is termed simply a
\emph{prehomomorphism}.  In \cite{mr1977}, a
$(\wedge,i)$-premorphism was called a
\emph{$\wedge$-prehomomorphism}; in \cite[p.~80]{lawson1998}, it
is called a \emph{dual prehomomorphism}.)

These functions were central to McAlister and Reilly's
constructions: see their Theorems~3.9 and~4.5 \cite{mr1977}.

In the present paper, we will study generalisations of the
functions of Definitions~\ref{defn:join-i} and~\ref{defn:wedge-i}
in the case of so-called \emph{two-sided restriction semigroups}
(until recently, termed \emph{weakly $E$-ample semigroups}). These
are an extremely natural class of semigroups which generalise
inverse semigroups and which arise from partial transformation
monoids in a manner analogous to the way in which inverse
semigroups arise from symmetric inverse monoids.  The concrete
description of such a semigroup is as follows.  Let $\ptx$ denote
the collection of all partial mappings of a set $X$, i.e., all
mappings $A\rightarrow B$, where $A,B\subseteq X$.  We compose
elements of $\ptx$ (from left to right) according to the usual rule for composition of
partial mappings, namely, that employed in the symmetric inverse
monoid $\ix$. Under this composition, $\ptx$ clearly forms a
monoid, which we term the \emph{partial transformation monoid of $X$}.  Similarly, $\ptx^*$, the \emph{dual partial transformation monoid of $X$}, is the collection of all partial mappings of $X$ with composition performed \emph{from right to left}.  
We denote by $I_A$ the partial identity mapping on a
subset $A\subseteq X$; the collection $E_X$ of all such partial
identities forms a subsemilattice of both $\ptx$ and $\ptx^*$.  We now consider the unary operation on partial transformations which is given by
$\alpha\mapsto I_{\dom\alpha}.$  
In $\ptx$, we denote this operation by $^+$; in $\ptx^*$, we denote it by $^*$.
Let $S$ be a semigroup.  We call $S$ a \emph{two-sided restriction semigroup} if
\begin{enumerate}
	\item $S$ is isomorphic to a subsemigroup of some $\ptx$ that is closed under $^+$ (via an isomorphism $\phi$);
	\item $S$ is isomorphic to a subsemigroup of some $\mathcal{PT}_Y^*$ that is closed under $^*$ (via an isomorphism $\psi$);
	\item the semilattices $\{(s\phi)^+:s\in S\}$ and $\{(s\psi)^*:s\in S\}$ are isomorphic.
\end{enumerate}
Such semigroups have appeared in a range of contexts
(see \cite{gh}) and have an alternative, abstract characterisation
which will be used throughout this paper (see
Section~\ref{sec:rest}).

In extending the functions of Definitions~\ref{defn:join-i}
and~\ref{defn:wedge-i} to the case of two-sided restriction
semigroups, our particular interest is in obtaining a
generalisation of the celebrated
\emph{Ehresmann-Schein-Nambooripad Theorem} (hereafter, \emph{ESN
Theorem}).  This theorem establishes a fundamental connection
between inverse semigroups and inductive groupoids.  The formal
statement of the ESN Theorem (as it appears in \cite{lawson1998},
for which book it provides the main focus) is as follows:
\begin{thm}\cite[Theorem~4.1.8]{lawson1998}
\label{esn} The category of inverse semigroups and
$(\vee,i)$-premorphisms is isomorphic to the category of inductive
groupoids and ordered functors; the category of inverse semigroups
and morphisms is isomorphic to the category of inductive groupoids
and inductive functors.
\end{thm}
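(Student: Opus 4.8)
The plan is to realise both isomorphisms by a single pair of mutually inverse functors: $\bg$, carrying an inverse semigroup to an inductive groupoid, and $\bs$, carrying an inductive groupoid to an inverse semigroup, and then to check that the two refinements of the morphism classes correspond under them. I describe the object parts first. Given an inverse semigroup $S$, let $\bg(S)$ be the graph whose objects are the idempotents $E(S)$ and whose arrows are the elements of $S$, with $\bd(s)=ss^{-1}$, $\br(s)=s^{-1}s$; take the composite of $s\colon e\to f$ and $t\colon f\to g$ (so $s^{-1}s=tt^{-1}$) to be the product $st$, the identity at $e$ to be $e$ itself, and the inverse of $s$ to be $s^{-1}$. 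A direct check makes this a groupoid. Order $E(S)$ by $e\le f\iff ef=e$ and $S$ by the natural partial order $s\le t\iff s=(ss^{-1})t$; compatibility of this order with the involution and with the restricted products is the classical verification that $\bg(S)$ is an \emph{ordered} groupoid, and it is \emph{inductive} because $E(S)$ is a meet-semilattice under $\le$ and, for $e\le\bd(s)$, the element $(e|s):=es$ is the unique restriction of $s$ to $e$. Conversely, let $G$ be an ordered groupoid whose poset of objects $E$ is a meet-semilattice. For $x\colon p\to q$ and $e\le p$ write $(e|x)$ for the unique restriction of $x$ to $e$, and for $e\le q$ write $(x|e):=(e|x^{-1})^{-1}$; define a binary operation on $\mor(G)$ by the \emph{pseudoproduct}
\[
x\cdot y \;=\; (x|\ell)\ast(\ell|y),\qquad \ell=\br(x)\wedge\bd(y),
\]
where $\ast$ is the groupoid composition. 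Put $\bs(G):=(\mor(G),\cdot)$. One shows that $\cdot$ is associative, that the idempotents of $\bs(G)$ are exactly the identities $1_e$ (so they form a copy of $E$ and commute), and that $xx^{-1}x=x$ with the groupoid inverse serving as semigroup inverse --- hence $\bs(G)$ is an inverse semigroup.

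Next I would show $\bg$ and $\bs$ are mutually inverse on objects. For $\bs\bg(S)=S$ it is enough to note that in $\bg(S)$ the meet is $\br(s)\wedge\bd(t)=(s^{-1}s)(tt^{-1})=\ell$ and that the pseudoproduct equals $(s\ell)(\ell t)=s\ell t=st$, so the original multiplication is recovered; the object set, $\bd$, $\br$, the order and the restrictions are visibly unchanged. For $\bg\bs(G)\cong G$ one checks that the idempotents of $\bs(G)$ reproduce $E$ with its order, that $\bd$ and $\br$ reproduce the $G$-domain and $G$-codomain, that the restricted product of $\bs(G)$ (the case $\br(x)=\bd(y)$, where $\ell=\bd(y)$ and both restrictions are trivial) is the original composition, and that the natural partial order on $\bs(G)$ is the order of $G$. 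I then turn to morphisms. On underlying maps, $\bg$ leaves $\theta\colon S\to T$ unchanged as a function of arrow sets, and $\bs$ leaves a functor unchanged as a function of arrow sets; the whole content is (i) a $(\vee,i)$-premorphism induces an ordered functor, (ii) an ordered functor induces a $(\vee,i)$-premorphism, and (iii) under this bijection $\theta$ is a semigroup morphism precisely when the associated functor is inductive.

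For (i), a short computation from $(\vee 1)$ alone shows that any $(\vee,i)$-premorphism $\theta$ satisfies $(s^{-1})\theta=(s\theta)^{-1}$ and $(ss^{-1})\theta=(s\theta)(s\theta)^{-1}$, so $\theta$ takes idempotents to idempotents and respects $\bd$, $\br$ and identities; a further short argument, using $(\vee 1)$ together with the identity $s=(st)t^{-1}$ valid when $s^{-1}s=tt^{-1}$, shows $\theta$ is multiplicative on $\bg(S)$-composable pairs, so $\theta$ is a functor, and order-preservation (again from $(\vee 1)$) makes it an ordered functor. For (ii), if $F$ is an ordered functor then, using $st=(s|\ell)\ast(\ell|t)$ in $\bg(S)$, the preservation of restrictions by $F$, and the compatibility of the natural partial order of $T$ with multiplication, one obtains $(st)\theta=F\bigl((s|\ell)\bigr)\ast F\bigl((\ell|t)\bigr)\le(s\theta)(t\theta)$, so $\theta$ satisfies $(\vee 1)$. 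For (iii), a semigroup morphism preserves products of idempotents, hence meets of objects, so its functor is inductive; conversely, an inductive functor satisfies $\br(s\theta)\wedge\bd(t\theta)=(s^{-1}s)\theta\wedge(tt^{-1})\theta=\ell\theta$, and applying it to $st=(s|\ell)\ast(\ell|t)$ upgrades the inequality of (ii) to $(st)\theta=(s\theta)(t\theta)$ for all $s,t$. Functoriality of $\bg$ and $\bs$ --- which also records that composites and identities are preserved on the morphism side --- together with the fact that both round trips are the identity on underlying maps, then gives the two stated isomorphisms of categories.

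The step I expect to be the main obstacle is the associativity of the pseudoproduct on an inductive groupoid: both bracketings of $x\cdot y\cdot z$ must be identified with a single threefold pseudoproduct obtained by restricting $x$, $y$ and $z$ to a common object, and this needs the functoriality of restriction with respect to $\ast$, its good behaviour under $\bd$ and $\br$, the control of corestrictions by the involution, and --- crucially --- the meet-semilattice identities together with the ordered-groupoid compatibility axioms, which are precisely the hypotheses that single out inductive groupoids among ordered ones. A lighter but still non-trivial point is the lemma in (i) that $(\vee 1)$ on its own forces $\theta$ to preserve inverses and to send $ss^{-1}$ to $(s\theta)(s\theta)^{-1}$; granting the object-level correspondence and this lemma, the remaining verifications are routine.
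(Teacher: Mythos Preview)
Your proposal is a correct outline of the classical direct proof of the ESN Theorem, essentially the argument of \cite[Theorem~4.1.8]{lawson1998}: build $\bg(S)$ from the restricted product, build $\bs(G)$ from the pseudoproduct, and verify the two functors are mutually inverse on objects and on the two classes of arrows. The individual steps you flag --- that $(\vee 1)$ alone forces preservation of inverses and of $ss^{-1}$ (Lawson's Lemma~3.1.5, the paper's Lemma~\ref{lem:veei}), multiplicativity on composable pairs via a domain/range comparison, and associativity of the pseudoproduct --- are the right checkpoints, and none of them hides a genuine obstacle.

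The paper, however, does \emph{not} give this direct proof. Theorem~\ref{esn} is quoted from \cite{lawson1998} as a known result; the paper's own contribution is to prove the more general restriction-semigroup/inductive-category version (Theorem~\ref{thm:wEa-join}) and then recover the `$(\vee,i)$-premorphisms' part of Theorem~\ref{esn} as a corollary: one observes that for inverse $S$ the construction $\bc(S)$ is an inductive groupoid and $\bs(G)$ is an inverse semigroup, and Lemma~\ref{lem:veei-veew} identifies $(\vee,i)$-premorphisms with $(\vee,r)$-premorphisms in the inverse case. The `morphisms' part is likewise inherited from Lawson's earlier result \cite{lawson1991} on inductive functors. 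So the paper's route is \emph{top-down} (prove the general case, specialise), whereas yours is \emph{bottom-up} (prove the inverse case directly). Your approach is more self-contained and avoids the two-sided restriction machinery ($^+$, $^*$, $\RTE$, $\LTE$); the paper's approach buys a single uniform argument covering both inverse and restriction semigroups, with the key technical step (Lemma~\ref{lem:so-useful}) doing the work that your ``$s=(st)t^{-1}$'' trick does in the inverse-only setting.
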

(An \emph{ordered functor} is simply an order-preserving functor,
whilst a \emph{inductive functor} is an ordered functor which also
preserves the `meet' operation in an inductive groupoid.)

The `morphisms' part of this result has already been extended to
the case of two-sided restriction semigroups by Lawson
\cite{lawson1991}; in Lawson's result, inductive groupoids are
replaced by inductive categories and morphisms by
(2,1,1)-morphisms. We will complete the generalisation by
considering the `$(\vee,i)$-pre\-morph\-isms' part. Furthermore, we
will obtain a version of this theorem for
$(\wedge,i)$-premorphisms, via the more general case of two-sided restriction semigroups; it is this type of premorphism
which has proved most useful in recent years in the study of
partial actions.  We note that arbitrary $(\wedge,i)$-premorphisms
do not compose to give another $(\wedge,i)$-premorphism. This
problem is solved (both in the inverse case, and in the theory we
will develop for restriction semigroups) if we also insist that
the functions be order-preserving.

The structure of the paper is as follows.  We begin with
Section~\ref{sec:rest}, in which we record some results on
two-sided restriction semigroups which will be of use in later
sections. Further preliminary definitions and results follow in Section~\ref{sec:prelims}: in the first part of the section, we introduce various category-theoretic notions, including that of an inductive category; in the second part, we define the \emph{Szendrei expansion} of an inductive groupoid --- a piece of algebraic machinery which will be needed briefly in Section~\ref{sec:inv}.  In Section~\ref{sec:join} we define the notion of
\emph{$(\vee,r)$-premorphisms} for two-sided restriction
semigroups and prove a new version (Theorem~\ref{thm:wEa-join}) of the ESN Theorem with these
functions as the arrows of a category of two-sided
restriction semigroups. We show that the existing result in the
inverse case follows from ours as a corollary.  In
Section~\ref{sec:meet}, we move onto the `$\wedge$-premorphisms'
part of the paper.  By adapting concepts encountered in the study
of partial actions, we prove yet another version (Theorem~\ref{thm:wEajoin-indcat}) of the ESN
Theorem, this time for two-sided restriction semigroups and
\emph{ordered $(\wedge,r)$-premorphisms}.  In fact, we will need to prove
two versions of this theorem: it will turn out that the most naive
notion of `$(\wedge,r)$-premorphism', i.e., that obtained by replacing condition $(\wedge 2)'$ in Definition~\ref{defn:wedge-i} by a condition involving $^+$ and $^*$, will not suffice for our
purposes and we will therefore also need the notion of a
\emph{strong} $(\wedge,r)$-premorphism (see Theorem~\ref{thm:wEajoin-indcat-strong}).  A version of the ESN
Theorem for inverse semigroups and ordered $(\wedge,i)$-premorphisms (Theorem~~\ref{thm:esn-new}) will
follow in Section~\ref{sec:inv}, as a corollary to the results of
Section~\ref{sec:meet}, thanks to our introduction of strong $(\wedge,r)$-premorphisms.

\section{Restriction semigroups}
\label{sec:rest}

In this section, we summarise the pertinent details of the theory
of two-sided restriction semigroups.  The material of this section
appears in a range of published sources (see, for example,
\cite{fountain1977b,fountain1979,gg2003,js2001,lawson1991}).
However, there are only two places where many of the relevant
definitions and results have been collated into a single resource:
the notes \cite{gould} and Chapter~2 of the author's Ph.D.\
thesis~\cite{thesis}.  The reader is referred to these sources for
further details and for more extensive references.  For a more
easily accessible published source, see Section~1 of \cite{h2007},
written for monoids.

Notice from our comments in the Introduction that we can easily
define one-sided versions of these semigroups: \emph{left
restriction semigroups} (subsemigroups of $\ptx$ closed under $^+$) and \emph{right
restriction semigroups} (subsemigroups of $\mathcal{PT}_Y^*$ closed under $^*$). However, in this
paper, we will only be interested in the two-sided version.  From
here on, we therefore drop the qualifier `two-sided'; henceforth,
whenever we use the term `restriction semigroup', it can be taken
to mean the two-sided version.  We also note here that
left/right/two-sided restriction semigroups have appeared in a
range of contexts under a number of different names: see \cite{gh}
for further references.  The term `restriction semigroup' is a
recent attempt to harmonise terminology and originates with
\cite{cm}.

In the Introduction, we saw the concrete characterisation of
restriction semigroups as subsemigroups of partial transformation
monoids.  We now give an abstract description.  Let
$S$ be a semigroup and suppose that $E\subseteq E(S)$ is a
subsemilattice of $S$.  We define the (equivalence) relations
$\RT_E$ and $\LT_E$ on $S$, with respect to $E$, by the rules that
\begin{gather*}
a\,\RTE\,b\Longleftrightarrow\forall e\in E\ [ea=a\Leftrightarrow eb=b]; \\
a\,\LTE\,b\Longleftrightarrow\forall e\in E\ [ae=a\Leftrightarrow
be=b],
\end{gather*}
for $a,b\in S$.  Thus, two elements $a,b$ are $\RT_E$-related if,
and only if, they have the same left identities in $E$. Similarly,
$a\,\LTE\,b$ if, and only if, $a$ and $b$ have the same right
identities in $E$.  The abstract definition of a restriction
semigroup runs as follows:

\begin{defn}
\label{defn:wlEa} A semigroup $S$ with subsemilattice $E\subseteq
E(S)$ is \emph{(two-sided) restriction semigroup (with respect to
$E$)} if
\begin{enumerate}
    \item every element $a$ is both $\RTE$- and $\LTE$-related to an idempotent in $E$, denoted $a^+$ and $a^*$, respectively;
    \item $\RTE$ is a left congruence, whilst $\LTE$ is a right congruence;
    \item for all $a\in S$ and all $e\in E$, $ae=(ae)^+a$ and $ea=a(ea)^*$.
\end{enumerate}
\end{defn}
Thus, in a restriction semigroup $S$,
$$a\,\RTE\,b\Leftrightarrow a^+=b^+\qquad\textup{and}\qquad a\,\LTE\,\Leftrightarrow a^*=b^*.$$
The idempotents $a^+$ and $a^*$ are left and right identities for
$a$, respectively.  We note that $a^+$ and $a^*$ are necessarily
unique.  It is also clear that if $e\in E$, then $e^+=e=e^*$.  

If we were to consider only the parts of
Definition~\ref{defn:wlEa} which relate to $\RTE$ and $^+$
(respectively, $\LTE$ and $^*$), then we would have a \emph{left}
(respectively, \emph{right}) restriction semigroup.

Using \cite[Theorem~6.2]{gould}, it is possible to connect the
concrete and abstract approaches to restriction semigroups by
showing that left (right) restriction semigroups are, up to
isomorphism, \emph{precisely} (2,1)-subalgebras of (dual) partial
transformation monoids.  However, two-sided restriction semigroups
cannot be regarded as (2,1,1)-subalgebras of partial
transformation monoids.

Note that $\R\subseteq\RT_E$ and $\LL\subseteq\LTE$, for any $E$.  It is easy to see that, in a regular semigroup, $\R=\RT_{E(S)}$ and
$\LL=\LT_{E(S)}$.  It follows that restriction semigroups
generalise inverse semigroups, since every inverse semigroup is a
restriction semigroup with $a^+=aa^{-1}$ and $a^*=a^{-1}a$.
Restriction semigroups also generalise the ample (formerly,
type-A) semigroups of Fountain~\cite{fountain1977b,fountain1979}.

We note a pair of useful identities which follow easily from
condition (2) of Definition~\ref{defn:wlEa}:
\begin{lem}\cite[Proposition~1.6(2)]{fountain1979}
\label{lem:+} Let $S$ be a restriction semigroup, for some
$E\subseteq E(S)$, and let $s,t\in S$.  Then $(st)^+=(st^+)^+$ and
$(st)^*=(s^*t)^*$.
\end{lem}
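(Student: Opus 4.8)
The plan is to reduce both identities to statements about the relations $\RTE$ and $\LTE$, exploiting the characterisation recorded immediately after Definition~\ref{defn:wlEa}: in a restriction semigroup, $a\,\RTE\,b$ holds precisely when $a^+=b^+$, and dually $a\,\LTE\,b$ holds precisely when $a^*=b^*$. So instead of computing the idempotents $(st)^+$ and $(st^+)^+$ directly, I would show that $st$ and $st^+$ are $\RTE$-related (and, for the other identity, that $st$ and $s^*t$ are $\LTE$-related).

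For the first identity, note that $(st)^+=(st^+)^+$ is equivalent to $st\,\RTE\,st^+$. Now $t\,\RTE\,t^+$ holds by the very definition of $t^+$ in condition~(1) of Definition~\ref{defn:wlEa}, and $\RTE$ is a left congruence by condition~(2); hence left-multiplication by $s$ gives $st\,\RTE\,st^+$, as required. The second identity is handled dually: $(st)^*=(s^*t)^*$ amounts to $st\,\LTE\,s^*t$, and this follows from $s\,\LTE\,s^*$ by right-multiplying by $t$, using that $\LTE$ is a right congruence.

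I do not expect any real obstacle here: the argument is a two-line application of the congruence conditions. The only point that genuinely needs care is to pair each unary operation with the congruence on the correct side --- $^+$ with the \emph{left} congruence $\RTE$, and $^*$ with the \emph{right} congruence $\LTE$ --- since it is precisely the one-sidedness of these congruences that permits the extra factor (an $s$ on the left, a $t$ on the right) to be introduced.
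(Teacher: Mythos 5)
Your proof is correct and is exactly the argument the paper has in mind: the lemma is stated as a pair of identities that ``follow easily from condition (2) of Definition~\ref{defn:wlEa}'', i.e.\ from $t\,\RTE\,t^+$ together with $\RTE$ being a left congruence (and dually for $\LTE$), which is precisely your two-line reduction.
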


Both $\ptx$ and $\ptx^*$ possess an obvious
natural partial order (i.e., a partial order which is compatible
with multiplication and which restricts to the usual partial order
on idempotents), defined by
\begin{equation}
\label{eq:ordering}
\alpha\leq\beta\Longleftrightarrow\alpha=\beta|_{\dom\alpha}.
\end{equation}
In the abstract characterisation of a restriction semigroup $S$,
the ordering of \eqref{eq:ordering} becomes the following natural
partial order
\begin{equation}
\label{eq:npo} 
a\leq b\Longleftrightarrow a=eb\Longleftrightarrow a=bf,
\end{equation}
for some idempotents $e,f\in E$.  Equivalently,
\begin{equation}
\label{eq:npo2} 
a\leq b\Longleftrightarrow a=a^+b\Longleftrightarrow a=ba^*.
\end{equation}
This equivalence is justified (for the `$^+$' part) in \cite[\S
1]{h2007}.

\section{Further preliminaries}
\label{sec:prelims}

In this section, we describe the relevant existing results connecting restriction semigroups and inductive categories, and introduce some algebraic machinery for later use.

\subsection{Categories}
\label{sub:cat}
We begin by giving an explicit definition of an arbitrary
category.  The results quoted in this subsection
will take care of the `objects' parts of our main results: the upcoming ESN-type Theorems~\ref{thm:wEa-join}, \ref{thm:wEajoin-indcat}, \ref{thm:wEajoin-indcat-strong} and~\ref{thm:esn-new}.

Let $C$ be a class and let $\cdot$ be a partial binary operation
on $C$. For $x,y\in C$, we will write `$\label{cat-prod}\exists
x\cdot y$' to mean `the product $x\cdot y$ is defined'.  Whenever
we write `$\exists (x\cdot y)\cdot z$', it will be understood that
we mean $\exists x\cdot y$ \emph{and} $\exists (x\cdot y)\cdot z$.
An element $e\in C$ is \emph{idempotent} if $\exists e\cdot e$ and
$e\cdot e=e$. The \emph{identities} (or \emph{objects}) of $C$ are
those idempotents $e$ which satisfy: $[\exists e\cdot x\Rightarrow
e\cdot x=x]$ and $[\exists x\cdot e\Rightarrow x\cdot e=x]$.  We
denote the subset of identities of $C$ by $C_o$ (`\textit{o}' for
`objects').
\begin{defn}
\label{defn:cat} Let $C$ be a class and let $\cdot$ be a partial
binary operation on $C$.  The pair $(C,\cdot)$ is a
\emph{category} if the following conditions hold:
\begin{enumerate}
    \item[(Ca1)] $\exists x\cdot(y\cdot z)\Longleftrightarrow \exists
(x\cdot y)\cdot z$, in which case $x\cdot(y\cdot z)=(x\cdot
y)\cdot z$;
    \item[(Ca2)] $\exists x\cdot(y\cdot z)\Longleftrightarrow \exists
x\cdot y$ and $\exists y\cdot z$;
    \item[(Ca3)] for each $x\in C$, there exist unique identities
$\bd(x),\br(x)\in C_o$ such that $\exists \bd(x)\cdot x$ and
$\exists x\cdot\br(x)$.
\end{enumerate}
The identity ${\bf d}(x)$ is called the \emph{domain} of $x$ and
${\bf r}(x)$ is the \emph{range} of $x$.  If $C$ is simply a set,
then we call $(C,\cdot)$ a \emph{small} category.  A \emph{groupoid} is a small category in which the following additional condition holds:
\begin{enumerate}
    \item[(G)] for each $x\in G$, there is an $x^{-1}\in G$ such that $\exists x\cdot x^{-1}$ and $\exists x^{-1}\cdot x$, with $x\cdot x^{-1}=\bd(x)$ and $x^{-1}\cdot x=\br(x)$.
\end{enumerate}
\end{defn}
This is essentially the definition of \cite[p.~78]{lawson1998},
but with domain and range switched, since we will be composing
functions from left to right.  We note that if $(C,\cdot)$ is a
category and $x,y\in C$, then
$$\exists x\cdot y\Longleftrightarrow \br(x)=\bd(y).$$
Note further that if $(C,\cdot)$ has precisely one object $e$,
then all products are necessarily defined and it follows that
$(C,\cdot)$ is, in fact, a monoid with identity $e$.  Thus, a
category may be regarded as a generalisation of a monoid.

Before proceeding further, we make the important observation that
in the remainder of this paper, the word `category' will be used
in two slightly different, though equivalent, senses.  First, we
will have `categories' as generalised monoids, in the sense of
Definition~\ref{defn:cat}; second, we will have `categories' in
the more traditional `objects' and `morphisms' sense (as in, for
example, \cite[Definition 1.1]{jacobson1980}).  All categories
considered in the first sense will be small categories.  
Thus, for example, the `inductive categories' of
Theorem~\ref{thm:wEa-join} are (small) categories in the sense of
Definition~\ref{defn:cat}, but the category \emph{of} inductive
categories (and that of restriction semigroups) is regarded as a
category in the traditional `objects' and `morphisms' sense.

We now introduce an ordering on a category $C$:
\begin{defn}
\label{defn:ord-cat} Let $(C,\cdot)$ be a category (in the sense
of Definition~\ref{defn:cat}) and let $C$ be partially ordered by
$\leq$.  The triple $(C,\cdot,\leq)$ is an \emph{ordered} category
if the following conditions hold:
\begin{enumerate}
    \item[(Or1)] $a\leq c$, $b\leq d$, $\exists a\cdot b$ and
$\exists c\cdot d\Longrightarrow a\cdot b\leq c\cdot d$;
    \item[(Or2)] $a\leq b\Longrightarrow {\bf r}(a)\leq {\bf r}(b)$ and ${\bf d}(a)\leq {\bf d}(b)$;
    \item[(Or3)]    \begin{enumerate}
            \item[(i)] $\label{Or3}f\in C_o,a\in C,f\leq {\bf r}(a)\Longrightarrow$
there exists a unique element, denoted $a|f$, such that $a|f\leq
a$ and ${\bf r}(a|f)=f$;
            \item[(ii)] $f\in C_o,a\in C,f\leq {\bf d}(a)\Longrightarrow$
there exists a unique element, denoted $f|a$, such that $f|a\leq
a$ and ${\bf d}(f|a)=f$.
            \end{enumerate}
\end{enumerate}
An \emph{ordered groupoid} is a small ordered category in which condition (G) holds.
\end{defn}
The element $a|f$ of condition (Or3)(i) is called the
\emph{corestriction} (of $a$ to $f$), whilst the element $f|a$ of
condition (Or3)(ii) is called the \emph{restriction} (of $f$ to
$a$).  We note that, for $e,f\in C_o$, there is no ambiguity in
the notation `$e|f$': the restriction $e|f$ and the corestriction
$\label{coincide} e|f$ coincide whenever both are defined.  Note
further that in this case we have $e=f=e|f$, by definition of
restrictions and corestrictions.  
It is clear that any function
which respects domains, ranges and ordering (for example, an
ordered functor) will also respect restrictions and
corestrictions.

We record the following properties of the restriction and corestriction in an ordered category:

\begin{lem}
\label{lem:rest-ord} Let $(C,\cdot,\leq)$ be an ordered category.
Let $a\in C$ and $e,f\in C_o$ with $f\leq e\leq\br(a)$.  Then
$(a|e)|f=a|f$.  Consequently, $a|f\leq a|e$.

Dually, if $a\in C$ and $e,f\in C_o$ with $f\leq e\leq\bd(a)$,
then $f|(e|a)=f|a$, hence $f|a\leq e|a$.
\end{lem}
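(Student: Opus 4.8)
The plan is to prove both statements directly from the uniqueness clause in condition (Or3), since each restriction (corestriction) is characterized by two properties: being below the original arrow, and having a prescribed range (domain). So to prove $(a|e)|f = a|f$ it suffices to check that $(a|e)|f$ satisfies the two defining properties of $a|f$, namely $(a|e)|f \leq a$ and $\br\big((a|e)|f\big) = f$.

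First I would unpack the hypotheses: since $f \leq e \leq \br(a)$, the corestriction $a|e$ exists by (Or3)(i), with $a|e \leq a$ and $\br(a|e) = e$. Then, because $f \leq e = \br(a|e)$, the corestriction $(a|e)|f$ exists, again by (Or3)(i), with $(a|e)|f \leq a|e$ and $\br\big((a|e)|f\big) = f$. Now transitivity of $\leq$ gives $(a|e)|f \leq a|e \leq a$, so $(a|e)|f \leq a$; and we already have $\br\big((a|e)|f\big) = f$. These are exactly the two conditions that characterize $a|f$ (which exists since $f \leq \br(a)$), so the uniqueness part of (Or3)(i) forces $(a|e)|f = a|f$. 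The consequence $a|f \leq a|e$ is then immediate: $a|f = (a|e)|f \leq a|e$.

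The dual statement for restrictions follows by the symmetric argument using (Or3)(ii): from $f \leq e \leq \bd(a)$ one gets $e|a$ with $\bd(e|a) = e$ and $e|a \leq a$, then $f|(e|a)$ with $\bd\big(f|(e|a)\big) = f$ and $f|(e|a) \leq e|a \leq a$; uniqueness in (Or3)(ii) identifies this with $f|a$, and hence $f|a = f|(e|a) \leq e|a$.

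There is no real obstacle here — the lemma is a routine consequence of the uniqueness built into the axioms. The only point requiring a moment's care is making sure the intermediate corestriction $(a|e)|f$ is actually well-defined, i.e.\ that its formation is licensed by (Or3)(i); this needs the observation $\br(a|e) = e \geq f$, which is exactly why the hypothesis is stated as $f \leq e \leq \br(a)$ rather than merely $f, e \leq \br(a)$. Everything else is transitivity of the order plus the uniqueness clause.
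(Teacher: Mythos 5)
Your proof is correct: the uniqueness clause in (Or3)(i) (resp.\ (Or3)(ii)) applied to the element $(a|e)|f$ (resp.\ $f|(e|a)$), together with transitivity of $\leq$, is exactly the right argument, and you correctly note why $f\leq e$ is needed to form the iterated (co)restriction. The paper itself gives no proof, simply citing Lemma~4.4 of Lawson (1991), and your argument is the standard one underlying that reference, so there is nothing to add.
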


(Our Lemma~\ref{lem:rest-ord} is essentially Lemma~4.4 of \cite{lawson1991}.)

Note that in an inductive groupoid $G$, corestrictions may be defined in terms of restrictions: for $a\in G$ and $f\in G_o$ with $f\leq\br(a)$, the
corestriction $a|f$ is given by $a|f=(f|a^{-1})^{-1}$ \cite[p.~178]{gilbert2005}.

\begin{lem}
\label{lem:a=dab} Let $(C,\cdot,\leq)$ be an ordered category and
let $a,b\in C$.  If $a\leq b$, then $a=\bd(a)|b=a|\br(b)$.  In
particular, $a=\bd(a)|a=a|\br(a)$.
\end{lem}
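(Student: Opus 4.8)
The plan is to read both equalities directly off the uniqueness clauses in axiom (Or3): the key observation is that $a$ itself is a witness for the (co)restriction appearing in the statement, so it must coincide with it.

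First I would treat $a=\bd(a)|b$. Since $a\leq b$, axiom (Or2) gives $\bd(a)\leq\bd(b)$, so the hypothesis of (Or3)(ii) is met with $f=\bd(a)$ and $b$ in the role of the base morphism; hence there is a \emph{unique} element $\bd(a)|b$ satisfying $\bd(a)|b\leq b$ and $\bd(\bd(a)|b)=\bd(a)$. But $a$ enjoys exactly these two properties ($a\leq b$ by hypothesis, and $\bd(a)=\bd(a)$ trivially), so by uniqueness $a=\bd(a)|b$. The equality with $a|\br(b)$ is then obtained by the dual argument: by (Or2) we have $\br(a)\leq\br(b)$, so (Or3)(i) supplies a unique element lying below the appropriate base morphism and having range $\br(a)$, and $a$ is again such a witness, forcing the equality. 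One should be mindful here that (Or3)(i) corestricts a morphism to a \emph{smaller} object, so it is the inequality $\br(a)\leq\br(b)$ from (Or2) — not the reverse — that makes the relevant corestriction available; with that in hand the argument is verbatim the one used on the domain side.

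The `in particular' clause is then just the special case $b=a$ of what has been shown: here both one-sided restrictions are automatically defined, since $\leq$ is reflexive, and $a$ is trivially a witness with the correct domain (respectively, range).

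I do not expect a genuine obstacle: the whole content is the remark that $a$ witnesses the defining property of the relevant (co)restriction, whence the conclusion follows from the uniqueness built into (Or3). The only things to keep straight are which of $a,b$ serves as the base morphism in each application of (Or3)(i)/(Or3)(ii), and the use of (Or2) to guarantee the object inequality needed for that (co)restriction to exist in the first place.
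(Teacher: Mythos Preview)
Your approach via the uniqueness clause in (Or3) is exactly right, and the first half ($a=\bd(a)|b$) is cleanly handled. The paper itself does not give a proof --- it simply remarks that the result appears in Armstrong's work on type-$A$ semigroups and that the argument carries over unchanged --- so your direct argument from the axioms is a perfectly adequate replacement.

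One point deserves comment. The second equality as printed in the statement reads $a|\br(b)$, but for that corestriction to be defined (Or3)(i) requires $\br(b)\leq\br(a)$, whereas (Or2) only gives $\br(a)\leq\br(b)$; so $a|\br(b)$ is not in general defined. The intended assertion is $a=b|\br(a)$: with $f=\br(a)\leq\br(b)$, axiom (Or3)(i) applied with base morphism $b$ yields a unique element below $b$ with range $\br(a)$, and $a$ is such an element. Your ``dual argument'' is in fact precisely this --- your phrase ``having range $\br(a)$'' shows you are really corestricting $b$, not $a$ --- so the mathematics is sound; just be aware that the symbol string $a|\br(b)$ in the statement is a misprint for $b|\br(a)$ (which also agrees with the ``in particular'' clause on setting $b=a$). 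Your closing caution about keeping straight which of $a,b$ is the base morphism is well taken, and this is exactly where the printed notation would lead one astray.
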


(This result appears in \cite{armstrong1984} for inductive cancellative categories; the proof carries over the present case without modification.)

In an ordered category $(C,\cdot,\leq)$, if the greatest lower
bound of $e,f\in C_o$ exists, then we denote it by $e\wedge f$.

\begin{defn}
An \emph{inductive} category $(C,\cdot,\leq)$ is an ordered
category in which the following additional condition holds:
\begin{enumerate}
    \item[(I)] $e,f\in C_o\Longrightarrow e\wedge f$ exists in $C_o$.
\end{enumerate}
An \emph{inductive groupoid} is a small inductive category in which condition (G) holds.
\end{defn}

The foregoing sequence of definitions has been building to the
following result, which appears in \cite[\S 5]{lawson1991}:
\begin{thm}
\label{thm:wEa-ic} Let $S$ be a restriction semigroup with respect to some semilattice $E$, and let $S$ have natural
partial order $\leq$.  If we define the \emph{restricted product}
$\cdot$ in $S$ by
\begin{equation*}
\label{eq:restricted} a\cdot b=   \begin{cases}
                            ab                              &\text{if }a^*=b^+; \\
                            \text{undefined}    &\text{otherwise}, \\
                        \end{cases}
\end{equation*}
then $(S,\cdot,\leq)$ is an inductive category with $S_o=E$, ${\bf
d}(x)=x^+$ and ${\bf r}(x)=x^*$.
\end{thm}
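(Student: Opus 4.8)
The plan is to check, one at a time, the defining conditions of an inductive category --- the category axioms (Ca1)--(Ca3), the ordered-category axioms (Or1)--(Or3), and the meet condition (I) --- for $(S,\cdot,\leq)$, using throughout the abstract axioms of Definition~\ref{defn:wlEa}, the identities of Lemma~\ref{lem:+}, and the two descriptions \eqref{eq:npo}, \eqref{eq:npo2} of the natural partial order. I would begin by pinning down the objects. Since $e^+=e=e^*$ for $e\in E$, we have $\exists\,e\cdot e$ with $e\cdot e=e$; moreover $\exists\,e\cdot x$ forces $e=x^+$, whence $e\cdot x=x^+x=x$, and dually $\exists\,x\cdot e$ forces $x\cdot e=x$, so $E\subseteq S_o$. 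Conversely, if $a\in S_o$ then $a^2=a$ and $a^+=a^*$ in $S$; as $\exists\,a\cdot a^+$ and $a\cdot a^+=aa^*=a$, the identity property of $a$ gives $a=a^+\in E$. Hence $S_o=E$, and then (Ca3) is immediate with $\bd(x)=x^+$, $\br(x)=x^*$: indeed $\exists\,x^+\cdot x$, $\exists\,x\cdot x^*$ with $x^+\cdot x=x=x\cdot x^*$, while $\exists\,e\cdot x$ (resp.\ $\exists\,x\cdot f$) with $e,f\in E$ forces $e=x^+$ (resp.\ $f=x^*$), giving uniqueness.

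For the remaining category axioms the essential computation is that, whenever $\exists\,y\cdot z$ (that is, $y^*=z^+$), Lemma~\ref{lem:+} together with $yz^+=yy^*=y$ yields $(yz)^+=(yz^+)^+=y^+$, and dually $(xy)^*=(x^*y)^*=y^*$ when $x^*=y^+$. Feeding these into the definition of the restricted product shows that each of $\exists\,x\cdot(y\cdot z)$ and $\exists\,(x\cdot y)\cdot z$ is equivalent to ``$x^*=y^+$ and $y^*=z^+$''; this gives (Ca2) and the definedness half of (Ca1), and the associativity of $S$ supplies the equality $x\cdot(y\cdot z)=(x\cdot y)\cdot z$.

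Turning to the order: (Or1) is exactly compatibility of the natural partial order with multiplication in $S$, and (Or2) reduces to the fact that $a\leq b$ implies $a^+\leq b^+$ and $a^*\leq b^*$, which follows from \eqref{eq:npo2} and Lemma~\ref{lem:+} via $a^+=(a^+b)^+=(a^+b^+)^+=a^+b^+$. For (Or3)(i), given $f\in E$ with $f\leq\br(a)=a^*$, I would set $a|f:=af$: condition (3) of Definition~\ref{defn:wlEa} gives $af=(af)^+a\leq a$, Lemma~\ref{lem:+} gives $(af)^*=(a^*f)^*=f$, and any $b\leq a$ with $b^*=f$ satisfies $b=ab^*=af$ by \eqref{eq:npo2}, so $a|f$ is unique; (Or3)(ii) is dual, with $f|a:=fa$. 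Finally, for (I), $ef$ is a lower bound of $e,f$ lying in $S_o=E$, since $\leq$ restricts to the semilattice order on $E$; the one point needing a little care is that $ef$ is the greatest lower bound in all of $S$, for which one observes that any $c\leq e$ with $e\in E$ is idempotent with $c=ce=ec$ (from \eqref{eq:npo2} and the axioms), so that $c\leq e$ and $c\leq f$ force $c(ef)=(ef)c=c$, i.e.\ $c\leq ef$.

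The statement is essentially bookkeeping, so I do not anticipate a genuine obstacle; the places most likely to trip one up are the verification of (Ca1)/(Ca2), where Lemma~\ref{lem:+} must be applied exactly as above to reconcile the domain/range conditions with the restricted product, and the check that the semilattice meet $ef$ is the greatest lower bound of $e$ and $f$ among \emph{all} elements of $S$ and not merely among the idempotents of $E$.
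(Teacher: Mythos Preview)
The paper does not actually prove this theorem; it is quoted from \cite[\S5]{lawson1991} without proof, with only a brief remark afterwards identifying the restriction, corestriction, and meet explicitly as $f|a=fa$, $a|f=af$, and $e\wedge f=ef$. Your axiom-by-axiom verification is correct and is precisely the natural way to supply the omitted argument; the computations you give for $S_o=E$, (Ca1)--(Ca3), (Or1)--(Or3) are all sound, and your identification of $a|f$ and $f|a$ matches the paper's remark.

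One small simplification is available for condition (I). Once $c\leq e$ with $e\in E$, equation \eqref{eq:npo2} gives $c=c^+e$; since $c^+,e\in E$ and $E$ is a subsemilattice, this forces $c\in E$ outright. Hence every lower bound of $e$ (and so of $e$ and $f$) in $S$ already lies in $E$, and the meet of $e,f$ in $(S,\leq)$ automatically coincides with their meet $ef$ in the semilattice $E$. This is a slightly cleaner route than your $c(ef)=(ef)c=c$ computation: in your version the final implication ``$c(ef)=(ef)c=c\Rightarrow c\leq ef$'' tacitly uses $c\in E$ anyway (since \eqref{eq:npo} requires the witnessing idempotent to lie in $E$, not merely in $E(S)$), so you may as well record $c\in E$ at the outset.
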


The restriction $f|a$ in the inductive category $(S,\cdot,\leq)$
is simply the product $fa$ in the restriction semigroup $S$, since
$fa\leq a$ and $(fa)^+=(fa^+)^+=f$, by Lemma~\ref{lem:+}, as
$f\leq a^+$. Similarly, $a|f=af$ and $e\wedge f=ef$.

We now define the \emph{pseudoproduct} $\otimes$ in an inductive
category $(C,\cdot,\leq)$ by\footnote{In the interests of reducing
the number of brackets, expressions such as $a|\br(a)\wedge\bd(b)$
will be understood to mean $a|(\br(a)\wedge\bd(b))$; of course,
the alternative, $(a|\br(a))\wedge \bd(b)$, makes no sense if $a$
is not an identity.}
\begin{equation*}
\label{eq:pseudo-cat} a\otimes b=\left[a|{\bf r}(a)\wedge {\bf
d}(b)\right]\cdot\left[{\bf r}(a)\wedge {\bf d}(b)|b\right].
\end{equation*}
The pseudoproduct is everywhere-defined in $C$ (thanks to (I)) and
coincides with the product $\cdot$ in $C$ whenever $\cdot$ is
defined.  To see this, recall that if $\exists a\cdot b$, then
$\br(a)=\bd(b)$, so that $a\otimes
b=\left[a|\br(a)\right]\cdot\left[\bd(b)|b\right]$.  Then
$a\otimes b=a\cdot b$, by Lemma~\ref{lem:a=dab}.

We record the following properties of the pseudoproduct for later use:

\begin{lem}
\label{lem:apseudoe} 
Let $(C,\cdot,\leq)$ be an inductive category and let $a\in C$, $e\in C_o$.  Then
$$e\otimes a=e\wedge {\bf d}(a)|a\quad\text{and}\quad a\otimes e=a|{\bf r}(a)\wedge e.$$
\end{lem}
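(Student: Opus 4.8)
The plan is to apply the definition of the pseudoproduct directly, using the fact that for an identity $e \in C_o$ we have $\bd(e) = \br(e) = e$. Writing out $e \otimes a = \left[e|\br(e) \wedge \bd(a)\right] \cdot \left[\br(e) \wedge \bd(a)|a\right]$ and substituting $\br(e) = e$, the first factor becomes $e|(e \wedge \bd(a))$, which is a corestriction of the identity $e$ to the identity $e \wedge \bd(a)$. By the remark in the excerpt following Definition~\ref{defn:ord-cat} (that for identities the restriction and corestriction coincide, and in that case the element equals both identities), we get $e|(e \wedge \bd(a)) = e \wedge \bd(a)$. The second factor is already $(e \wedge \bd(a))|a$. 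Since $e \wedge \bd(a)$ is a lower bound for $\bd(a) = \br(e \wedge \bd(a))$... wait, more precisely, $\bd$ of the first factor is $e \wedge \bd(a)$ and $\br$ of it equals $e \wedge \bd(a)$ as well, which matches $\bd$ of the second factor $(e \wedge \bd(a))|a$, so the product $\cdot$ is defined. The composite of the identity $e \wedge \bd(a)$ with $(e \wedge \bd(a))|a$ is just $(e \wedge \bd(a))|a$, because the domain of $(e\wedge\bd(a))|a$ is $e\wedge\bd(a)$, and an identity acts trivially on the left (condition on $C_o$ in Subsection~\ref{sub:cat}). This gives $e \otimes a = e \wedge \bd(a)|a$, as required. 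The second identity $a \otimes e = a|\br(a) \wedge e$ follows by the dual argument: the second factor of $a \otimes e$ becomes $(\br(a) \wedge e)|e$, which again collapses to the identity $\br(a) \wedge e$ by the restriction/corestriction remark, and composing $a|(\br(a) \wedge e)$ on the right with this identity leaves it unchanged.

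The key steps, in order, are: (1) substitute $\br(e) = \bd(e) = e$ into the definition of the pseudoproduct; (2) observe that the resulting first (resp.\ second) factor is a (co)restriction of an identity to an identity, hence equals that smaller identity $e \wedge \bd(a)$ (resp.\ $\br(a) \wedge e$); (3) note the remaining product is defined since ranges/domains match; (4) use the defining property of identities in $C_o$ to simplify composition with an identity on one side. Each of these is essentially a direct appeal to a definition or to the short remarks already recorded in the text.

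I do not anticipate a genuine obstacle here — this is a routine unwinding of the definition of $\otimes$ together with the behaviour of restrictions on identities. The only mild subtlety worth stating carefully is why the product $\cdot$ appearing in the pseudoproduct is indeed defined in these two special cases; this is handled by checking $\br$ of the left factor equals $\bd$ of the right factor, both being $e \wedge \bd(a)$ (resp.\ $\br(a) \wedge e$), using (Or3) to compute the range/domain of the corestriction/restriction. Once that is in place, the simplification of ``identity composed with arrow'' is immediate from the definition of $C_o$.
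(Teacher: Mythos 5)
Your argument is correct, and it is essentially the routine unwinding of the definition of $\otimes$ that the paper itself omits (the paper simply notes that the proof in Armstrong's paper for inductive cancellative categories carries over). One small repair: the remark following Definition~\ref{defn:ord-cat} that you invoke applies to identities $e,f$ for which \emph{both} the restriction and the corestriction $e|f$ are defined, which forces $e=f$; it does not directly yield $e|(e\wedge\bd(a))=e\wedge\bd(a)$. The fact you actually need is that the corestriction of an identity $e$ to an identity $f\leq e$ is $f$ itself, and this is immediate from the uniqueness clause of (Or3)(i), since $f\leq e$ and $\br(f)=f$; dually $f|e=f$ by (Or3)(ii). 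With that justification in place, your remaining steps --- the restricted product being defined because the range of the left factor and the domain of the right factor are both $e\wedge\bd(a)$ (respectively $\br(a)\wedge e$), and the identity then being absorbed by the defining property of elements of $C_o$ --- are exactly right.
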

(This result appears in \cite{armstrong1984} for inductive cancellative categories; the proof carries over the present case without modification.)

\begin{thm}\cite[\S 5]{lawson1991}
\label{thm:ic->wEas} If $(C,\cdot,\leq)$ is an inductive category,
then $(C,\otimes)$ is a restriction semigroup with respect to
$C_o$.
\end{thm}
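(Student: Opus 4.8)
The plan is to verify the abstract axioms of Definition~\ref{defn:wlEa} for the algebra $(C,\otimes)$ with distinguished semilattice $C_o$. First I would establish that $\otimes$ is associative, which is by far the main obstacle: one must show $(a\otimes b)\otimes c=a\otimes(b\otimes c)$ for arbitrary $a,b,c\in C$, using only the ordered-category axioms (Or1)--(Or3) and (I). The strategy here is to unwind both sides using the definition of $\otimes$, Lemma~\ref{lem:rest-ord} (to collapse nested restrictions/corestrictions), Lemma~\ref{lem:a=dab}, and Lemma~\ref{lem:apseudoe}; the key auxiliary facts needed are that ${\bf r}(a|f)=f$ and ${\bf d}(f|a)=f$ by construction, that ${\bf d}(a|f)\leq{\bf d}(a)$ and ${\bf r}(f|a)\leq{\bf r}(a)$ by (Or2), and that corestriction/restriction interact well with the genuine product via (Or1). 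One shows that both iterated pseudoproducts equal a common `triple pseudoproduct' obtained by corestricting $a$, doubly restricting-and-corestricting $b$, and restricting $c$ to a suitable meet of objects built from ${\bf r}(a)$, ${\bf d}(b)$, ${\bf r}(b)$, ${\bf d}(c)$; the associativity and commutativity of $\wedge$ in the semilattice $C_o$ then does the rest. I expect this computation to be the substantial part of the proof.

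Once associativity is in hand, the remaining axioms are comparatively routine. For the semilattice $C_o$: given $e,f\in C_o$, Lemma~\ref{lem:apseudoe} gives $e\otimes f=e\wedge{\bf d}(f)|f=e\wedge f|f$, and since for objects the restriction $g|f$ with $g\leq f$ equals $f=g$ (as noted after Definition~\ref{defn:ord-cat}), we get $e\otimes f=e\wedge f\in C_o$; thus $C_o$ is closed under $\otimes$ and $\otimes$ restricted to $C_o$ is the meet, hence $C_o$ is a subsemilattice of $(C,\otimes)$.

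Next I would identify the structure maps. For $a\in C$, I claim ${\bf d}(a)$ serves as $a^+$ and ${\bf r}(a)$ as $a^*$. Indeed ${\bf d}(a)\otimes a={\bf d}(a)\wedge{\bf d}(a)|a={\bf d}(a)|a=a$ by Lemma~\ref{lem:a=dab}, so ${\bf d}(a)$ is a left identity for $a$ under $\otimes$; dually ${\bf r}(a)$ is a right identity. To see $a\,\RTE\,{\bf d}(a)$ where $E=C_o$ and $\RTE$ is computed in $(C,\otimes)$: if $e\in C_o$ satisfies $e\otimes a=a$, then $e\wedge{\bf d}(a)|a=a$ forces ${\bf r}(e\wedge{\bf d}(a)|a)={\bf r}(a)$, but the left side has range $e\wedge{\bf d}(a)$ only if... more directly, $e\wedge{\bf d}(a)|a\leq a$ with equality forcing ${\bf d}(e\wedge{\bf d}(a)|a)={\bf d}(a)$ by Lemma~\ref{lem:a=dab} applied to the inequality, i.e.\ $e\wedge{\bf d}(a)={\bf d}(a)$, i.e.\ ${\bf d}(a)\leq e$; and conversely ${\bf d}(a)\leq e$ gives $e\otimes{\bf d}(a)={\bf d}(a)$ easily; so $a$ and ${\bf d}(a)$ have exactly the same left identities in $C_o$. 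This gives axiom~(1). For axiom~(3), the identity $a\otimes e=a|{\bf r}(a)\wedge e$ from Lemma~\ref{lem:apseudoe} must be massaged into the form $(a\otimes e)^+\otimes a$; since ${\bf d}(a|{\bf r}(a)\wedge e)={\bf d}(a|{\bf r}(a)\wedge e)$ lies below ${\bf d}(a)$ and $(a\otimes e)^+={\bf d}(a\otimes e)$, one checks $(a\otimes e)^+\otimes a=(a\otimes e)^+|a$, and this coincides with $a|{\bf r}(a)\wedge e$ by the uniqueness clauses of (Or3) together with Lemma~\ref{lem:rest-ord}; the dual identity $e\otimes a=a\otimes(e\otimes a)^*$ is symmetric. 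Finally, axiom~(2) --- that $\RTE$ is a left congruence and $\LTE$ a right congruence --- follows because $\RTE$-relatedness is equivalent to equality of ${\bf d}$-values (just shown), and $\otimes$ respects this: from the triple-pseudoproduct analysis, ${\bf d}(c\otimes a)$ depends on $c$ and $a$ only through ${\bf d}(c)$, ${\bf r}(c)$ and ${\bf d}(a)$, hence only through ${\bf d}(c)$, ${\bf r}(c)$ and ${\bf d}(a)$; so if ${\bf d}(a)={\bf d}(b)$ then ${\bf d}(c\otimes a)={\bf d}(c\otimes b)$, giving $c\otimes a\,\RTE\,c\otimes b$. Assembling these verifications completes the proof that $(C,\otimes)$ is a restriction semigroup with respect to $C_o$.
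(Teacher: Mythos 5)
The paper does not prove Theorem~\ref{thm:ic->wEas} at all: it is quoted from Lawson's \S 5 of \cite{lawson1991}, so the only fair comparison is with the standard verification given there, and your plan is indeed that same direct verification of the axioms of Definition~\ref{defn:wlEa}. The routine parts you actually carry out are essentially correct: $e\otimes f=e\wedge f$ on $C_o$ (though the fact you want is just the uniqueness clause of (Or3)(ii) applied to $e\wedge f\leq f$, not the remark after Definition~\ref{defn:ord-cat}, which concerns the case $e=f$); the identification $a^+=\bd(a)$, $a^*=\br(a)$ via the characterisation of left identities of $a$ in $C_o$ as the objects above $\bd(a)$; the ample identities $a\otimes e=(a\otimes e)^+\otimes a$ via Lemma~\ref{lem:apseudoe} and uniqueness in (Or3); and the left-congruence property. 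One slip in the last step: your claim that $\bd(c\otimes a)$ depends on $c$ only through $\bd(c)$ and $\br(c)$ is false, since $\bd(c\otimes a)=\bd(c|\br(c)\wedge\bd(a))$ depends on the corestriction of $c$ itself; but this is harmless, because all the congruence argument needs is that it depends on $a$ only through $\bd(a)$, which is immediate from that formula.

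The genuine gap is associativity, which you correctly identify as the substantial content but then only gesture at. Reducing $(a\otimes b)\otimes c$ and $a\otimes(b\otimes c)$ to a common ``triple product'' is the right idea, but to execute it one must first compute $\br(a\otimes b)=\br(\br(a)\wedge\bd(b)\,|\,b)$ and $\bd(b\otimes c)=\bd(b\,|\,\br(b)\wedge\bd(c))$, and then interchange a restriction with a corestriction of the middle factor: one needs an identity of the shape $(e|b)|f'=e''|(b|f)$, with the four objects suitably matched, together with control of how $\wedge$ interacts with ranges of restrictions and domains of corestrictions. None of this follows from the lemmas you invoke --- Lemma~\ref{lem:rest-ord} treats only nested restrictions (or nested corestrictions) of the same kind, and Lemmas~\ref{lem:a=dab} and~\ref{lem:apseudoe} do not supply the mixed interchange --- so as written the ``common triple pseudoproduct'' is not even well defined, and the core step of the theorem remains unproved. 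This interchange machinery is precisely what Lawson's \S 5 (and Armstrong's treatment in the cancellative case) develops before associativity can be deduced; your write-up needs it spelled out, or the result cited, for the proof to be complete.
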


Let $S$ be a restriction semigroup.  We will denote the inductive
category associated to $S$ by ${\bf C}(S)$.  Similarly, if $C$ is
an inductive category, then we will denote its associated
restriction semigroup by ${\bf S}(C)$.  The following result is
implicit in \cite[\S 5]{lawson1991}:

\begin{thm}
\label{thm:scs-csc} Let $S$ be a restriction semigroup and $C$ be
an inductive category.  Then $\bs(\bc(S))=S$ and $\bc(\bs(C))=C$.
\end{thm}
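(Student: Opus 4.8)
The plan is to unwind the two constructions and check they are mutually inverse at the level of underlying sets, binary operations, and (for the ordered side) partial orders. The statement $\bs(\bc(S))=S$ and $\bc(\bs(C))=C$ should be read as literal equality of structures, so I would organise the proof as a sequence of verifications: same underlying set, same distinguished subsemilattice, same partial order, same operation.

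First I would treat $\bs(\bc(S))=S$. Starting from a restriction semigroup $S$ with semilattice $E$, Theorem~\ref{thm:wEa-ic} gives $\bc(S)=(S,\cdot,\leq)$ with the same underlying set, $S_o=E$, $\bd(x)=x^+$, $\br(x)=x^*$, and $\leq$ the natural partial order. Applying Theorem~\ref{thm:ic->wEas} produces $\bs(\bc(S))=(S,\otimes)$, a restriction semigroup with respect to $E=S_o$; the underlying set and the semilattice $E$ are visibly unchanged. It remains to check $a\otimes b=ab$ for all $a,b\in S$. Here I would use the explicit descriptions recorded after Theorem~\ref{thm:wEa-ic}: in $\bc(S)$, restriction is $f|a=fa$, corestriction is $a|f=af$, and meet of identities is $e\wedge f=ef$. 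Hence $a\otimes b = [a|(a^*\wedge b^+)]\cdot[(a^*\wedge b^+)|b] = (a\,a^*b^+)\cdot(a^*b^+\,b)$; since the first factor has $^*$ equal to $(a^*\cdot a^*b^+)^*=a^*b^+$ and the second has $^+$ equal to $a^*b^+$ (using Lemma~\ref{lem:+}), the restricted product is defined and equals the semigroup product $a\,a^*b^+\,a^*b^+\,b = a\,a^*b^+\,b$. Now $a\,a^*b^+ = ab^+$ (as $a^*$ is a right identity for $a$), and $ab^+\,b = ab$ by condition~(3) of Definition~\ref{defn:wlEa}, since $b^+b=b$; more directly, $a\,a^*b^+\,b$ collapses to $ab$ because $a^*,b^+$ are right/left identities for $a,b$ respectively and idempotents commute. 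So $a\otimes b=ab$, and since $\leq$ on $\bc(S)$ is the natural partial order of $S$, which by \eqref{eq:npo2} is determined by the operation and $E$, the recovered order agrees too. Thus $\bs(\bc(S))=S$ as ordered (2,1,1)-algebras.

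For $\bc(\bs(C))=C$, I would run the analogous chain in the other direction. Given an inductive category $(C,\cdot,\leq)$, Theorem~\ref{thm:ic->wEas} gives $\bs(C)=(C,\otimes)$, a restriction semigroup with respect to $C_o$, and one needs to identify its $^+$ and $^*$: I would argue that $a^+=\bd(a)$ and $a^*=\br(a)$, using Lemma~\ref{lem:apseudoe} to compute $\bd(a)\otimes a = \bd(a)\wedge\bd(a)|a = \bd(a)|a = a$ (the last step by Lemma~\ref{lem:a=dab}) and, dually, $a\otimes\br(a)=a$, together with the fact that $\bd(a),\br(a)\in C_o$ and the uniqueness of $^+,^*$ noted after Definition~\ref{defn:wlEa}; one must also check $a\,\widetilde{\R}_{C_o}\,\bd(a)$, which follows from $e\otimes a=a\iff \bd(a)\leq e$ for $e\in C_o$, again via Lemma~\ref{lem:apseudoe}. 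Then Theorem~\ref{thm:wEa-ic} applied to $\bs(C)$ returns $\bc(\bs(C))$ with the same underlying set, objects $C_o$, domain $\bd$, range $\br$, natural partial order $\leq'$, and restricted product $\cdot'$ defined exactly when $\br(a)=\bd(b)$. Since the pseudoproduct coincides with $\cdot$ whenever $\cdot$ is defined (as recalled just before Lemma~\ref{lem:apseudoe}), we get $\cdot'=\cdot$. Finally I would check that the natural partial order $\leq'$ of $\bs(C)$, namely $a\leq' b\iff a=\bd(a)\otimes b$, coincides with the original $\leq$: the direction $a\leq b\Rightarrow a=\bd(a)|b=\bd(a)\otimes b$ is Lemma~\ref{lem:a=dab}, and conversely $a=\bd(a)\otimes b=\bd(a)\wedge\bd(b)|b\leq b$ by Lemma~\ref{lem:apseudoe} and (Or3)(ii). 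Hence $\bc(\bs(C))=C$.

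The routine identities in the first half are genuinely routine given the explicit formulas for restriction, corestriction and meet in $\bc(S)$. The step I expect to be the main obstacle — or at least the one needing the most care — is the identification $a^+=\bd(a)$, $a^*=\br(a)$ in $\bs(C)$ and, relatedly, showing that the natural partial order of $\bs(C)$ reproduces the original order $\leq$ of $C$ rather than merely refining it; this is exactly the place where one must invoke Lemmas~\ref{lem:a=dab} and~\ref{lem:apseudoe} and the uniqueness clauses of Definitions~\ref{defn:wlEa} and~\ref{defn:ord-cat}, and it is what makes the equality (not just isomorphism) in $\bc(\bs(C))=C$ hold on the nose.
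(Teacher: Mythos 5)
Your proposal is correct, but it is worth noting that the paper itself gives no proof of Theorem~\ref{thm:scs-csc} at all: it merely remarks that the result is ``implicit in'' \S 5 of Lawson's 1991 paper, where the object correspondence between restriction semigroups and inductive categories is established. What you have written is precisely the verification the paper leaves to the reader, and it goes through: in the first half the explicit formulas $f|a=fa$, $a|f=af$, $e\wedge f=ef$ recorded after Theorem~\ref{thm:wEa-ic} collapse the pseudoproduct to $a\otimes b=a\,a^*b^+\,b=ab$ exactly as you compute (your appeal to ``condition~(3)'' for $b^+b=b$ is a slip --- this is just the fact, noted after Definition~\ref{defn:wlEa}, that $b^+$ is a left identity for $b$ --- but your alternative justification is the right one); in the second half the identification $a^+=\bd(a)$, $a^*=\br(a)$ via Lemmas~\ref{lem:apseudoe} and~\ref{lem:a=dab} and the uniqueness of $^+,^*$, the observation that $\cdot'$ is defined exactly when $\br(a)=\bd(b)$ and then agrees with $\cdot$ because $\otimes$ extends $\cdot$, and the two-way check that the natural partial order of $\bs(C)$ reproduces $\leq$ (using (Or2) and the uniqueness clause of (Or3)) are exactly the points that need care, and you handle them correctly. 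So your argument does not diverge from the paper's approach so much as supply the omitted details of the standard unwinding due to Lawson; as a self-contained proof it is complete, modulo stating explicitly the dual half of the $\LT_{C_o}$-computation, which is genuinely dual as you indicate.
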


Theorems~\ref{thm:wEa-ic}, \ref{thm:ic->wEas} and~\ref{thm:scs-csc} prove the equivalence of the objects of the categories in the upcoming Theorems~\ref{thm:wEa-join}, \ref{thm:wEajoin-indcat} and \ref{thm:wEajoin-indcat-strong}.  It only remains for us to deal with the arrows.

We conclude this subsection with a property of ordered categories which will be immensely useful later in the paper.

\begin{lem}
\label{lem:so-useful} 
Let $(C,\cdot,\leq)$ be an ordered category
and let $a,b\in C$.  If $a,b\leq c$, for some $c\in C$, and either
$\bd(a)=\bd(b)$ or $\br(a)=\br(b)$, then $a=b$.
\end{lem}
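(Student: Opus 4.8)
The plan is to use Lemma~\ref{lem:a=dab} to rewrite both $a$ and $b$ in terms of restrictions or corestrictions of the common upper bound $c$, and then invoke the uniqueness clauses in axiom (Or3).

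First I would treat the case $\bd(a)=\bd(b)$. Since $a\leq c$, Lemma~\ref{lem:a=dab} gives $a=\bd(a)|c$, that is, $a$ is the restriction of $c$ to the identity $\bd(a)$. Likewise $b\leq c$ gives $b=\bd(b)|c$. But $\bd(a)=\bd(b)$, so the two expressions $\bd(a)|c$ and $\bd(b)|c$ are literally the same element, whence $a=b$. (Strictly, one should note that the hypothesis $a\leq c$ together with (Or2) forces $\bd(a)\leq\bd(c)$, so the restriction $\bd(a)|c$ is indeed defined; this is exactly what makes Lemma~\ref{lem:a=dab} applicable.)

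The case $\br(a)=\br(b)$ is dual: from $a\leq c$ and Lemma~\ref{lem:a=dab} we get $a=c|\br(a)$, the corestriction of $c$ to $\br(a)$, and similarly $b=c|\br(b)$; since $\br(a)=\br(b)$ these coincide, so $a=b$. Alternatively one can appeal directly to the uniqueness in (Or3)(i): both $a$ and $b$ are elements $\leq c$ with range $\br(a)=\br(b)$, and (Or3)(i) says such an element is unique, so $a=b$ — and the analogous appeal to (Or3)(ii) handles the domain case. Either route is a two-line argument.

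I do not expect any real obstacle here; the only point requiring a moment's care is making sure the relevant restriction/corestriction is defined before quoting Lemma~\ref{lem:a=dab} or (Or3), which is immediate from (Or2). The lemma is essentially a packaging of the uniqueness built into the ordered-category axioms, so the proof is short and the content is entirely in having set up Lemma~\ref{lem:a=dab} correctly.
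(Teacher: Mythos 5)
Your argument is correct. The paper itself supplies no proof of Lemma~\ref{lem:so-useful} (it merely cites Armstrong's paper on inductive cancellative categories and notes that the proof carries over), and your direct appeal to the uniqueness clauses of (Or3), with (Or2) guaranteeing $\br(a)\leq\br(c)$, respectively $\bd(a)\leq\bd(c)$, so that the relevant restriction or corestriction of $c$ is defined, is exactly the intended two-line argument; the route through Lemma~\ref{lem:a=dab} is the same argument packaged once and for all. One small caution: Lemma~\ref{lem:a=dab} as printed reads $a=\bd(a)|b=a|\br(b)$, and the second expression is only defined when $\br(a)=\br(b)$ --- it is evidently a slip for $b|\br(a)$; your range case quotes it in the corrected form $a=c|\br(a)$, and in any event your alternative appeal directly to the uniqueness in (Or3)(i)/(ii) bypasses the issue entirely.
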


(This result appears in \cite{armstrong1984} for inductive cancellative categories; the proof carries over the present case without modification.)

\subsection{The Szendrei expansion of an inductive groupoid}
\label{sec:sz}

We take this opportunity to introduce an algebraic tool which we will require briefly in Section~\ref{sec:inv}: the \emph{Szendrei expansion} of an inductive groupoid, as defined by Gilbert~\cite{gilbert2005}.

The concept of a `semigroup expansion' was first introduced by
Birget and Rhodes in \cite{br1984}, and is simply a special type
of functor from one category of semigroups to another.  Amongst
the various expansions in the literature are a number of different
versions of the so-called \emph{Szendrei expansion}.  The original
Szendrei expansion was introduced in \cite{szendrei1989} and was
there applied to groups; it was subsequently extended to monoids
in \cite{fg1990}.

Gilbert~\cite{gilbert2005} has studied the partial actions of
inductive groupoids as a way of informing the study of partial
actions of inverse semigroups.  In the course of this work, he has defined a Szendrei
expansion for inductive groupoids.  Let $G$ be an inductive
groupoid.  For each identity $e\in G_o$, we define the set
$\text{star}_e(G)$ by
\begin{equation*}
\text{star}_e(G)=\{g\in G:\bd(g)=e\}.
\end{equation*}
Let $\mathscr{F}_e(\text{star}_e(G))$ be the collection of all
finite subsets of $\text{star}_e(G)$ which contain $e$, and put
$$\mathscr{F}_*(G)=\bigcup_{e\in G_o}\mathscr{F}_e(\text{star}_e(G)).$$
We now make the following definition:
\begin{defn}\cite[p.~179]{gilbert2005}
\label{defn:grpd-exp} Let $G$ be an inductive groupoid.  The
\emph{Szendrei expansion}\footnote{Gilbert refers to this as the
\emph{Birget-Rhodes expansion} but we adopt the term
\emph{Szendrei expansion} for consistency with previous
definitions.} of $G$ is the set
$$\szgg=\{(U,u)\in\mathscr{F}_*(G)\times G:u\in U\},$$
together with the operation
$$(U,u)(V,v)=
\begin{cases}
(U,uv)                      &\text{if }\br(u)=\bd(v)\text{ and }U=uV; \\
\text{undefined}    &\text{otherwise}.
\end{cases}$$
\end{defn}

We note that $\szgg$ has identities
$$\szgg_o=\{(E,e)\in\szgg:e\in G_o\}.$$
We note also that Gilbert's definition applies, more generally, to
\emph{ordered} group\-oids, but we will only concern ourselves
with the inductive case.

\begin{prop}\textup{\cite[Proposition~3.1 \& Corollary~3.3]{gilbert2005}}
\label{prop:sz}
If $G$ is an inductive groupoid, then $\szgg$ is an inductive
groupoid with $(U,u)^{-1}=(u^{-1}U,u^{-1})$,
\begin{equation}
\label{eq:dri} \bd\left((U,u)\right)=(U,\bd(u)),\qquad
\br\left((U,u)\right)=(u^{-1}U,\br(u)),
\end{equation}
and ordering
\begin{equation*}
(U,u)\leq (V,v)\Longleftrightarrow u\leq v\text{ in }G\text{ and
}\bd(u)|V\subseteq U,
\end{equation*}
where $\bd(u)|V=\{\bd(u)|w:w\in V\}$.  For $(E,e)\in\szgg_o$ with
$(E,e)\leq\bd((A,a))$, the restriction\footnote{We need not give the corestriction explicitly, thanks to the comments following Lemma~\ref{lem:rest-ord}.} is given by
\begin{equation*}
\label{eq:rest} (E,e)|(A,a)=(E,e|u),
\end{equation*}
whilst, for any $(E,e),(F,f)\in\szgg_o$, we have
\begin{equation}
\label{eq:meet} (E,e)\wedge (F,f)=\left((e\wedge f)|(E\cup
F),e\wedge f\right).
\end{equation}
\end{prop}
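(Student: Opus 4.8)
The plan is to verify each of the listed formulae for $\szgg$ directly against the definitions established in Section~\ref{sub:cat} and in Definition~\ref{defn:grpd-exp}, using the known inductive-groupoid structure of $G$. The statement combines several assertions: that $\szgg$ is a groupoid with the stated inverse, the domain/range formulae, the ordering, the restriction formula, and the meet formula. Since Gilbert's \cite[Proposition~3.1 \& Corollary~3.3]{gilbert2005} is being quoted, the task is really to confirm that these formulae are consistent with our (left-to-right) conventions for categories and with the definitions we have set up, rather than to re-derive everything from scratch.

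First I would check the groupoid axioms (Ca1)--(Ca3) and (G). For (Ca3), given $(U,u)\in\szgg$, one needs identities $\bd((U,u)),\br((U,u))\in\szgg_o$ composable on the appropriate side; the candidates are $(U,\bd(u))$ and $(u^{-1}U,\br(u))$. Using the product rule, $(U,\bd(u))(U,u)$ is defined iff $\br(\bd(u))=\bd(u)=\bd(u)$ (automatic) and $U=\bd(u)U$, which holds since $\bd(u)$ is a left identity for every element of $\mathrm{star}_{\bd(u)}(G)\ni u$ and $\bd(u)\in U$; the product is then $(U,\bd(u)u)=(U,u)$. Similarly $(U,u)(u^{-1}U,\br(u))$: here we need $\br(u)=\bd(\br(u))$ (automatic) and $U=u(u^{-1}U)$, which holds in the groupoid $G$ since $uu^{-1}=\bd(u)$ acts as the identity on $U$. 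For (G), one checks $(U,u)(u^{-1}U,u^{-1})=(U,\bd(u))=\bd((U,u))$ and $(u^{-1}U,u^{-1})(U,u)=(u^{-1}U,\br(u))=\br((U,u))$, using that $u^{-1}(u^{-1}U)^{-1}=uu^{-1}U$ etc. collapses correctly. Associativity (Ca1)--(Ca2) is the routine bookkeeping that the first coordinate is determined by the second together with the ``left-translate'' condition, and composability chains up correctly; I would not grind through it.

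Next I would verify the order-theoretic claims. That $\leq$ as defined is a partial order on $\szgg$ follows from $\leq$ being one on $G$ together with the fact that $\bd(u)|V\subseteq U$ combined with $\bd(v)|U\subseteq V$ and $u\le v$ forces $U=V$ (antisymmetry) --- here one uses $\bd(u)=\bd(v)$ when $u\le v$ would give equality only up to the idempotent-restriction, so care is needed; in fact $u\le v$ in an ordered groupoid does \emph{not} force $\bd(u)=\bd(v)$, so antisymmetry needs $\bd(u)|V\subseteq U\subseteq\ldots$ chased carefully. For (Or1)--(Or3) I would check order-compatibility of the product, then that $\bd$ and $\br$ are order-preserving (immediate from the formulae and (Or2) in $G$), and finally that for $(E,e)\le\bd((A,a))=(A,\bd(a))$ the element $(E,e|a)$ indeed satisfies $(E,e|a)\le(A,a)$ and has domain $(E,e)$: the domain condition is $\bd(e|a)=e$, true by (Or3)(ii) in $G$; the order condition requires $e|a\le a$ (true) and $\bd(e|a)|A=e|A\subseteq E$, which is the content of the assumption $(E,e)\le(A,\bd(a))$ unwound. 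Uniqueness follows from Lemma~\ref{lem:so-useful} applied in $\szgg$ once we know $\szgg$ is an ordered category. The meet formula \eqref{eq:meet} is then checked by confirming $((e\wedge f)|(E\cup F),e\wedge f)$ is a lower bound of both $(E,e)$ and $(F,f)$ in $\szgg_o$ and is above any common lower bound; the lower-bound part uses $e\wedge f\le e$ and $(e\wedge f)|(E\cup F)\supseteq$ nothing --- rather one needs $\bd(e\wedge f)|E=(e\wedge f)|E\subseteq (e\wedge f)|(E\cup F)$, which is clear, and for the greatest-lower-bound part one compares first coordinates set-theoretically. Condition (I) for $\szgg$ is precisely the existence of this meet, and it lands in $\szgg_o$ because $e\wedge f\in G_o$.

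The main obstacle I expect is not any single deep step but the interaction between the two ``coordinates'' of $(U,u)$ under the order and under restriction: specifically, checking antisymmetry of $\leq$ and verifying that the restriction formula $(E,e)|(A,a)=(E,e|a)$ genuinely has the required domain and order relation, since the first-coordinate condition $\bd(u)|V\subseteq U$ must be manipulated under corestriction/restriction in $G$ and one must keep straight which of $^+/^*$-analogues (here $\bd$ vs $\br$) is in play given our domain/range switch relative to \cite{lawson1998,gilbert2005}. Once the ordered-category axioms are in hand, Lemma~\ref{lem:so-useful} disposes of all the uniqueness clauses uniformly, so the bulk of the genuine work is confined to (Or3) and the antisymmetry check.
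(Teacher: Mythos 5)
There is nothing in the paper to compare your argument against line by line: Proposition~\ref{prop:sz} is quoted, with conventions adjusted, from Gilbert \cite{gilbert2005} (his Proposition~3.1 and Corollary~3.3), and the paper supplies no proof of its own. Your direct verification against Definition~\ref{defn:grpd-exp} is therefore a reasonable substitute, and the checks you actually carry out are sound: the computations for $\bd((U,u))$, $\br((U,u))$ and $(U,u)^{-1}$, the restriction formula (where you silently and correctly read the paper's typo $(E,e|u)$ as $(E,e|a)$), and the two halves of the meet formula \eqref{eq:meet} (for the greatest-lower-bound half you should say explicitly that $d|\bigl((e\wedge f)|w\bigr)=d|w$ by Lemma~\ref{lem:rest-ord}). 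Also note that (Or2) for ranges is not ``immediate from the formulae'': one must show $\br(u)|(v^{-1}V)\subseteq u^{-1}U$, which needs the small computation $u^{-1}\cdot(\bd(u)|w)=\br(u)|(v^{-1}w)$ (via (Or1) and uniqueness of restrictions in $G$); it is routine, but it is a genuine step, not a formality.

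One step as you describe it is circular. You propose to dispose of ``all the uniqueness clauses'' by applying Lemma~\ref{lem:so-useful} in $\szgg$ ``once we know $\szgg$ is an ordered category'', but the uniqueness of restrictions and corestrictions is part of axiom (Or3), i.e.\ part of what must be established \emph{before} $\szgg$ qualifies as an ordered category, and Lemma~\ref{lem:so-useful} (like Lemma~\ref{lem:a=dab}) is itself a consequence of that uniqueness axiom. The repair is immediate and should replace the appeal to the lemma: if $(B,b)\leq(A,a)$ with $\bd((B,b))=(B,\bd(b))=(E,e)$, then $B=E$, $b\leq a$ and $\bd(b)=e$, so $b=e|a$ by uniqueness of restrictions in $G$, forcing $(B,b)=(E,e|a)$; uniqueness in $\szgg$ thus reduces coordinatewise to uniqueness in $G$ (and the corestriction case follows from this together with (G), as the paper's footnote indicates). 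Relatedly, your worry about antisymmetry is moot: with both $(U,u)\leq(V,v)$ and $(V,v)\leq(U,u)$ you get $u=v$ outright, whence $\bd(u)=\bd(v)$ and, by Lemma~\ref{lem:a=dab}, $\bd(u)|V=V$ and $\bd(v)|U=U$, giving $U=V$ at once. With these points tightened, your verification goes through and is consistent with the cited source.
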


The pseudoproduct in $\szgg$ may be written:
\begin{equation}
\label{eq:pseudo-sz}
(U,u)\otimes (V,v)=\left(\left.\bd(u|\br(u)\wedge\bd(v))\right|U\cup u\otimes V,u\otimes v\right),
\end{equation}
where $u\otimes V=\{u\otimes w:w\in V\}$ \cite[Theorem~3.2]{gilbert2005}.  Note also that we can inject any inductive groupoid $G$ into $\szgg$ via the mapping $\iota:G\rightarrow\szgg$, given by $g\iota=(\{\bd(g),g\},g)$.  The Szendrei expansion of an inductive groupoid will be used to prove Lemma~\ref{lem:icp5}.

\section{$(\vee,r)$-premorphisms}
\label{sec:join}

The goal of this section is the proof of the following theorem,
and its connection with the `$(\vee,i)$-premorphisms' part of
Theorem~\ref{esn}.

\begin{thm}
\label{thm:wEa-join} The category of restriction semigroups and
$(\vee,r)$-premorph\-isms is isomorphic to the category of
inductive categories and ordered functors.
\end{thm}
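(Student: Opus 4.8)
The plan is to establish the isomorphism by first disposing of the objects and then concentrating on the arrows. The correspondence on objects is already in hand: by Theorems~\ref{thm:wEa-ic}, \ref{thm:ic->wEas} and~\ref{thm:scs-csc}, the assignments $S\mapsto\bc(S)$ and $C\mapsto\bs(C)$ are mutually inverse bijections between the object classes of the two categories. For the arrows, I would show that \emph{the identity map on underlying functions} carries $(\vee,r)$-premorphisms to ordered functors and vice versa. Concretely: (i) if $F\colon C\to D$ is an ordered functor between inductive categories, then the same function, viewed as a map $\bs(C)\to\bs(D)$, is a $(\vee,r)$-premorphism; and (ii) if $\theta\colon S\to T$ is a $(\vee,r)$-premorphism, then the same function, viewed as a map $\bc(S)\to\bc(T)$, is an ordered functor. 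Granting (i) and (ii), the constructions are patently mutually inverse --- as maps of sets they are literally the identity, and $\bs(\bc(S))=S$, $\bc(\bs(C))=C$ by Theorem~\ref{thm:scs-csc} --- they strictly respect composition and identities, and hence assemble into an isomorphism of categories.

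For (ii), let $\theta\colon S\to T$ be a $(\vee,r)$-premorphism, so that $\theta$ respects $^+$ and $^*$, is order-preserving, and satisfies $(st)\theta\le(s\theta)(t\theta)$. Viewing $\theta$ on the inductive categories $\bc(S)$, $\bc(T)$, where $S_o=E_S$, $\bd(a)=a^+$, $\br(a)=a^*$ and dually for $T$, first note that $\theta$ maps $S_o$ into $T_o$, since for $e\in E_S$ we have $e\theta=e^+\theta=(e\theta)^+\in E_T$; and $\theta$ commutes with $\bd$ and $\br$ because it respects $^+$ and $^*$. It remains to check that $\theta$ respects the restricted product. If $\exists\,a\cdot b$ in $\bc(S)$, i.e.\ $a^*=b^+$, then $(a\theta)^*=a^*\theta=b^+\theta=(b\theta)^+$, so $\exists\,(a\theta)\cdot(b\theta)$ in $\bc(T)$; and I must verify $(ab)\theta=(a\theta)(b\theta)$. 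We already have $(ab)\theta\le(a\theta)(b\theta)$. Using Lemma~\ref{lem:+} and $a^*=b^+$ one computes $(ab)^+=(ab^+)^+=(aa^*)^+=a^+$, whence $\bd\big((ab)\theta\big)=\big((ab)^+\big)\theta=(a^+)\theta=(a\theta)^+=\bd\big((a\theta)(b\theta)\big)$, the last equality holding because $(a\theta)(b\theta)$ is a restricted product. Thus $(ab)\theta$ and $(a\theta)(b\theta)$ both lie below the common element $(a\theta)(b\theta)$ and share a domain, so they coincide by Lemma~\ref{lem:so-useful}. Hence $\theta$ is a functor, and being order-preserving it is an ordered functor.

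For (i), let $F\colon C\to D$ be an ordered functor. As a map $\bs(C)\to\bs(D)$ it respects $^+$ and $^*$, since these are $\bd$ and $\br$ and $F$ is a functor. For the premorphism inequality I would unpack the pseudoproduct. Given $a,b\in C$, set $g=\br(a)\wedge\bd(b)\in C_o$, which exists by (I), so that $a\otimes b=(a|g)\cdot(g|b)$. Applying $F$ and using that ordered functors respect restrictions and corestrictions (as noted following Definition~\ref{defn:ord-cat}), $(a\otimes b)F=(aF|gF)\cdot(gF|bF)$. Since $F$ preserves order and $\bd,\br$, we get $gF\le\br(aF)$ and $gF\le\bd(bF)$, hence $gF\le h:=\br(aF)\wedge\bd(bF)\in D_o$, the meet existing by (I) in $D$. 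By Lemma~\ref{lem:rest-ord}, $aF|gF=(aF|h)|gF\le aF|h$, and dually $gF|bF\le h|bF$; applying (Or1) to these two inequalities gives $(a\otimes b)F=(aF|gF)\cdot(gF|bF)\le(aF|h)\cdot(h|bF)=aF\otimes bF$. Thus $F$, as a map of restriction semigroups, satisfies $(st)\theta\le(s\theta)(t\theta)$; being also order-preserving and $^+,^*$-respecting, it is a $(\vee,r)$-premorphism.

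I expect the crux to be the equality-upgrade in step~(ii): turning the inequality $(ab)\theta\le(a\theta)(b\theta)$ into an equality on composable pairs. The mechanism is Lemma~\ref{lem:so-useful} together with the domain computation via Lemma~\ref{lem:+}. The analogous subtlety in step~(i) is that an ordered functor need \emph{not} preserve $\wedge$ --- this is exactly what distinguishes ordered from inductive functors --- so the pseudoproduct is sent only to an element \emph{above} the naively expected one, which is precisely why the correct condition on the semigroup side is the $(\vee)$-inequality rather than an equality. Everything else is routine: mutual inverseness is immediate because on underlying functions both directions are the identity, and preservation of composition and identities follows since composition of $(\vee,r)$-premorphisms is ordinary function composition (closure under which uses order-preservation of the second factor), as is composition of ordered functors, and $\bc,\bs$ are already known to be mutually inverse on objects.
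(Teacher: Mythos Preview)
Your proposal is correct and follows essentially the same architecture as the paper's proof: the objects are handled by Theorems~\ref{thm:wEa-ic}--\ref{thm:scs-csc}, and the arrows by two propositions (your (i) and (ii)) showing that the identity map on underlying functions interchanges $(\vee,r)$-premorphisms and ordered functors. The only technical variations are cosmetic: in (ii) you compare \emph{domains} via Lemma~\ref{lem:so-useful} where the paper compares \emph{ranges}, and in (i) you argue explicitly with (Or1) and Lemma~\ref{lem:rest-ord}, whereas the paper writes $s\otimes t=(s\otimes e)\cdot(e\otimes t)$ with $e=\br(s)\wedge\bd(t)$ and appeals directly to $s\otimes e\le s$, $e\otimes t\le t$ together with order-compatibility of $\otimes$.
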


We note that the `objects' part of this theorem has been taken care of by the results of Section~\ref{sec:prelims}; it remains to deal with the `arrows' part.  We begin by defining the notion of a \emph{$(\vee,r)$-premorphism}.  

\begin{defn}
Let $S$ and $T$ be restriction semigroups.  A
\emph{$(\vee,r)$-premorph\-ism} is a function $\theta:S\rightarrow
T$ such that
\begin{enumerate}
    \item[($\vee 1$)] $(st)\theta\leq (s\theta)(t\theta)$;
    \item[($\vee 2$)] $s^+\theta\leq (s\theta)^+$ and $s^*\theta\leq (s\theta)^*$.
\end{enumerate}
\end{defn}

The notion of a $(\vee,r)$-premorphism generalises that of a
$(\vee,i)$-premorphism.  To see this, we must first record the
following concerning $(\vee,i)$-premorphisms:

\begin{lem}\cite[Theorem~3.1.5]{lawson1998}
\label{lem:veei}
Let $\theta:S\rightarrow T$ be a
$(\vee,i)$-premorphism.  Then $\theta$ respects inverses and the
natural partial order.
\end{lem}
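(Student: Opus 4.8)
The plan is to prove the two assertions in turn, establishing first that $\theta$ respects inverses and then deducing order-preservation cheaply from it. Throughout I will use the standard facts about the natural partial order $\leq$ on an inverse semigroup: it is compatible with multiplication, $xe\leq x$ for every idempotent $e$, and $x\leq y$ if and only if $x=xx^{-1}y$, if and only if $x=yx^{-1}x$ (these being the specialisations of \eqref{eq:npo2} with $x^+=xx^{-1}$ and $x^*=x^{-1}x$).

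For the inverse part, fix $s\in S$ and write $a=s\theta$, $b=s^{-1}\theta$ in $T$. Applying $(\vee 1)$ to the factorisation $s=(ss^{-1})\cdot s$ gives $a=(ss^{-1}s)\theta\leq\bigl((ss^{-1})\theta\bigr)(s\theta)$. Applying $(\vee 1)$ again to $ss^{-1}=s\cdot s^{-1}$ gives $(ss^{-1})\theta\leq(s\theta)(s^{-1}\theta)=ab$, so right-multiplying by $a$ and using compatibility yields $a\leq\bigl((ss^{-1})\theta\bigr)a\leq(ab)a=aba$. The key observation is that in an inverse semigroup the inequality $a\leq aba$ forces the equality $a=aba$, since $a\leq aba$ means $a=aa^{-1}(aba)=(aa^{-1}a)ba=aba$. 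Running the same argument on the factorisation $s^{-1}=s^{-1}ss^{-1}$ gives $b=bab$. Thus $b$ is an inverse of $a$, and since inverses are unique in the inverse semigroup $T$ we conclude $b=a^{-1}$, i.e. $(s\theta)^{-1}=s^{-1}\theta$.

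Order-preservation is now immediate. Suppose $s\leq t$ in $S$; then $s=ts^{-1}s$, so $(\vee 1)$ gives $s\theta\leq(t\theta)\bigl((s^{-1}s)\theta\bigr)$. A further application of $(\vee 1)$ together with the inverse part just proved gives $(s^{-1}s)\theta\leq(s^{-1}\theta)(s\theta)=(s\theta)^{-1}(s\theta)$, so by compatibility $s\theta\leq(t\theta)(s\theta)^{-1}(s\theta)$; and since $(s\theta)^{-1}(s\theta)$ is idempotent the right-hand side is $\leq t\theta$, whence $s\theta\leq t\theta$.

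The only step that is not a routine manipulation with the defining inverse-semigroup identities is spotting that $a\leq aba$ implies $a=aba$; once this is in hand the rest is short. One could instead begin by showing that $\theta$ carries idempotents to idempotents, via the fact that $x\leq x^{2}$ implies $x=x^{2}$ in an inverse semigroup, but the route above avoids needing this intermediate step.
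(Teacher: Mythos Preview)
The paper does not supply its own proof of this lemma: it is quoted without argument from \cite[Theorem~3.1.5]{lawson1998}.  Your proof is correct and self-contained.  The argument for inverses is clean --- the reduction of $a\leq aba$ to $a=aba$ via $a=aa^{-1}(aba)=(aa^{-1}a)ba=aba$ is exactly the right trick --- and the order-preservation step follows neatly from it.  Lawson's own proof (which the paper defers to) proceeds a little differently: he first shows that $\theta$ maps idempotents to idempotents (via $e\theta\leq(e\theta)^2\Rightarrow e\theta=(e\theta)^2$), then uses this to get order-preservation, and only then deduces inverse-preservation; your route inverts that order of business and avoids the idempotent step entirely, which is slightly more economical.
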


We now have:

\begin{lem}
\label{lem:veei-veew} 
Let $\theta:S\rightarrow T$ be a function
between inverse semigroups.  Then $\theta$ is a
$(\vee,i)$-premorphism if, and only if, it is a
$(\vee,r)$-premorphism.
\end{lem}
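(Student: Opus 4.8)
The plan is to prove both implications by comparing the two definitions condition by condition, using the fact (recorded above) that in an inverse semigroup $a^+=aa^{-1}$ and $a^*=a^{-1}a$. Condition $(\vee 1)$ is literally common to the definitions of a $(\vee,i)$-premorphism and a $(\vee,r)$-premorphism, so the only work is to show that, \emph{given} $(\vee 1)$, the single condition $(\wedge 2)'$-style requirement ``$(s\theta)^{-1}=s^{-1}\theta$'' in Definition~\ref{defn:join-i} is equivalent to condition $(\vee 2)$, ``$s^+\theta\leq (s\theta)^+$ and $s^*\theta\leq (s\theta)^*$''. In fact Definition~\ref{defn:join-i} only lists $(\vee 1)$, so I should first recall that, by Lemma~\ref{lem:veei}, a $(\vee,i)$-premorphism automatically respects inverses; thus for the purposes of this lemma a $(\vee,i)$-premorphism is a function satisfying $(\vee 1)$ together with $(s\theta)^{-1}=s^{-1}\theta$, and it suffices to show this package is equivalent to $(\vee 1)$ plus $(\vee 2)$.

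For the forward direction I would assume $\theta$ is a $(\vee,i)$-premorphism. Then $(\vee 1)$ holds by definition, and $\theta$ respects inverses by Lemma~\ref{lem:veei}. To get $(\vee 2)$, compute $s^+\theta=(ss^{-1})\theta\leq(s\theta)(s^{-1}\theta)=(s\theta)(s\theta)^{-1}=(s\theta)^+$, using $(\vee 1)$ and the fact that $\theta$ respects inverses; the computation for $s^*\theta\leq(s\theta)^*$ is the mirror image. So $\theta$ is a $(\vee,r)$-premorphism.

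For the converse I would assume $\theta$ satisfies $(\vee 1)$ and $(\vee 2)$ and show $(s\theta)^{-1}=s^{-1}\theta$, after which $\theta$ is a $(\vee,i)$-premorphism by definition. Here the natural move is to show that $s^{-1}\theta$ is an inverse of $s\theta$ in the inverse semigroup $T$; since inverses are unique, this gives the claim. One first shows $(s\theta)(s^{-1}\theta)(s\theta)\leq s\theta$ and $(s^{-1}\theta)(s\theta)(s^{-1}\theta)\leq s^{-1}\theta$ using $(\vee 1)$ applied to $ss^{-1}s=s$ and $s^{-1}ss^{-1}=s^{-1}$; then one needs to upgrade these inequalities to equalities, which is where $(\vee 2)$ does its work. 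Concretely, $(s\theta)(s^{-1}\theta)\geq\big((s\theta)^{-1}\big)$-type manipulations: from $(\vee 1)$, $s\theta=(ss^{-1}s)\theta\leq(s\theta)(s^{-1}\theta)(s\theta)$ would follow if we knew $(\vee 1)$ ran the other way, which it does not, so instead I expect to argue that $(s\theta)(s^{-1}\theta)\geq s^+\theta$? No --- rather, $(\vee 2)$ gives $s^+\theta\leq(s\theta)^+=(s\theta)(s\theta)^{-1}$, and combined with $s^+\theta\leq(s\theta)(s^{-1}\theta)$ (wait, that is also backwards) --- the cleanest route is: let $t=s^{-1}\theta$ and $u=s\theta$; from $(\vee 1)$ we get $u=(ss^{-1}s)\theta\leq(s\theta)(s^{-1}\theta)(s\theta)$ is false in general, but $utu\leq\big(s(s^{-1}s)\big)\theta\cdot$-expansions do hold one way. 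The honest statement is that with $\leq$ going the convenient direction via $(\vee 1)$, one gets $utu\leq u$ and $tut\leq t$, and then I would show $ut\leq u t$ combined with $ut\geq$ something forces $ut=uu^{-1}$: indeed $ut=(s\theta)(s^{-1}\theta)\leq(ss^{-1})\theta\cdot$ --- no, $(\vee 1)$ gives $(ss^{-1})\theta\leq(s\theta)(s^{-1}\theta)=ut$, so $s^+\theta\leq ut$; and $(\vee 2)$ gives $s^+\theta\leq(s\theta)^+=uu^{-1}$. Since $ut$ is below $u$ in the sense $utu\leq u$... The main obstacle is precisely assembling these one-directional inequalities into the equality $ut=uu^{-1}$ (equivalently showing $s^{-1}\theta$ is a genuine inverse of $s\theta$); I expect this to hinge on the identity $(\vee 1)$ applied to well-chosen products together with $(\vee 2)$ pinning down $(s\theta)^+$ and $(s\theta)^*$, plus the observation that if $x\leq y$ and $x^+=y^+$ (or $x^*=y^*$) in an inverse semigroup then $x=y$. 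Once $(s\theta)(s^{-1}\theta)=(s\theta)(s\theta)^{-1}$ and the mirror identity $(s^{-1}\theta)(s\theta)=(s\theta)^{-1}(s\theta)$ are established, uniqueness of inverses in $T$ yields $s^{-1}\theta=(s\theta)^{-1}$, completing the proof.
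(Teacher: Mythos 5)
Your forward direction is exactly the paper's argument: by Lemma~\ref{lem:veei} a $(\vee,i)$-premorphism respects inverses, and then $s^+\theta=(ss^{-1})\theta\leq(s\theta)(s^{-1}\theta)=(s\theta)(s\theta)^{-1}=(s\theta)^+$, with the dual computation for $^*$. That half is fine.

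The converse, however, contains a genuine gap, and it is one of your own making. Definition~\ref{defn:join-i} asks \emph{only} for condition $(\vee 1)$; the identity $(s\theta)^{-1}=s^{-1}\theta$ is not part of the definition of a $(\vee,i)$-premorphism (it belongs to Definition~\ref{defn:wedge-i} as $(\wedge 2)'$, and for $(\vee,i)$-premorphisms it is a \emph{consequence}, namely Lemma~\ref{lem:veei}). Hence the implication ``$(\vee,r)$-premorphism $\Rightarrow$ $(\vee,i)$-premorphism'' is immediate: a $(\vee,r)$-premorphism satisfies $(\vee 1)$, and that is the entire definition. By folding inverse-preservation into what you take a $(\vee,i)$-premorphism to be, you manufactured a nontrivial obligation --- to show directly from $(\vee 1)$ and $(\vee 2)$ that $s^{-1}\theta$ is the inverse of $s\theta$ --- and your attempt at it does not close: you obtain only one-directional inequalities such as $s^+\theta\leq(s\theta)(s^{-1}\theta)$ and $s^+\theta\leq(s\theta)^+$, and you explicitly leave the assembly of these into the equality $(s\theta)(s^{-1}\theta)=(s\theta)(s\theta)^{-1}$ as an unproved ``main obstacle.'' As written, the converse is therefore not proved. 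The repair is short: either simply observe that $(\vee 1)$ alone makes $\theta$ a $(\vee,i)$-premorphism (the paper's ``Immediate''), or, if you do want inverse-preservation, note that once $(\vee 1)$ holds you may invoke Lemma~\ref{lem:veei} rather than re-deriving it by hand.
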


\begin{proof}
($\Leftarrow$)  Immediate.

($\Rightarrow$) Suppose that $\theta:S\rightarrow T$ is a
$(\vee,i)$-premorphism.  Then
$$s^+\theta=(ss^{-1})\theta\leq (s\theta)(s^{-1}\theta)=(s\theta)(s\theta)^{-1}=(s\theta)^+,$$
using Lemma~\ref{lem:veei}.  Similarly, $s^*\theta\leq
(s\theta)^*$.
\end{proof}

We note some useful properties of $(\vee,r)$-premorphisms:

\begin{lem}
\label{lem:vee-props} Let $S$ and $T$ be restriction semigroups
with respect to semilattices $E$ and $F$, respectively.  If
$\theta:S\rightarrow T$ is a $(\vee,r)$-premorphism, then
\begin{enumerate}
    \item[(a)] $e\in E(S)\Rightarrow e\theta\in E(T)$;
    \item[(b)] $e\in E\Rightarrow e\theta\in F$;
    \item[(c)] $(s\theta)^+=s^+\theta$ and $(s\theta)^*=s^*\theta$;
    \item[(d)] $\theta$ is order-preserving.
\end{enumerate}
\end{lem}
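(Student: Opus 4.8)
The plan is to prove the four parts essentially in the stated order, the whole argument resting on two elementary observations about the natural partial order in a restriction semigroup, both immediate from \eqref{eq:npo2}. First, if $t\leq t^2$ then $t=t^2$: for $t=t^+t^2=(t^+t)t=t^2$. Second, if $t\leq t^+$ then $t=t^+$, and hence $t$ lies in the distinguished semilattice: for $t=t^+t^+=t^+$. I will apply these inside the target semigroup $T$.

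For (a), let $e\in E(S)$, so $e=ee$; condition $(\vee 1)$ gives $e\theta=(ee)\theta\leq(e\theta)(e\theta)=(e\theta)^2$, and the first observation forces $(e\theta)^2=e\theta$. For (b), let $e\in E$, so $e^+=e$; condition $(\vee 2)$ gives $e\theta=e^+\theta\leq(e\theta)^+$, and the second observation forces $e\theta=(e\theta)^+\in F$.

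For (c) I would first record that $s\theta=(s^+\theta)(s\theta)$ for every $s\in S$: from $s=s^+s$ and $(\vee 1)$ one gets $s\theta\leq(s^+\theta)(s\theta)$, while $(\vee 2)$ and compatibility of $\leq$ with multiplication give $(s^+\theta)(s\theta)\leq(s\theta)^+(s\theta)=s\theta$. Taking $^+$ of this identity and using Lemma~\ref{lem:+}, $(s\theta)^+=\bigl((s^+\theta)(s\theta)^+\bigr)^+$; by (b) we have $s^+\theta\in F$, so $(s^+\theta)(s\theta)^+$ is a product of elements of $F$ and therefore equal to its own $^+$, whence $(s\theta)^+=(s^+\theta)(s\theta)^+\leq s^+\theta$. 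Together with $s^+\theta\leq(s\theta)^+$ from $(\vee 2)$ this yields $(s\theta)^+=s^+\theta$, and $(s\theta)^*=s^*\theta$ is dual.

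Finally, for (d): if $a\leq b$ then $a=a^+b$ by \eqref{eq:npo2}, so $(\vee 1)$ and then (c) give $a\theta=(a^+b)\theta\leq(a^+\theta)(b\theta)=(a\theta)^+(b\theta)\leq b\theta$, the last inequality holding because $(a\theta)^+\in F$; hence $\theta$ is order-preserving. The only point needing care is (b): one should not assume $t\leq t^+$ holds automatically --- in a restriction semigroup it holds exactly when $t$ is already in the distinguished semilattice --- and it is precisely this fact that makes $(\vee 2)$ bite. Everything else is a routine application of $(\vee 1)$, $(\vee 2)$, Lemma~\ref{lem:+}, and compatibility of the natural partial order with multiplication.
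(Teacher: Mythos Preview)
Your proof is correct and follows essentially the same route as the paper's: the same two elementary consequences of \eqref{eq:npo2} for (a) and (b), the same reduction in (c) via $s\theta=(s^+\theta)(s\theta)$ followed by Lemma~\ref{lem:+} and (b), and the same argument for (d). The only cosmetic difference is that in (c) you obtain $s\theta=(s^+\theta)(s\theta)$ by proving both inequalities (using compatibility of $\leq$ with multiplication), whereas the paper derives it from the single inequality $s\theta\leq(s^+\theta)(s\theta)$ by multiplying on the right by $(s\theta)^*$ via \eqref{eq:npo2}.
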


\begin{proof}
(a) Let $e\in E(S)$.  From $(\vee 1)$, $e\theta=e^2\theta\leq
(e\theta)^2$.  Then, by \eqref{eq:npo2},
$e\theta=(e\theta)^+(e\theta)^2=(e\theta)^2$, hence $e\theta\in
E(T)$.

(b) Let $e\in E$.  From $(\vee 2)$, $e\theta=e^+\theta\leq
(e\theta)^+$.  Again by \eqref{eq:npo2},
$e\theta=(e\theta)^+(e\theta)^+=(e\theta)^+$, hence $e\theta\in
F$.

(c) We deal with the `$^+$' part; the `$^*$' part is similar.  We
will show that $(s\theta)^+\leq s^+\theta$.  The desired result
will then follow by combining this with $(\vee 2)$.  Let $s\in S$.
From $(\vee 1)$, $s\theta=(s^+s)\theta\leq (s^+\theta)(s\theta)$,
so
\begin{equation}
\label{eq:c}
s\theta=(s^+\theta)(s\theta)(s\theta)^*=(s^+\theta)(s\theta),
\end{equation}
once again by \eqref{eq:npo2}.  Applying $^+$ to both sides of
\eqref{eq:c} gives
$$(s\theta)^+=((s^+\theta)(s\theta))^+=((s^+\theta)(s\theta)^+)^+=(s^+\theta)(s\theta)^+,$$
using Lemma~\ref{lem:+}, and since $s^+\theta\in F$, by (b).
Hence $(s\theta)^+\leq s^+\theta$.

(d) Suppose that $s\leq t$ in $S$.  Then $s=s^+t$, by
\eqref{eq:npo2}, so $s\theta=(s^+t)\theta\leq
(s^+\theta)(t\theta)\leq t\theta$ in $T$.
\end{proof}

Using Lemma~\ref{lem:vee-props}(d), it is easily verified that the
composition of two $(\vee,r)$-pre\-morph\-isms is a
$(\vee,r)$-premorphism, hence restriction semigroups and
$(\vee,r)$-premorphisms constitute a category.  We are now ready
to prove the `arrows' part of Theorem~\ref{thm:wEa-join}, which we
will break down into two parts (Propositions~\ref{prop:vpre-fun} and~\ref{prop:fun-vpre}). Before we do so, however, we first
record the following, which will be used a number of times in the
remainder of this paper:

\begin{lem}
\label{lem:ordering} Let $\alpha:S\rightarrow T$ be an
order-preserving function of restriction semigroups.  We
define $\bc(\alpha):\bc(S)\rightarrow\bc(T)$ to be the same
function on the underlying sets.  Then $\bc(\alpha)$ is
order-preserving.

Let $\beta:C\rightarrow D$ be an order-preserving function of
inductive categories.  We define
$\bs(\beta):\bs(C)\rightarrow\bs(D)$ to be the same function on
the underlying sets.  Then $\bs(\beta)$ is order-preserving.
\end{lem}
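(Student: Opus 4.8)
The plan is to recognise that, in each half of the statement, the order that must be preserved is literally the natural partial order of the restriction semigroup in question; once this is spelled out, both assertions follow immediately from the hypothesis that $\alpha$ (respectively $\beta$) is order-preserving.

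For the first assertion, recall from Theorem~\ref{thm:wEa-ic} that the inductive category $\bc(S)$ has underlying set $S$ and is ordered by the natural partial order $\leq$ of $S$; likewise for $\bc(T)$. Since $\bc(\alpha)$ is by definition the same function as $\alpha$ on underlying sets, and $\alpha$ preserves $\leq$, the map $\bc(\alpha)$ preserves the order of $\bc(S)$, as required.

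For the second assertion, the only point needing attention is that the natural partial order $\preceq$ of the restriction semigroup $\bs(C)=(C,\otimes)$ (with respect to $C_o$) coincides with the given order $\leq$ on $C$; granting this, $\bs(\beta)$ preserves $\leq$ for exactly the reason above. One quick way to see the coincidence is to invoke Theorem~\ref{thm:scs-csc}: since $\bc(\bs(C))=C$ as inductive categories, the order of $C$ must be the order of $\bc(\bs(C))$, which by Theorem~\ref{thm:wEa-ic} is precisely the natural partial order of $\bs(C)$. Alternatively, one verifies it directly. By \eqref{eq:npo2}, and since $\bd(a)$ plays the role of $a^+$ in $\bs(C)$, we have $a\preceq b$ iff $a=\bd(a)\otimes b$; by Lemma~\ref{lem:apseudoe}, $\bd(a)\otimes b=(\bd(a)\wedge\bd(b))|b$, which is $\leq b$ by (Or3)(ii), so $a\preceq b\Rightarrow a\leq b$. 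Conversely, if $a\leq b$ in $C$ then $\bd(a)\leq\bd(b)$ by (Or2), hence $\bd(a)\wedge\bd(b)=\bd(a)$, and Lemma~\ref{lem:a=dab} gives $a=\bd(a)|b=(\bd(a)\wedge\bd(b))|b=\bd(a)\otimes b$, i.e.\ $a\preceq b$. Thus $\preceq$ and $\leq$ agree.

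There is essentially no genuine obstacle here: the lemma is bookkeeping to the effect that the various orders in play are identical, and the only step that is not a one-line observation is the last one, namely the identification of the order induced by the pseudoproduct $\otimes$ on $C$ with the ambient order $\leq$.
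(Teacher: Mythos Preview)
Your proof is correct and follows the same route as the paper: in each direction one simply observes that the order on $\bc(S)$ (respectively $\bs(C)$) is literally the natural partial order of the restriction semigroup, so that order-preservation transfers for free. The paper's proof is a one-line chain of implications for the first part and declares the second ``similar''; you are more careful in actually verifying that the natural partial order of $\bs(C)$ coincides with the given order on $C$, supplying both the slick argument via Theorem~\ref{thm:scs-csc} and a direct check, which is a worthwhile addition since this identification is exactly what the word ``similar'' is hiding.
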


\begin{proof}
Let $s,t\in \bc(S)$.  Then
\begin{align*}
s\leq t\textup{ in }\bc(S)  &\Rightarrow s\leq t\textup{ in }S
                                                         \Rightarrow s\alpha\leq t\alpha\textup{ in }T
                                                         \Rightarrow s\bc(\alpha)\leq t\bc(\alpha)\textup{ in }\bc(T).
\end{align*}
The proof of the second part is similar.
\end{proof}

We now also give a formal definition for the ordered functors
which appear in Theorem~\ref{thm:wEa-join}:

\begin{defn}
Let $\phi:C\rightarrow D$ be a function between ordered categories
$C$ and $D$.  We call $\phi$ an \emph{ordered functor} if
\begin{enumerate}
    \item $\exists x\cdot y$ in $C\Rightarrow \exists (x\phi)\cdot
    (y\phi)$ in $D$ and $(x\phi)\cdot (y\phi)=(x\cdot y)\phi$;
    \item $x\leq y$ in $C\Rightarrow x\phi\leq y\phi$ in $D$.
\end{enumerate}
\end{defn}

\begin{prop}
\label{prop:vpre-fun}
Let $S$ and $T$ be restriction semigroups with respect to
semilattices $E$ and $F$, respectively.  Let $\theta:S\rightarrow
T$ be a $(\vee,r)$-premorphism.  We define
$\thec:=\bc(\theta):\bc(S)\rightarrow\bc(T)$ to be the same
function on the underlying sets.  Then $\thec$ is an ordered
functor with respect to the restricted products in $\bc(S)$ and
$\bc(T)$.
\end{prop}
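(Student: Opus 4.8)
The plan is to verify, in turn, the two defining conditions of an ordered functor for $\thec$.

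Condition~(2), order-preservation, is essentially free. By Lemma~\ref{lem:vee-props}(d), $\theta$ is order-preserving; since the orderings on $\bc(S)$ and $\bc(T)$ are just the natural partial orders of $S$ and $T$ (Theorem~\ref{thm:wEa-ic}), Lemma~\ref{lem:ordering} immediately gives that $\thec=\bc(\theta)$ is order-preserving.

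For condition~(1), I would start from a composable pair: suppose $\exists x\cdot y$ in $\bc(S)$, which by definition of the restricted product means $x^*=y^+$ and $x\cdot y=xy$ (the product in $S$). First I would check that $(x\thec)\cdot(y\thec)$ is defined in $\bc(T)$, i.e.\ that $\br(x\thec)=\bd(y\thec)$, i.e.\ $(x\theta)^*=(y\theta)^+$; this follows at once from Lemma~\ref{lem:vee-props}(c), since $(x\theta)^*=x^*\theta=y^+\theta=(y\theta)^+$. Thus $\exists(x\thec)\cdot(y\thec)$ in $\bc(T)$, and this restricted product is the ordinary product $(x\theta)(y\theta)$ in $T$.

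The substantive step --- and the only real obstacle --- is then to show $(x\theta)(y\theta)=(xy)\theta$, i.e.\ to upgrade the premorphism inequality $(\vee 1)$ to an equality on composable pairs. The mechanism is to compute $^+$ on both sides. Using Lemma~\ref{lem:+}, the identity $(y\theta)^+=y^+\theta=x^*\theta=(x\theta)^*$ from Lemma~\ref{lem:vee-props}(c), and $(x\theta)(x\theta)^*=x\theta$, I get $((x\theta)(y\theta))^+=((x\theta)(y\theta)^+)^+=((x\theta)(x\theta)^*)^+=(x\theta)^+$; and similarly, using $x^*=y^+$ and $xx^*=x$, $((xy)\theta)^+=(xy)^+\theta=(xy^+)^+\theta=(xx^*)^+\theta=x^+\theta=(x\theta)^+$. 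Since $(xy)\theta\leq(x\theta)(y\theta)$ by $(\vee 1)$ and the two elements now have a common value under $^+$, \eqref{eq:npo2} forces $(xy)\theta=((xy)\theta)^+\,(x\theta)(y\theta)=(x\theta)^+(x\theta)(y\theta)=(x\theta)(y\theta)$. (Equivalently, both $(xy)\thec$ and $(x\thec)\cdot(y\thec)$ lie below $(x\thec)\cdot(y\thec)$ in $\bc(T)$ and share the same domain $(x\theta)^+$, so Lemma~\ref{lem:so-useful} applies.) With this, condition~(1) holds and $\thec$ is an ordered functor.
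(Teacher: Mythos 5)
Your proof is correct and takes essentially the same route as the paper: composability of the images via Lemma~\ref{lem:vee-props}(c), then upgrading $(\vee 1)$ to an equality by computing a projection of both sides and applying \eqref{eq:npo2} (equivalently Lemma~\ref{lem:so-useful}), with order-preservation from Lemmas~\ref{lem:vee-props}(d) and~\ref{lem:ordering}; the only (immaterial) difference is that you compare domains via $^+$ where the paper compares ranges via $^*$. The paper additionally notes at the outset that $\thec$ sends identities to identities (immediate from Lemma~\ref{lem:vee-props}(b)), a remark worth including even though it does not affect the two conditions you verify.
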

\begin{proof}
Since $\theta$ maps $E$ into $F$, plus $\bc(S)_o=E$ and
$\bc(T)_o=F$, we see that $\thec$ maps identities in $\bc(S)$ to
identities in $\bc(T)$.

We now show that $\thec$ respects restricted products.  Suppose
that $\exists s\cdot t$ in $\bc(S)$.  Then $s^*=t^+$ in $S$, hence
$(s\thec)^*=(s\theta)^*=s^*\theta=t^+\theta=(t\theta)^+=(t\thec)^+$,
by Lemma~\ref{lem:vee-props}(c).  We conclude that $\exists
(s\thec)\cdot(t\thec)$ in $\bc(T)$.  We will now use
Lemma~\ref{lem:so-useful} to show that $(s\cdot
t)\thec=(s\thec)\cdot (t\thec)$.  Note first of all that $(s\cdot t)\thec$ $=(st)\theta\leq (s\theta)(t\theta)$ and that $(s\thec)\cdot
(t\thec)=(s\theta)(t\theta)\leq (s\theta)(t\theta)$.  Now, using
Lemma~\ref{lem:+},
$$\br((s\cdot t)\thec)=(st)\theta^*=(st)^*\theta=(s^*t)^*\theta=t^*\theta,$$
since $s^*=t^+$.  Also,
\begin{align*}
\br((s\thec)\cdot (t\thec))
&=((s\theta)(t\theta))^*=((s\theta)^*(t\theta))^*=((s^*\theta)(t\theta))^*=((t^+\theta)(t\theta))^* \\
&=((t\theta)^+(t\theta))^*=(t\theta)^*=t^*\theta=\br((s\cdot
t)\thec).
\end{align*}
Hence, by Lemma~\ref{lem:so-useful}, $(s\cdot
t)\thec=(s\thec)\cdot (t\thec)$.

Finally, $\thec$ is order-preserving, by Lemma~\ref{lem:ordering}.
\end{proof}

\begin{prop}
\label{prop:fun-vpre}
Let $\phi:C\rightarrow D$ be an ordered functor of inductive
categories.  We define $\phis:=\bs(\phi):\bs(C)\rightarrow\bs(D)$
to be the same function on the underlying sets.  Then $\phis$ is a
$(\vee,r)$-premorphism with respect to the pseudoproducts in
$\bs(C)$ and $\bs(D)$.
\end{prop}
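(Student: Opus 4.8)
The plan is to verify the two defining conditions of a $(\vee,r)$-premorphism directly for $\Phi=\phi$, regarded as a map of the restriction semigroups $\bs(C)\to\bs(D)$. Throughout I would keep in mind that the natural partial order on $\bs(C)$ (respectively $\bs(D)$) is exactly the order $\leq$ of $C$ (respectively $D$), by Theorem~\ref{thm:scs-csc}, and that for $a\in\bs(C)$ one has $a^+=\bd(a)$ and $a^*=\br(a)$ as computed in $C$.

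I would begin with the observation that $\phi$, being a functor, carries $C_o$ into $D_o$ and is therefore compatible with $\bd$ and $\br$: $\bd(a)\phi=\bd(a\phi)$ and $\br(a)\phi=\br(a\phi)$ for all $a\in C$. As $\phi$ is also order-preserving, it respects restrictions and corestrictions (see the comments following Definition~\ref{defn:ord-cat}). Condition $(\vee 2)$ is then immediate — indeed it holds with equality: $s^+\Phi=\bd(s)\phi=\bd(s\phi)=(s\Phi)^+$, and dually $s^*\Phi=(s\Phi)^*$.

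The substance of the proof is condition $(\vee 1)$, i.e.\ $(s\otimes t)\phi\leq(s\phi)\otimes(t\phi)$ in $D$. I would expand both pseudoproducts. Setting $e=\br(s)\wedge\bd(t)$ (the meet taken in $C$), the pseudoproduct $s\otimes t=(s|e)\cdot(e|t)$ is a genuinely defined product in $C$ (its factors meet at $e$), so applying the functor $\phi$ and using compatibility with restriction and corestriction gives $(s\otimes t)\phi=\bigl((s\phi)|(e\phi)\bigr)\cdot\bigl((e\phi)|(t\phi)\bigr)$. Writing $f=\br(s\phi)\wedge\bd(t\phi)$ (the meet taken in $D$), the pseudoproduct $(s\phi)\otimes(t\phi)$ is $\bigl((s\phi)|f\bigr)\cdot\bigl(f|(t\phi)\bigr)$. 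Since $e\leq\br(s),\bd(t)$ in $C$ and $\phi$ preserves order, domains and ranges, $e\phi$ is a common lower bound of $\br(s\phi)$ and $\bd(t\phi)$ in $D$, whence $e\phi\leq f$. Lemma~\ref{lem:rest-ord} and its dual now give $(s\phi)|(e\phi)\leq(s\phi)|f$ and $(e\phi)|(t\phi)\leq f|(t\phi)$; as both of the products $\bigl((s\phi)|(e\phi)\bigr)\cdot\bigl((e\phi)|(t\phi)\bigr)$ and $\bigl((s\phi)|f\bigr)\cdot\bigl(f|(t\phi)\bigr)$ are defined, (Or1) yields $(s\otimes t)\phi\leq(s\phi)\otimes(t\phi)$, as required. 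Finally $\Phi$ is order-preserving by Lemma~\ref{lem:ordering}, so restriction semigroups and $(\vee,r)$-premorphisms do receive $\Phi$ as an arrow.

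The point to watch — and really the only non-routine ingredient — is that an \emph{ordered} functor need not preserve meets of identities, so a priori we have only $e\phi=\bigl(\br(s)\wedge\bd(t)\bigr)\phi\leq\br(s\phi)\wedge\bd(t\phi)=f$ rather than equality. It is exactly this one directional inequality, routed through Lemma~\ref{lem:rest-ord} and (Or1), that makes $(\vee 1)$ an inequality; forcing equality here would require $\phi$ to respect the meet operation, i.e.\ to be an \emph{inductive} functor, which is what the `morphisms' version of the theorem will demand. Everything else is manipulation of the formulas for restriction, corestriction and pseudoproduct.
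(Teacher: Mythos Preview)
Your proof is correct and follows essentially the same route as the paper: decompose $s\otimes t$ as a genuine $\cdot$-product $(s|e)\cdot(e|t)$ with $e=\br(s)\wedge\bd(t)$, apply the functor, and compare with $(s\phi)\otimes(t\phi)$. The only cosmetic difference is in the final step: the paper phrases the comparison at the semigroup level (noting $s\otimes e\leq s$, $e\otimes t\leq t$ and invoking compatibility of $\otimes$ with the order in $\bs(D)$), whereas you unfold it at the category level via $e\phi\leq f$, Lemma~\ref{lem:rest-ord} and (Or1); these are the same inequality seen from the two sides of Theorem~\ref{thm:scs-csc}.
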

\begin{proof}
Let $s,t\in \bs(C)$.  Note that if $e=s^*\otimes t^+$, then
\begin{align*}
(s\otimes e)^*  &=(s\otimes s^*\otimes t^+)^*=(s\otimes t^+)^*=(s^*\otimes t^+)^*=s^*\otimes t^+ \\
                                &=(s^*\otimes t^+)^+=(s^*\otimes t)^+=(s^*\otimes t^+\otimes t)^+=(e\otimes t)^+,
\end{align*}
so $\exists (s\otimes e)\cdot (e\otimes t)$ and $(s\otimes e)\cdot
(e\otimes t)=s\otimes t$.  Since $\phi$ is a functor, we have
$(s\otimes t)\phi=(s\otimes e)\phi\cdot (e\otimes t)\phi=(s\otimes
e)\phi\otimes (e\otimes t)\phi,$ using the fact that $\cdot$ and
$\otimes$ coincide whenever $\cdot$ is defined. Now, $s\otimes
e\leq s$ and $e\otimes t\leq t$, so $(s\otimes e)\phi\leq s\phi$
and $(e\otimes t)\phi\leq t\phi$.  Hence
$$(s\otimes t)\phis=(s\otimes t)\phi=(s\otimes e)\phi\otimes (e\otimes t)\phi\leq (s\phi)\otimes (t\phi)=(s\phis)\otimes (t\phis).$$

Let $s\in S$.  Since functors preserve domains, we have
$(s\phis)^+=\bd(s\phi)=\bd(s)\phi=s^+\phis$. Then, in particular,
$(s\phis)^+\geq s^+\phis$.  Similarly, $(s\phis)^*\geq s^*\phis$.
\end{proof}

It is clear that if $\theta:S\rightarrow T$ is a
$(\vee,r)$-premorphism and $\phi:C\rightarrow D$ is an ordered
functor, then $\bs(\bc(\theta))=\theta$ and $\bc(\bs(\phi))=\phi$.
Furthermore, if $\theta':T\rightarrow T'$ is another
$(\vee,r)$-premorphism of restriction semigroups, and
$\phi':D\rightarrow D'$ is another ordered functor of inductive
categories, then $\bc(\theta\theta')=\bc(\theta)\bc(\theta')$ and
$\bs(\phi\phi')=\bs(\phi)\bs(\phi')$. We have therefore proved
Theorem~\ref{thm:wEa-join}.

The constructions of
Theorems~\ref{thm:wEa-ic} and~\ref{thm:ic->wEas} carry over to the
inverse case in such a way that if $S$ is an inverse semigroup and
$G$ is an inductive groupoid, then ${\bf C}(S)$ is an inductive
groupoid and ${\bf S}(G)$ is an inverse semigroup (see \cite{lawson1998}).  Then, using
Lemma~\ref{lem:veei-veew}, we see that the `$\vee$-premorph\-isms'
part of Theorem~\ref{esn} follows from Theorem~\ref{thm:wEa-join}
as a corollary.

\section{$(\wedge,r)$-premorphisms}
\label{sec:meet}

We now turn our attention to the derivation of an ESN-type Theorem
for `$\wedge$-premorphisms'.  Two results of this type will be proved in
this section.  The first will employ the more naive version of a `$(\wedge,r)$-premorphism', alluded to in the Introduction.  However, we will not be able to prove an analogue of Lemma~\ref{lem:veei-veew} for these $(\wedge,r)$-premorphisms.  In order to make the desired connection with the functions of Definition~\ref{defn:wedge-i}, and thereby deduce an ESN-type theorem for inverse semigroups and $(\wedge,i)$-premorphisms, we need the notion of a \emph{strong} $(\wedge,r)$-premorphism.  In the second part of this section, we will prove an ESN-type theorem for strong $(\wedge,r)$-premorphisms which will follow from the weaker version as a corollary.

\subsection{The weaker case: ordered $\boldsymbol{(\wedge,r)}$-premorphisms}

The goal of this subsection is the proof of the following theorem:

\begin{thm}
\label{thm:wEajoin-indcat} The category of restriction semigroups
and ordered $(\wedge,r)$-pre\-morphisms is isomorphic to the
category of inductive categories and inductive category
prefunctors.
\end{thm}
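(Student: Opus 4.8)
The plan is to follow exactly the template established for Theorem~\ref{thm:wEa-join}, since the structure of the proof is entirely parallel: the ``objects'' part is already handled by Theorems~\ref{thm:wEa-ic}, \ref{thm:ic->wEas} and~\ref{thm:scs-csc}, so only the ``arrows'' part remains. First I would give the definitions that the statement presupposes but which have not yet appeared in the excerpt: the notion of a (naive) \emph{$(\wedge,r)$-premorphism} $\theta\colon S\to T$ of restriction semigroups, obtained from Definition~\ref{defn:wedge-i} by replacing $(\wedge 1)$ with $(s\theta)(t\theta)\le(st)\theta$ and replacing $(\wedge 2)'$ with the pair $(s^+)\theta=(s\theta)^+$ and $(s^*)\theta=(s\theta)^*$; the notion of an \emph{ordered} $(\wedge,r)$-premorphism (additionally order-preserving); and the notion of an \emph{inductive category prefunctor} $\phi\colon C\to D$, which should be the analogue of an ordered functor in which the condition ``$\exists x\cdot y\Rightarrow (x\phi)(y\phi)=(x\cdot y)\phi$'' is weakened to ``$\exists x\cdot y\Rightarrow (x\phi)(y\phi)\le(x\cdot y)\phi$'' (more precisely, to a statement about pseudoproducts: $(x\otimes y)\phi\ge(x\phi)\otimes(y\phi)$ in the appropriate direction), together with order-preservation and preservation of domains and ranges. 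Getting these definitions right — in particular fixing the correct direction of the inequalities so that the two constructions are mutually inverse — is the conceptual heart of the matter, and is where I would spend the most care.

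Next I would record the basic properties of ordered $(\wedge,r)$-premorphisms that play the role of Lemma~\ref{lem:vee-props}: that $\theta$ maps $E$ into $F$ (this follows from the $(\wedge 2)$-type conditions directly, since $e=e^+$ gives $e\theta=(e\theta)^+\in F$), that $\theta$ maps idempotents to idempotents, and that condition (c) of Lemma~\ref{lem:vee-props} — here automatic, since it is built into the definition. I would also check that ordered $(\wedge,r)$-premorphisms compose: this is the analogue of the remark after Lemma~\ref{lem:vee-props}, and is precisely the point flagged in the Introduction, namely that order-preservation is exactly what is needed to make composition work (without it, the $(\wedge 1)$-inequalities do not chain up correctly because a premorphism need not be monotone). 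The verification uses that if $\theta,\theta'$ are ordered then $(s\theta\theta')(t\theta\theta')\le((s\theta)(t\theta))\theta'\le((st)\theta)\theta'$, the first step using monotonicity of $\theta'$ applied to $(s\theta)(t\theta)\le(st)\theta$.

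Then come the two propositions mirroring Propositions~\ref{prop:vpre-fun} and~\ref{prop:fun-vpre}. In the first, starting from an ordered $(\wedge,r)$-premorphism $\theta\colon S\to T$, I define $\Theta=\bc(\theta)$ on underlying sets and show it is an inductive category prefunctor $\bc(S)\to\bc(T)$. It maps objects to objects since $\theta$ maps $E$ into $F$; it preserves domains and ranges since $(s\Theta)^+=(s\theta)^+=(s^+)\theta=(s^+)\Theta=\bd(s)\Theta$; it is order-preserving by Lemma~\ref{lem:ordering}; and the key inequality on products follows from $(\wedge 1)$ together with Lemma~\ref{lem:so-useful} in the same way the equality was obtained in Proposition~\ref{prop:vpre-fun} — when $\exists s\cdot t$ in $\bc(S)$ one has $(s\thec)(t\thec)=(s\theta)(t\theta)\le(st)\theta=(s\cdot t)\thec$, and both sides lie below $(st)\theta$ with the same range, so if we want the clean statement we invoke the computation of ranges via Lemma~\ref{lem:+} exactly as before. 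In the second proposition, starting from an inductive category prefunctor $\phi\colon C\to D$, I set $\Phi=\bs(\phi)$ and show it is an ordered $(\wedge,r)$-premorphism on pseudoproducts: order-preservation comes from Lemma~\ref{lem:ordering}; the $(\wedge 2)$-type equalities come from $\phi$ preserving domains and ranges; and the $(\wedge 1)$-inequality $(s\Phi)\otimes(t\Phi)\le(s\otimes t)\Phi$ is obtained by factoring $s\otimes t$ through the restricted product exactly as in Proposition~\ref{prop:fun-vpre} — writing $e=s^*\otimes t^+$ so that $s\otimes t=(s\otimes e)\cdot(e\otimes t)$, applying the prefunctor inequality, and then using $(s\otimes e)\phi\le s\phi$, $(e\otimes t)\phi\le t\phi$ together with (Or1). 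Finally I would note, as in the paragraph after Proposition~\ref{prop:fun-vpre}, that $\bs\bc\theta=\theta$, $\bc\bs\phi=\phi$, and that both $\bc$ and $\bs$ are functorial on arrows, which completes the proof. The main obstacle, as I said, is not any single calculation but pinning down the precise definition of ``inductive category prefunctor'' so that the directions of all the inequalities are mutually consistent; once the definitions are correctly aligned, every step is a routine adaptation of Section~\ref{sec:join}.
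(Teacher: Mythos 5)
There is a genuine gap, and it sits exactly at the point you yourself flag as ``the conceptual heart'': your proposed definition of inductive category prefunctor (weakened product condition, order-preservation, preservation of domains and ranges) is too weak to run the direction from prefunctors back to premorphisms. For an ordered functor, Proposition~\ref{prop:fun-vpre} gets $(s\otimes t)\phi\leq(s\phi)\otimes(t\phi)$ by factoring $s\otimes t=(s\otimes e)\cdot(e\otimes t)$ with $e=s^*\otimes t^+$; but if you try the same factorisation in the $\wedge$ setting, the inequalities all point the wrong way: (ICP1) gives $\bigl((s\otimes e)\psi\bigr)\otimes\bigl((e\otimes t)\psi\bigr)\leq(s\otimes t)\psi$, while $(s\otimes e)\psi\leq s\psi$, $(e\otimes t)\psi\leq t\psi$ and monotonicity of the pseudoproduct give $\bigl((s\otimes e)\psi\bigr)\otimes\bigl((e\otimes t)\psi\bigr)\leq(s\psi)\otimes(t\psi)$, so both sides of the desired inequality $(s\psi)\otimes(t\psi)\leq(s\otimes t)\psi$ are merely bounded below by a common element and nothing follows. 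The paper closes this hole by imposing a fourth axiom, (ICP4) (equivalently $a\psi\otimes f\psi\leq(a\otimes f)\psi$ and $e\psi\otimes a\psi\leq(e\otimes a)\psi$ for identities $e,f$), i.e.\ compatibility with restrictions and corestrictions; the proof of $(\wedge 1)$ for $\bs(\psi)$ in Proposition~\ref{prop:ocp-join} is precisely the chain of inequalities displayed in Lemma~\ref{lem:comp-icp}, which uses (ICP4) in an essential way. That same chain is also what shows inductive category prefunctors are closed under composition (Lemma~\ref{lem:comp-icp}); you check composability of ordered $(\wedge,r)$-premorphisms but never of the prefunctors, so with your definitions you have not even established that the right-hand side is a category.

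A secondary discrepancy: you take $(\wedge 2)$ to be the equalities $s^+\theta=(s\theta)^+$ and $s^*\theta=(s\theta)^*$, whereas Definition~\ref{defn:meet-w} asks only for the inequalities $(s\theta)^+\leq s^+\theta$ and $(s\theta)^*\leq s^*\theta$, matching the inequality form of (ICP2), $\bd(s\psi)\leq\bd(s)\psi$. Unlike the $\vee$ case (Lemma~\ref{lem:vee-props}(c)), the equalities are not automatic here, so insisting on them changes the class of arrows and breaks the intended correspondence. Your forward direction (from $\theta$ to $\bc(\theta)$) is otherwise in line with Proposition~\ref{prop:meetw-ipc}, except that you must additionally verify the (ICP4) condition, which in $\bc(S)$ amounts to the short computation $(a\theta)(a\theta)^*(f\theta)=(a\theta)(f\theta)\leq(af)\theta=(a|\br(a)\wedge f)\theta$.
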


We note that the `objects' part of this theorem has been taken care of by the results of Section~\ref{sec:prelims}; it remains to deal with the `arrows' part.

We make the following definition, based upon the one-sided case in \cite{gh}:

\begin{defn}
\label{defn:meet-w} A function $\theta:S\rightarrow T$ between
restriction semigroups is called a \emph{$(\wedge,r)$-premorphism}
if
\begin{enumerate}
    \item[($\wedge 1$)] $(s\theta)(t\theta)\leq (st)\theta$;
    \item[($\wedge 2$)] $(s\theta)^+\leq s^+\theta$ and $(s\theta)^*\leq s^*\theta$.
\end{enumerate}
If, in addition, $\theta$ is order-preserving, we call it an
\emph{ordered $(\wedge,r)$-premorphism}.
\end{defn}

We note that whilst a $(\vee,r)$-premorphism is automatically
order-preserving, this is not the case for a
$(\wedge,r)$-premorphism: we must demand this explicitly.  As
noted in the Introduction, arbitrary $(\wedge,r)$-premorphisms do
not compose to give a $(\wedge,r)$-premorphism.  However, it is
easily verified that the composition of two \emph{ordered}
$(\wedge,r)$-premorphisms is an ordered $(\wedge,r)$-premorphism.
Restriction semigroups together with ordered
$(\wedge,r)$-premorphisms therefore form a category.

\begin{lem}
\label{lem:meet-props} Let $S$ and $T$ be restriction semigroups
with respect to semilattices $E$ and $F$, respectively.  Let
$\theta:S\rightarrow T$ be an ordered $(\wedge,r)$-premorphism.
If $e\in E$, then $e\theta\in F$.
\end{lem}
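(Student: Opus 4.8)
The plan is to mimic the structure of the argument for Lemma~\ref{lem:vee-props}(b), adapting it to the reversed inequalities of Definition~\ref{defn:meet-w}. Fix $e\in E$. The goal is to show $e\theta\in F$, i.e.\ that $e\theta$ is an idempotent lying in the distinguished semilattice $F$ of $T$; since $T$ is a restriction semigroup with respect to $F$, it suffices to produce a relation of the form $e\theta=(e\theta)^+$ (or equivalently $e\theta\leq(e\theta)^+$, using \eqref{eq:npo2}), because then $e\theta$ coincides with an element of $F$.

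First I would record that $e=e^+$ in $S$, since $e\in E$ (this is noted immediately after Definition~\ref{defn:wlEa}). Applying condition $(\wedge 2)$ to the element $e$ gives $(e\theta)^+\leq e^+\theta=e\theta$. So we already have $(e\theta)^+\leq e\theta$ in $T$. Next I would obtain the reverse inequality $e\theta\leq(e\theta)^+$. For this I would use $(\wedge 1)$ with $s=t=e$: since $e=e^2$ in $S$, we get $(e\theta)(e\theta)\leq(e^2)\theta=e\theta$, i.e.\ $(e\theta)^2\leq e\theta$. Combining $(e\theta)^2\leq e\theta$ with $(e\theta)^+\leq e\theta$ and using that $\theta$ is order-preserving is not quite enough on its own, so the cleaner route is: from $(e\theta)^2\leq e\theta$ and \eqref{eq:npo2} applied to $x=(e\theta)^2\leq e\theta$, deduce $(e\theta)^2=((e\theta)^2)^+(e\theta)$; but $((e\theta)^2)^+=((e\theta)(e\theta)^+)^+=(e\theta)^+$ by Lemma~\ref{lem:+}, so $(e\theta)^2=(e\theta)^+(e\theta)$. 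I would instead argue directly: from $(e\theta)^+\leq e\theta$ we get, using \eqref{eq:npo2}, that $(e\theta)^+=(e\theta)^+ \cdot e\theta \cdot$-type manipulations force $e\theta$ to be idempotent. Concretely, $(e\theta)^+\leq e\theta$ means $(e\theta)^+=(e\theta)^+(e\theta)$ and $(e\theta)^+=(e\theta)((e\theta)^+)^*=e\theta\cdot(e\theta)^+ $; combined with $(e\theta)^2\leq e\theta$, i.e.\ $(e\theta)^2=(e\theta)^2((e\theta))^*$ — at this point the bookkeeping is routine but slightly fiddly, and this is the one place I would take care.

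The cleanest packaging, which I expect to be the main step worth writing out, is: show $e\theta$ is idempotent first (from $(e\theta)^2\leq e\theta$ together with $(e\theta)^+\leq e\theta\leq(e\theta)^+\cdot\text{something}$ forcing equality $e\theta=(e\theta)^+$), and then conclude $e\theta=(e\theta)^+\in F$ since the $^+$ of any element of a restriction semigroup lies in $F$ by Definition~\ref{defn:wlEa}(1). The only real obstacle is getting the inequality $e\theta\leq(e\theta)^+$ — once that is in hand, $(e\theta)^+\leq e\theta$ from $(\wedge 2)$ gives $e\theta=(e\theta)^+\in F$ by antisymmetry of $\leq$, and we are done. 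I expect the $e\theta\leq(e\theta)^+$ direction to come out of $(\wedge 1)$ applied to $s=t=e$ together with an application of \eqref{eq:npo2} and Lemma~\ref{lem:+}, exactly paralleling part (b) of Lemma~\ref{lem:vee-props} but with the roles of $e\theta$ and $(e\theta)^+$ interchanged; no use of order-preservation of $\theta$ should actually be needed for this particular lemma, though it is available.
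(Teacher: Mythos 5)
Your opening move is the right one, and in fact you come within a hair's breadth of the paper's entire proof before veering off. From $(\wedge 2)$ you correctly get $(e\theta)^+\leq e^+\theta=e\theta$, and you even write down, via \eqref{eq:npo2}, the identity $(e\theta)^+=(e\theta)^+(e\theta)$. At that point you are done: $(e\theta)^+$ is a left identity for $e\theta$ (as noted after Definition~\ref{defn:wlEa}), so $(e\theta)^+(e\theta)=e\theta$, whence $(e\theta)^+=e\theta$ and $e\theta\in F$. This is exactly the paper's two-line argument, and it uses only $(\wedge 2)$ --- no $(\wedge 1)$, no Lemma~\ref{lem:+}, no order-preservation, and no antisymmetry between two separately established inequalities.

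The genuine gap is that you do not notice this and instead declare the decisive manipulation ``fiddly,'' then restructure the proof around establishing the reverse inequality $e\theta\leq(e\theta)^+$, which you explicitly call ``the only real obstacle'' and propose to extract from $(\wedge 1)$ with $s=t=e$. That step is never carried out, and the route is doubtful as stated: $(\wedge 1)$ only yields $(e\theta)^2\leq e\theta$, and it is not at all clear how to pass from that to $e\theta\leq(e\theta)^+$ without already knowing $e\theta$ is idempotent --- which is the very thing being proved. So as submitted the proof is incomplete at its crucial point; the fix is simply to finish the computation you started, i.e.\ observe $(e\theta)^+=(e\theta)^+(e\theta)=e\theta$, rather than hunting for a second inequality. (Your instinct that the argument should mirror Lemma~\ref{lem:vee-props}(b) with the inequality reversed, and that order-preservation is not needed, is correct; it is the execution of that mirror image that is missing.)
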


\begin{proof}
Let $e\in E$.  Then, by ($\wedge 2$), $(e\theta)^+\leq
e^+\theta=e\theta$.  By \eqref{eq:npo2}, we have
$(e\theta)^+=(e\theta)^+(e\theta)=e\theta\in F$.
\end{proof}

We must now define a corresponding function between inductive
categories:

\begin{defn}
\label{defn:ord-cat-pre} A function $\psi:C\rightarrow D$ between
inductive categories is called an \emph{inductive category
prefunctor} if
\begin{enumerate}
    \item[(ICP1)] $\exists s\cdot t$ in $C\Rightarrow (s\psi)\otimes (t\psi)\leq (s\cdot t)\psi$;
    \item[(ICP2)] $\bd(s\psi)\leq\bd(s)\psi$ and $\br(s\psi)\leq\br(s)\psi$;
    \item[(ICP3)] $s\leq t$ in $C\Rightarrow s\psi\leq t\psi$ in $D$;
    \item[(ICP4)] \begin{enumerate}
                                    \item[(a)] for $a\in C$ and $f\in C_o$, $a\psi|\br(a\psi)\wedge f\psi\leq (a|\br(a)\wedge f)\psi$;
                                    \item[(b)] for $a\in C$ and $e\in C_o$, $e\psi\wedge\bd(a\psi)|a\psi\leq (e\wedge\bd(a)|a)\psi$.
                                \end{enumerate}
\end{enumerate}
\end{defn}

Note that we can use Lemma~\ref{lem:apseudoe} to rewrite condition
(ICP4) in a more compact form:
\begin{enumerate}
    \item[(ICP4)$'$]    \begin{enumerate}
                                            \item[(a)] for $a\in C$ and $f\in C_o$, $a\psi\otimes f\psi\leq (a\otimes f)\psi$;
                                            \item[(b)] for $a\in C$ and $e\in C_o$, $e\psi\otimes a\psi\leq (e\otimes a)\psi$.
                                        \end{enumerate}
\end{enumerate}

\begin{lem}
Let $\psi:C\rightarrow D$ be an inductive category prefunctor of
inductive categories.  If $e\in C_o$, then $e\psi\in D_o$.
\end{lem}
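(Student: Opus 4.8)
The plan is to mimic, in the prefunctor setting, the proof of Lemma~\ref{lem:meet-props}, since an inductive category prefunctor is the category-theoretic incarnation of an ordered $(\wedge,r)$-premorphism. Let $e\in C_o$. The idea is to show that $e\psi$ equals its own range (equivalently, its own domain), whence $e\psi$ is an identity of $D$. First I would observe that, since $e\in C_o$, we have $\br(e)=e$, and therefore, by (ICP2), $\br(e\psi)\leq\br(e)\psi=e\psi$. Next, because $D$ is an ordered category, Lemma~\ref{lem:a=dab} gives $e\psi=e\psi|\br(e\psi)$; but also $\br(e\psi)\leq\br(e\psi)$ and $\br(\br(e\psi))=\br(e\psi)$, so the inequalities $e\psi\leq e\psi$ and $\br(e\psi)\leq\br(e\psi)$ with a common range... wait, that is the wrong pair. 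Let me instead argue directly: from $\br(e\psi)\leq e\psi$ we have two elements below $e\psi$, namely $e\psi$ itself and $\br(e\psi)$; I want to conclude they are equal.

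The cleaner route is as follows. Since $e\psi\leq e\psi$ trivially, Lemma~\ref{lem:a=dab} gives $e\psi=\bd(e\psi)|e\psi$. Now $\bd(e\psi)$ is an identity with $\bd(e\psi)\leq\bd(e)\psi=e\psi$ by (ICP2). So $\bd(e\psi)$ and $e\psi$ are both $\leq e\psi$; they do not obviously have equal domains or ranges, so Lemma~\ref{lem:so-useful} does not apply immediately. Instead I would compute $\br(\bd(e\psi)|e\psi)$. By the definition of restriction (or by the remarks following Theorem~\ref{thm:wEa-ic} translated to the abstract ordered category, or directly), actually the identity $\bd(e\psi)|e\psi = \bd(e\psi)\otimes e\psi$ by Lemma~\ref{lem:apseudoe} applied with the identity on the left; but $\bd(e\psi)\otimes e\psi = \bd(e\psi)\wedge\bd(e\psi)|e\psi = \bd(e\psi)|e\psi = e\psi$ — this is circular. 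The genuinely clean argument: from $\br(e\psi)\le e\psi$ and $e\psi\le e\psi$, apply Lemma~\ref{lem:a=dab} to $\br(e\psi)\le e\psi$ to get $\br(e\psi)=\bd(\br(e\psi))|e\psi=\br(e\psi)|e\psi$ — again the last expression is an identity. But Lemma~\ref{lem:a=dab} also gives $e\psi = e\psi|\br(e\psi)$ directly (taking $a=b=e\psi$... no, taking $a=e\psi\le b=e\psi$ gives $e\psi=e\psi|\br(e\psi)$, trivially true). The key equality is $e\psi|\br(e\psi)$: since $\br(e\psi)$ is an identity and $e\psi|\br(e\psi)$ has range $\br(e\psi)$ which is an identity, and an element whose range is an identity... no. Let me just use: an element $a$ of an ordered category with $a\le f$ for some identity $f$ must itself be an identity, because $a=\bd(a)|a$ and also $a = \bd(a)|f$ (by Lemma~\ref{lem:a=dab} applied to $a\le f$) $=\bd(a)$ since $\bd(a)\le\br(f)=f$ and the restriction of the identity $f$ to $\bd(a)$ is $\bd(a)$ itself (by the remark after Definition~\ref{defn:ord-cat} that restrictions and corestrictions of identities to identities coincide and equal the smaller one). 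Hence $a=\bd(a)\in C_o$.

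So the proof is: by (ICP2), $\br(e\psi)\le\br(e)\psi=e\psi$, and $\br(e\psi)$ is an identity of $D$; by Lemma~\ref{lem:a=dab} applied to $e\psi\le\br(e\psi)$ (using that restriction ordering, $e\psi\leq\br(e\psi)$ in $D$), we get $e\psi=\bd(e\psi)|(\br(e\psi))$; but $\bd(e\psi)$ and $\br(e\psi)$ are both identities, so this restriction is just $\bd(e\psi)$ by the remark following Definition~\ref{defn:ord-cat}. Wait — that needs $\bd(e\psi)\leq\br(e\psi)$, which follows from (Or2) applied to $e\psi\leq\br(e\psi)$: $\bd(e\psi)\leq\bd(\br(e\psi))=\br(e\psi)$. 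Hence $e\psi=\bd(e\psi)\in D_o$, as required. The main obstacle is purely bookkeeping: getting from (ICP2) the single inequality $e\psi\leq\br(e)\psi = e\psi$ is trivially true and useless, so one must instead read (ICP2) as $\br(e\psi)\leq e\psi$ and then exploit that an element bounded above by an identity is itself an identity — a fact that follows from the lemmas on restrictions in an ordered category but is not stated as a standalone lemma, so it must be derived inline. Here is the writeup.

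\begin{proof}
Let $e\in C_o$, so that $\br(e)=e$ in $C$.  By (ICP2), $\br(e\psi)\leq\br(e)\psi=e\psi$ in $D$.  In particular $e\psi$ lies above the identity $\br(e\psi)$, so by (Or2) we have $\bd(e\psi)\leq\bd(\br(e\psi))=\br(e\psi)$.  Applying Lemma~\ref{lem:a=dab} to $e\psi\leq\br(e\psi)$ gives $e\psi=\bd(e\psi)|\br(e\psi)$.  But $\bd(e\psi)$ and $\br(e\psi)$ are both identities of $D$ with $\bd(e\psi)\leq\br(e\psi)$, so the restriction $\bd(e\psi)|\br(e\psi)$ is simply $\bd(e\psi)$, by the remarks following Definition~\ref{defn:ord-cat}.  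Hence $e\psi=\bd(e\psi)\in D_o$.
\end{proof}
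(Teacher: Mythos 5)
Your overall strategy---use (ICP2) to place an identity below $e\psi$ and then force it to equal $e\psi$---is sound, and it is essentially the idea of the paper's proof (which uses the domain half of (ICP2) and works in $\bs(D)$: $(e\psi)^+\leq e^+\psi=e\psi$, whence by \eqref{eq:npo2} $(e\psi)^+=(e\psi)^+\otimes e\psi=e\psi$). However, your write-up has the key inequalities pointing the wrong way at the decisive moment. From (ICP2) you correctly get $\br(e\psi)\leq\br(e)\psi=e\psi$, i.e.\ the identity $\br(e\psi)$ lies \emph{below} $e\psi$. Applying (Or2) to this inequality yields $\bd(\br(e\psi))\leq\bd(e\psi)$, that is $\br(e\psi)\leq\bd(e\psi)$---the opposite of the $\bd(e\psi)\leq\br(e\psi)$ you assert. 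Worse, the next step invokes Lemma~\ref{lem:a=dab} for the inequality $e\psi\leq\br(e\psi)$, which you never established; together with the inequality you do have, it would give the conclusion at once by antisymmetry, so this step assumes essentially what is to be proved. Your background principle that an element lying \emph{below} an identity is itself an identity is true, but it is not the situation you are in: here the identity lies below $e\psi$, and in that direction the implication fails for a general identity (idempotents sit below non-idempotent elements in $\ix$, for instance); it works here only because the identity in question is $e\psi$'s own range. A smaller point: the remark following Definition~\ref{defn:ord-cat} that you cite concerns the case where the restriction and corestriction $e|f$ are \emph{both} defined (which forces $e=f$); the fact you actually want---that for identities $e\leq f$ the restriction $e|f$ equals $e$---follows instead from the uniqueness clause in (Or3).

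The argument is easily repaired from the point you correctly reached. Having $\br(e\psi)\leq e\psi$, note that $\br(e\psi)$ and $e\psi$ are both $\leq e\psi$ and have the same range, namely $\br(e\psi)$ (since $\br(f)=f$ for an identity $f$); hence $\br(e\psi)=e\psi$ by Lemma~\ref{lem:so-useful}, so $e\psi\in D_o$. Alternatively, argue as the paper does on the domain side: $\bd(e\psi)\leq\bd(e)\psi=e\psi$ in $\bs(D)$, and the natural partial order \eqref{eq:npo2} gives $\bd(e\psi)=\bd(e\psi)\otimes e\psi=e\psi$. Either patch is one line, but as written your steps involving (Or2) and Lemma~\ref{lem:a=dab} do not go through.
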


\begin{proof}
Let $e\in C_o$, so that $e=e^+$ in $\bs(C)$.  By (ICP2),
$\bd(e\psi)\leq\bd(e)\psi$ in $D$, so $(e\psi)^+\leq
e^+\psi=e\psi$ in $\bs(D)$.  Then, by definition of ordering in
$\bs(D)$, $(e\psi)^+=(e\psi)^+\otimes (e\psi)=e\psi$.  Thus
$\bd(e\psi)=e\psi$ in $D$.  Hence $e\psi\in D_o$.
\end{proof}

\begin{lem}
\label{lem:comp-icp} The composition of two inductive category
prefunctors is an inductive category prefunctor.
\end{lem}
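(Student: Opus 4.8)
The plan is to fix two inductive category prefunctors $\psi\colon C\to D$ and $\chi\colon D\to E$ and to verify that the (set-theoretic) composite $\psi\chi$ satisfies (ICP1)--(ICP4); associativity of composition is then automatic. Conditions (ICP2) and (ICP3) are routine. For (ICP3) one only notes that a composite of order-preserving maps is order-preserving. For (ICP2) one chains the (ICP2) inequalities for $\chi$ and for $\psi$ with the order-preservation of $\chi$, e.g.\ $\bd(s\psi\chi)\le\bd(s\psi)\chi\le\bigl(\bd(s)\psi\bigr)\chi=\bd(s)\psi\chi$, and dually for $\br$. For (ICP4) I would use the compact form (ICP4)$'$: given $a\in C$ and $f\in C_o$, note $f\psi\in D_o$ by the preceding lemma, apply (ICP4)$'$(a) for $\chi$ to the pair $a\psi,f\psi$ to get $(a\psi\chi)\otimes(f\psi\chi)\le\bigl((a\psi)\otimes(f\psi)\bigr)\chi$, and then push this through (ICP4)$'$(a) for $\psi$ using that $\chi$ is order-preserving; the case with $e\in C_o$ is dual.

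The substantive point is (ICP1). The difficulty is that (ICP1) for $\chi$ only governs $(p\chi)\otimes(q\chi)$ when the honest product $p\cdot q$ is defined in $D$, whereas for the composite we must bound $(s\psi\chi)\otimes(t\psi\chi)$ in terms of $\bigl((s\psi)\otimes(t\psi)\bigr)\chi$, and $s\psi,t\psi$ need not be composable in $D$. I would therefore first isolate the following claim: \emph{for any inductive category prefunctor $\chi$ and any $u,v$ in its domain, $(u\chi)\otimes(v\chi)\le(u\otimes v)\chi$} (this generalises (ICP4)$'$, which is the case where one argument is an identity). Granting the claim, (ICP1) for $\psi\chi$ is immediate: whenever $\exists s\cdot t$ in $C$,
$$(s\psi\chi)\otimes(t\psi\chi)\ \le\ \bigl((s\psi)\otimes(t\psi)\bigr)\chi\ \le\ \bigl((s\cdot t)\psi\bigr)\chi\ =\ (s\cdot t)\psi\chi,$$
using the claim for $\chi$, then (ICP1) for $\psi$ together with order-preservation of $\chi$.

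To prove the claim I would set $k=\br(u)\wedge\bd(v)\in D_o$, so that, by Lemma~\ref{lem:apseudoe}, $u\otimes v=(u\otimes k)\cdot(k\otimes v)$ is an honest product. Applying (ICP1) for $\chi$ to this product, then (ICP4)$'$ for $\chi$ to each factor, and using monotonicity of $\otimes$ in the restriction semigroup $\bs(E)$, one obtains
$$(u\otimes v)\chi\ \ge\ \bigl((u\chi)\otimes(k\chi)\bigr)\otimes\bigl((k\chi)\otimes(v\chi)\bigr)\ =\ (u\chi)\otimes(k\chi)\otimes(v\chi),$$
the last equality because $k\chi$ is idempotent in $E$ and $\otimes$ is associative. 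I expect the genuine obstacle to be showing that this inserted idempotent costs nothing, i.e.\ $(u\chi)\otimes(k\chi)\otimes(v\chi)=(u\chi)\otimes(v\chi)$. The two facts that make this work are: (i) condition (ICP4) in fact forces $\chi$ to preserve meets of identities --- take $a$ to be an identity in (ICP4)(a) and combine with order-preservation --- so $k\chi=\br(u)\chi\wedge\bd(v)\chi$; and (ii) (ICP2) gives $\br(u\chi)\le\br(u)\chi$ and $\bd(v\chi)\le\bd(v)\chi$. From (i) and (ii), Lemma~\ref{lem:apseudoe} yields $(u\chi)\otimes(k\chi)=(u\chi)\,|\,\bigl(\br(u\chi)\wedge\bd(v)\chi\bigr)$, and then a short computation using Lemma~\ref{lem:rest-ord} (composition of corestrictions) identifies $\bigl[(u\chi)\,|\,(\br(u\chi)\wedge\bd(v)\chi)\bigr]\otimes(v\chi)$ with $\bigl[(u\chi)\,|\,m\bigr]\cdot\bigl[m\,|\,(v\chi)\bigr]$, where $m=\br(u\chi)\wedge\bd(v\chi)$, which is exactly $(u\chi)\otimes(v\chi)$. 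This proves the claim, hence (ICP1) for $\psi\chi$, hence the lemma.
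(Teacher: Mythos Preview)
Your proof is correct and shares the paper's overall architecture: both reduce (ICP1) for the composite to the single-prefunctor claim $(u\chi)\otimes(v\chi)\le(u\otimes v)\chi$, then finish with (ICP1) for $\psi$ and order-preservation of $\chi$. The difference lies in how that claim is established. The paper expands the pseudoproduct directly, uses (ICP2) with Lemma~\ref{lem:rest-ord} to replace $\bd(v\chi)$ by $\bd(v)\chi$ (and dually), and then applies (ICP4) in its \emph{original} form --- which is tailored precisely to expressions of the shape $a\chi\,|\,\br(a\chi)\wedge f\chi$ --- followed by (ICP1). Your route instead factors $u\otimes v=(u\otimes k)\cdot(k\otimes v)$, applies (ICP1) and the compact (ICP4)$'$, and is then left with the inserted idempotent $k\chi$ to remove; for that you must first extract meet-preservation of identities from (ICP4), then use (ICP2) and Lemma~\ref{lem:rest-ord}. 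Both arguments use the same ingredients, but the paper's is shorter because the non-compact (ICP4) already has the right shape, so no detour through meet-preservation is needed. Your observation that (ICP4) forces meet-preservation is a nice side fact, but here it is avoidable.
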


\begin{proof}
Let $\psi_1:U\rightarrow V$ and $\psi_2:V\rightarrow W$ be
inductive category prefunctors of inductive categories $U$, $V$
and $W$.  It is easy to show that $\psi_1\psi_2$ satifies
(ICP2)--(ICP4); (ICP1), however, is a little trickier.  Let $s,t\in
U$ and suppose that $\exists s\cdot t$, i.e., $\br(s)=\bd(t)$.
Put $x=s\psi_1$ and $y=t\psi_1$.  Then
\begin{align}
(s\psi_1\psi_2)\otimes (t\psi_1\psi_2)
&=(x\psi_2)\otimes (y\psi_2) \label{first-line} \\
&=(x\psi_2|\br(x\psi_2)\wedge\bd(y\psi_2))\cdot (\br(x\psi_2)\wedge\bd(y\psi_2)|y\psi_2) \notag \\
&=(x\psi_2|\br(x\psi_2)\wedge\bd(y\psi_2))\otimes (\br(x\psi_2)\wedge\bd(y\psi_2)|y\psi_2) \notag \\
\intertext{(since $\cdot$ and $\otimes$ coincide whenever $\cdot$ is defined)}
&\leq (x\psi_2|\br(x\psi_2)\wedge\bd(y)\psi_2)\otimes (\br(x)\psi_2\wedge\bd(y\psi_2)|y\psi_2) \notag \\
\intertext{(by Lemma~\ref{lem:rest-ord}, since
$\br(x\psi_2)\wedge\bd(y\psi_2)\leq\br(x\psi_2)\wedge\bd(y)\psi_2$,
etc.)}
&\leq (x|\br(x)\wedge\bd(y))\psi_2\otimes (\br(x)\wedge\bd(y)|y)\psi_2,\ \textup{by ICP4} \notag \\
&\leq ((x|\br(x)\wedge\bd(y))\cdot (\br(x)\wedge\bd(y)|y))\psi_2,\ \textup{by ICP1} \notag \\
&=(x\otimes y)\psi_2. \label{last-line}
\end{align}
But $x\otimes y=(s\psi_1)\otimes (t\psi_1)\leq (s\cdot t)\psi_1$,
by (ICP1).  Then, by (ICP3) for $\psi_2$, $(s\psi_1\psi_2)\otimes
(t\psi_1\psi_2)\leq (s\cdot t)\psi_1\psi_2$.
\end{proof}

Thus inductive categories and inductive category prefunctors form
a category.

We now prove that the functions of Definitions~\ref{defn:meet-w}
and~\ref{defn:ord-cat-pre} are indeed connected in the desired
way:

\begin{prop}
\label{prop:meetw-ipc} Let $\theta:S\rightarrow T$ be an ordered
$(\wedge,r)$-premorphism of restriction semigroups $S$ and $T$.
We define $\Theta:=\bc(\theta):\bc(S)\rightarrow\bc(T)$ to be the
same function on the underlying sets.  Then $\Theta$ is an
inductive category prefunctor with respect to the restricted
products in $\bc(S)$ and $\bc(T)$.
\end{prop}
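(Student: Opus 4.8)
The plan is to verify the four conditions (ICP1)--(ICP4) for $\Theta=\bc(\theta)$ directly, translating each statement about the inductive categories $\bc(S)$ and $\bc(T)$ back into a statement about the restriction semigroups $S$ and $T$ using the dictionary from Section~\ref{sec:prelims}: namely $\bc(S)_o=E$, $\bd(s)=s^+$, $\br(s)=s^*$, the restricted product $s\cdot t=st$ (when $s^*=t^+$), the restriction $f|a=fa$, the corestriction $a|f=af$, the meet $e\wedge f=ef$, and the pseudoproduct $s\otimes t$ equalling the semigroup product $st$ in $\bs(\bc(T))=T$. Throughout I would use that $\theta$ maps $E$ into $F$ (Lemma~\ref{lem:meet-props}), so meets and (co)restrictions in $\bc(T)$ are again just semigroup products in $T$.

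First, (ICP3) is immediate since $\theta$ is order-preserving by hypothesis. Next, (ICP2): I must show $\bd(s\Theta)\leq\bd(s)\Theta$ and $\br(s\Theta)\leq\br(s)\Theta$, i.e.\ $(s\theta)^+\leq s^+\theta$ and $(s\theta)^*\leq s^*\theta$ in $T$ — but this is precisely condition $(\wedge 2)$ of Definition~\ref{defn:meet-w}. For (ICP1): suppose $\exists s\cdot t$ in $\bc(S)$, so $s^*=t^+$; I need $(s\Theta)\otimes(t\Theta)\leq(s\cdot t)\Theta$, i.e.\ $(s\theta)(t\theta)\leq(st)\theta$ where the left side is the pseudoproduct in $\bc(T)$, which by the remark after Theorem~\ref{thm:wEa-ic} (and the coincidence of $\otimes$ with the $T$-product) is just the ordinary product $(s\theta)(t\theta)$ in $T$. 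So (ICP1) reduces to condition $(\wedge 1)$. Finally (ICP4): using the compact reformulation (ICP4)$'$, part (a) asks for $a\Theta\otimes f\Theta\leq(a\otimes f)\Theta$ for $a\in\bc(S)$, $f\in\bc(S)_o=E$. By Lemma~\ref{lem:apseudoe}, $a\otimes f=a|\br(a)\wedge f=a(a^*f)=af$ in $S$ (using $a^*f\in E$ and idempotent-absorption), and similarly $a\Theta\otimes f\Theta=(a\theta)(f\theta)$ in $T$ since $f\theta\in F$. So (ICP4)(a) becomes $(a\theta)(f\theta)\leq(af)\theta$, which is an instance of $(\wedge 1)$; part (b) is dual, an instance of $(\wedge 1)$ for $e\in E$ and $a$, giving $(e\theta)(a\theta)\leq(ea)\theta$.

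The one point requiring genuine care — and what I expect to be the main (if mild) obstacle — is the bookkeeping in (ICP1) and (ICP4): one must be careful that the pseudoproduct in $\bc(T)$ really does collapse to the semigroup product in $T$, and that expressions like $a\otimes f$ in $\bc(S)$ collapse to $af$ in $S$, so that the four conditions are not merely implied by $(\wedge 1)$ and $(\wedge 2)$ but are literally restatements of instances of them. This uses the explicit formulas recorded after Theorem~\ref{thm:wEa-ic}, Lemma~\ref{lem:apseudoe}, the fact that $\otimes$ and $\cdot$ agree whenever $\cdot$ is defined, and Lemma~\ref{lem:meet-props} to keep the images of idempotents inside the distinguished semilattice. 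Once these translations are in place, nothing else is needed: each of (ICP1)--(ICP4) falls out, and order-preservation of $\Theta$ (needed nowhere in the prefunctor axioms beyond (ICP3)) is Lemma~\ref{lem:ordering}.
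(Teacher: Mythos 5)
Your proposal is correct and follows essentially the same route as the paper: each of (ICP1)--(ICP4) is translated back into $S$ and $T$ via the dictionary $\bd(s)=s^+$, $\br(s)=s^*$, $f|a=fa$, $a|f=af$, $e\wedge f=ef$, whereupon (ICP1) and (ICP4) become instances of $(\wedge 1)$, (ICP2) is $(\wedge 2)$, and (ICP3) is order-preservation (Lemma~\ref{lem:ordering}). The only cosmetic difference is that you verify (ICP4) through the reformulation (ICP4)$'$ and Lemma~\ref{lem:apseudoe}, while the paper computes $a\Theta|\br(a\Theta)\wedge f\Theta=(a\theta)(a\theta)^*(f\theta)$ directly; these are the same calculation, and your explicit appeal to Lemma~\ref{lem:meet-props} to keep $f\theta$ in the distinguished semilattice is a point the paper leaves implicit.
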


\begin{proof}
(ICP1) Suppose that $\exists s\cdot t$.  Then
\begin{align*}
(s\Theta)\otimes (t\Theta)  &=(s\theta)(t\theta)\ \textup{(the product in $T$)} \\
                                                        &\leq (st)\theta=(s\cdot t)\theta,\ \textup{since }\exists s\cdot t \\
                                                        &=(s\cdot t)\Theta.
\end{align*}

(ICP2) We have $\bd(s\Theta)=(s\theta)^+\leq
s^+\theta=\bd(s)\Theta$.  Similarly, $\br(s\Theta)\leq
\br(s)\Theta$.

(ICP3) This follows from Lemma~\ref{lem:ordering}.

(ICP4) Let $a\in\bc(S)$ and $f\in\bc(S)_o=E$.  Then
$$a\Theta|\br(a\Theta)\wedge f\Theta=(a\theta)(a\theta)^*(f\theta)=(a\theta)(f\theta)\leq (af)\theta=(aa^*f)\theta=(a|\br(a)\wedge f)\Theta.$$
Similarly, $e\Theta\wedge \bd(a\Theta)|a\Theta\leq
(e\wedge\bd(a)|a)\Theta$, for $e\in E$.
\end{proof}

\begin{prop}
\label{prop:ocp-join} Let $\psi:C\rightarrow D$ be an inductive
category prefunctor.  We define
$\Psi:=\bs(\psi):\bs(C)\rightarrow\bs(D)$ to be the same function
on the underlying sets.  Then $\Psi$ is an ordered
$(\wedge,r)$-premorphism with respect to the pseudoproducts in
$\bs(C)$ and $\bs(D)$.
\end{prop}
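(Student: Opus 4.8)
The plan is to verify that $\Psi=\bs(\psi)$ satisfies the three defining conditions: order-preservation, $(\wedge 1)$, and $(\wedge 2)$. Order-preservation is immediate from Lemma~\ref{lem:ordering}, since an inductive category prefunctor is order-preserving by (ICP3). Condition $(\wedge 2)$ is essentially a translation of (ICP2): for $s\in\bs(C)$, we have $(s\Psi)^+=\bd(s\psi)\leq\bd(s)\psi=s^+\psi=s^+\Psi$ in $\bs(D)$, using that $^+$ in the restriction semigroup $\bs(D)$ is $\bd$ in $D$ and that the ordering transfers; dually for $^*$.

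The work is in $(\wedge 1)$, i.e.\ showing $(s\Psi)\otimes(t\Psi)\leq(s\otimes t)\Psi$ for arbitrary $s,t\in\bs(C)$, where now the product $\otimes$ is the pseudoproduct (not the restricted product), and $s,t$ need not be composable in $C$. The natural strategy mirrors the proof of Proposition~\ref{prop:fun-vpre}: factor the pseudoproduct $s\otimes t$ through a restricted product. Set $e=\br(s)\wedge\bd(t)$ (an identity of $C$), and note $s\otimes t=(s|e)\cdot(e|t)$ with $\br(s|e)=e=\bd(e|t)$, so the restricted product on the right is defined. Applying $\psi$ and using (ICP1) for the composable pair $s|e,\ e|t$ gives $(s|e)\psi\otimes(e|t)\psi\leq((s|e)\cdot(e|t))\psi=(s\otimes t)\psi=(s\otimes t)\Psi$. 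So it suffices to show $(s\Psi)\otimes(t\Psi)\leq(s|e)\psi\otimes(e|t)\psi$, and since the pseudoproduct is monotone in each argument (by (Or1) together with monotonicity of $\wedge$, restriction and corestriction — which all hold in an ordered category), it is enough to prove $s\psi\otimes e\psi\leq(s|e)\psi$ and $e\psi\otimes t\psi\leq (e|t)\psi$, treating the two sides separately.

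For the first of these, observe $s|e=s|\br(s)\wedge e=s\otimes e$ by Lemma~\ref{lem:apseudoe} (the corestriction form), since $e\leq\br(s)$ and using the remark that restriction and corestriction agree on identities. Hence $(s|e)\psi=(s\otimes e)\psi$, and condition (ICP4)$'$(a) gives exactly $s\psi\otimes e\psi\leq(s\otimes e)\psi=(s|e)\psi$. Dually, $e|t=e\wedge\bd(t)|t=e\otimes t$ by Lemma~\ref{lem:apseudoe}, and (ICP4)$'$(b) gives $e\psi\otimes t\psi\leq(e\otimes t)\psi=(e|t)\psi$. Combining, $(s\Psi)\otimes(t\Psi)=(s\psi)\otimes(t\psi)=(s\psi\otimes e\psi)\otimes(e\psi\otimes t\psi)\leq (s|e)\psi\otimes(e|t)\psi\leq(s\otimes t)\Psi$, where the first step regroups the pseudoproduct using associativity of $\otimes$ in the restriction semigroup $\bs(D)$ together with $e\psi$ being an identity absorbed on the appropriate side (so that $s\psi\otimes e\psi\otimes t\psi=(s\psi)\otimes(t\psi)$, exactly as in the $e=s^*\otimes t^+$ trick of Proposition~\ref{prop:fun-vpre}).

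The main obstacle is the bookkeeping in that last regrouping: one must be careful that $(s\psi)\otimes(t\psi)$ really does factor as $(s\psi\otimes e\psi)\otimes(e\psi\otimes t\psi)$ in $\bs(D)$ before invoking monotonicity of the pseudoproduct, and that the pseudoproduct is genuinely monotone in each variable (this needs (Or1), (Or2), monotonicity of $\wedge$ on identities, and Lemma~\ref{lem:rest-ord} for the restriction/corestriction factors). Everything else is a direct application of (ICP4)$'$, (ICP1), (ICP2) and Lemma~\ref{lem:apseudoe}. Finally, one should remark that $\Psi$ is order-preserving, so it is an \emph{ordered} $(\wedge,r)$-premorphism, completing the proof.
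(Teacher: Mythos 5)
Your overall strategy for $(\wedge 1)$ --- factor $s\otimes t$ as the restricted product $(s|e)\cdot(e|t)$ with $e=\br(s)\wedge\bd(t)$, push each factor through $\psi$ via Lemma~\ref{lem:apseudoe} and (ICP4)$'$, then apply (ICP1) --- is workable, and your treatment of $(\wedge 2)$ and of order-preservation matches the paper. But there is a genuine gap at exactly the step you flag as ``the main obstacle'': the equality $(s\psi)\otimes(t\psi)=(s\psi\otimes e\psi)\otimes(e\psi\otimes t\psi)$. This is \emph{not} ``exactly as in the $e=s^*\otimes t^+$ trick of Proposition~\ref{prop:fun-vpre}'': there the inserted projection is literally $s^*\otimes t^+$ of the very semigroup in which the product is computed, so absorption is automatic. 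Here the inserted projection is $e\psi$, the \emph{image} of $s^*\otimes t^+$, and for an arbitrary projection $f$ of $\bs(D)$ one only has $a\otimes f\otimes b\leq a\otimes b$ --- an inequality pointing the wrong way for your chain, so ``$e\psi$ being an identity absorbed on the appropriate side'' is not a justification. The equality does hold, but only because $e\psi\geq\br(s\psi)\wedge\bd(t\psi)$, and that needs an argument you have not given: by (ICP2), $\br(s\psi)\wedge\bd(t\psi)\leq\br(s)\psi\wedge\bd(t)\psi$, and by (ICP4)$'$ applied to the pair of identities $\br(s),\bd(t)$ (whose images are identities of $D$, so their pseudoproduct is their meet), $\br(s)\psi\wedge\bd(t)\psi\leq(\br(s)\wedge\bd(t))\psi=e\psi$; then $s\psi\otimes e\psi\otimes t\psi=s\psi\otimes t\psi$ follows since projections commute. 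With that paragraph inserted, your proof closes.

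For comparison, the paper sidesteps this issue entirely: it quotes the computation between \eqref{first-line} and \eqref{last-line} in the proof of Lemma~\ref{lem:comp-icp}, which starts from the definition of $(s\psi)\otimes(t\psi)$ as a restricted product taken at the identity $\br(s\psi)\wedge\bd(t\psi)$ computed in $D$, and then only ever needs inequalities in the correct direction, obtained from Lemma~\ref{lem:rest-ord} together with (ICP2), (ICP4) and (ICP1). So the paper's route needs no equality of the above kind, whereas yours does; either repair your regrouping step as indicated or switch to the paper's corestriction/restriction argument.
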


\begin{proof}
($\wedge 1$) Let $s,t\in\bs(C)$.  Then $(s\Psi)\otimes
(t\Psi)=(s\psi)\otimes (t\psi)\leq (s\otimes t)\psi=(s\otimes
t)\Psi$, by an argument identical to that found between lines
\eqref{first-line} and \eqref{last-line} in
Lemma~\ref{lem:comp-icp}.

($\wedge 2$) We have:
$(s\Psi)^+=\bd(s\psi)\leq\bd(s)\psi=s^+\Psi$.  Similarly,
$(s\Psi)^*\leq s^*\Psi$.

Finally, it follows from Lemma~\ref{lem:ordering} that $\Psi$ is
order-preserving.
\end{proof}

Once again, it is easy to see that if $\theta:S\rightarrow T$ is
an ordered $(\wedge,r)$-premorphism of restriction semigroups and
$\psi:C\rightarrow D$ is an inductive category prefunctor, then
$\bs(\bc(\theta))=\theta$ and $\bc(\bs(\psi))=\psi$. Furthermore,
if $\theta':T\rightarrow T'$ is another ordered
$(\wedge,r)$-premorphism of restriction semigroups, and
$\psi':D\rightarrow D'$ is another inductive category prefunctor
of inductive categories, then
$\bc(\theta\theta')=\bc(\theta)\bc(\theta')$ and
$\bs(\phi\phi')=\bs(\phi)\bs(\phi')$.  We have therefore proved
Theorem~\ref{thm:wEajoin-indcat}.

\subsection{Strong $\boldsymbol{(\wedge,r)}$-premorphisms}

As we noted at the beginning of the section, in order to make the connection with the inverse case, we must now
introduce the intermediate step of \emph{strong}
$(\wedge,r)$-premorphisms between restriction semigroups.  Our
goal for the remainder of this section is the proof of the following corollary to Theorem~\ref{thm:wEajoin-indcat}:

\begin{thm}
\label{thm:wEajoin-indcat-strong} The category of restriction
semigroups and strong $(\wedge,r)$-pre\-morphisms is isomorphic to
the category of inductive categories and strong inductive category
prefunctors.
\end{thm}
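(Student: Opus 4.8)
The plan is to follow exactly the template established for Theorems~\ref{thm:wEa-join} and~\ref{thm:wEajoin-indcat}: define the notion of a \emph{strong $(\wedge,r)$-premorphism} of restriction semigroups and the corresponding notion of a \emph{strong inductive category prefunctor}, check that each of these classes is closed under composition (so that we genuinely have two categories), verify that they are obtained from the classes in Theorem~\ref{thm:wEajoin-indcat} by imposing \emph{corresponding} extra axioms, and then invoke the bijections $\bc$ and $\bs$ from that theorem. Since Theorem~\ref{thm:wEajoin-indcat} already gives a category isomorphism restricting the objects correctly, the only work is to show that the extra `strength' condition on $\theta$ translates under $\bc$ into precisely the extra condition defining a strong prefunctor, and conversely under $\bs$; the isomorphism statement then follows by restriction. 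In particular $\bs(\bc(\theta))=\theta$, $\bc(\bs(\psi))=\psi$ and functoriality of $\bc,\bs$ are inherited for free from the proof of Theorem~\ref{thm:wEajoin-indcat}.

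The key technical point is the definition of `strong'. Guided by the inverse case (where the extra hypothesis needed to recover a $(\wedge,i)$-premorphism from a $(\wedge,r)$-premorphism is that $\theta$ respect the unary operations \emph{on the nose}, i.e.\ $(s\theta)^+=s^+\theta$ and $(s\theta)^*=s^*\theta$, strengthening $(\wedge 2)$ to an equality), I expect the definition to be: an ordered $(\wedge,r)$-premorphism $\theta\colon S\to T$ is \emph{strong} if in addition $(st)\theta\leq(s^+\theta)(st)\theta$ and dually — or, more likely, a condition of the form $(s\theta)(t\theta)=\bigl((st)\theta\bigr)^+(s\theta)(t\theta)$, forcing the pseudoproduct inequality in (ICP1) to be controlled by domains/ranges in a uniform way. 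On the category side, a \emph{strong inductive category prefunctor} $\psi\colon C\to D$ will be an inductive category prefunctor satisfying the translated condition, e.g.\ $(s\psi)\otimes(t\psi)=\bd\bigl((s\cdot t)\psi\bigr)\,\bigl|\,(s\psi)\otimes(t\psi)$ whenever $\exists s\cdot t$, equivalently that the element $(s\psi)\otimes(t\psi)$ is obtained from $(s\cdot t)\psi$ purely by restriction to its own domain. The precise formulation should be chosen so that the computation in Proposition~\ref{prop:meetw-ipc} for (ICP1), namely $(s\Theta)\otimes(t\Theta)=(s\theta)(t\theta)\leq(st)\theta$, upgrades verbatim to the `strong' equality, and so that the converse passage mirrors the argument of Proposition~\ref{prop:ocp-join}.

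Concretely, the steps I would carry out, in order, are: (1) state Definition of strong $(\wedge,r)$-premorphism, adding one axiom to Definition~\ref{defn:meet-w}; (2) state Definition of strong inductive category prefunctor, adding the matching axiom to Definition~\ref{defn:ord-cat-pre}; (3) prove a short lemma that strong $(\wedge,r)$-premorphisms compose to strong $(\wedge,r)$-premorphisms (using Lemma~\ref{lem:meet-props} and order-preservation, exactly as in the non-strong case, then checking the extra axiom survives composition); (4) prove the analogous closure-under-composition lemma for strong prefunctors, reusing the chain of inequalities \eqref{first-line}--\eqref{last-line} from Lemma~\ref{lem:comp-icp} and upgrading the relevant inequality to an equality via the strength hypothesis; (5) prove the analogue of Proposition~\ref{prop:meetw-ipc}: if $\theta$ is a strong $(\wedge,r)$-premorphism then $\bc(\theta)$ is a strong inductive category prefunctor — here the only new content over Proposition~\ref{prop:meetw-ipc} is checking the strength axiom, which is a direct rewrite of the strength axiom for $\theta$ using $\bd(x)=x^+$, $\br(x)=x^*$ and the fact that restriction in $\bc(S)$ is multiplication by an idempotent (as recorded after Theorem~\ref{thm:wEa-ic}); (6) prove the analogue of Proposition~\ref{prop:ocp-join} in the same way; (7) observe that $\bc$ and $\bs$ are mutually inverse bijections on these restricted classes and are functorial, so they give the desired category isomorphism, and note that this is literally Theorem~\ref{thm:wEajoin-indcat} with one axiom appended on each side, hence a ``corollary'' in the sense promised.

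The main obstacle will be pinning down the strength axiom so that it is simultaneously (a) the right extra hypothesis for the eventual inverse-case corollary in Section~\ref{sec:inv} (it must, together with $(\wedge 1)$ and $(\wedge 2)$, be equivalent in the inverse case to $(s\theta)^{-1}=s^{-1}\theta$ of Definition~\ref{defn:wedge-i}), (b) preserved under composition without extra fuss, and (c) translatable cleanly under $\bc$/$\bs$. Everything after that choice is a routine adaptation of Propositions~\ref{prop:meetw-ipc} and~\ref{prop:ocp-join} and Lemma~\ref{lem:comp-icp}; the serious thinking is entirely in the definitions, not in the verifications.
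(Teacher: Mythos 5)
Your high-level template --- restrict the isomorphism of Theorem~\ref{thm:wEajoin-indcat} by appending one matching axiom on each side, check closure under composition, and re-run the transfer via $\bc$ and $\bs$ --- is exactly the paper's plan, but the terms ``strong $(\wedge,r)$-premorphism'' and ``strong inductive category prefunctor'' are fixed by the paper's definitions, and the candidate axioms you float are not them; you yourself locate the ``serious thinking'' in this choice, and it is here that the proposal has a genuine gap. The paper's semigroup-side axiom is $(\wedge 1)'$: $(s\theta)(t\theta)=(s\theta)^+(st)\theta=(st)\theta(t\theta)^*$. Your preferred guess $(s\theta)(t\theta)=\bigl((st)\theta\bigr)^+(s\theta)(t\theta)$ is vacuous: by $(\wedge 1)$ we have $(s\theta)(t\theta)\leq(st)\theta$, hence $((s\theta)(t\theta))^+\leq((st)\theta)^+$ and the identity holds for \emph{every} $(\wedge,r)$-premorphism, so ``strong'' would add nothing --- and then the inverse-case bridge of Section~\ref{sec:inv} fails, since an ordered $(\wedge,r)$-premorphism between inverse semigroups need not satisfy $(s\theta)^{-1}=s^{-1}\theta$ (Lemma~\ref{lem:strongpre-inverse2} is precisely about the strong ones). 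Your category-side guess has the opposite defect: the restriction $\bd\bigl((s\cdot t)\psi\bigr)\big|\bigl((s\psi)\otimes(t\psi)\bigr)$ is only defined when $\bd\bigl((s\cdot t)\psi\bigr)\leq\bd\bigl((s\psi)\otimes(t\psi)\bigr)$, i.e.\ when the two domains coincide, and then Lemma~\ref{lem:so-useful} forces $(s\psi)\otimes(t\psi)=(s\cdot t)\psi$, so your ``strong'' prefunctors would just be functors on composable pairs --- far too strong to capture the ordered groupoid premorphisms needed later. The paper's axiom is (ICP5): $\bd((s\psi)\otimes(t\psi))=\bd(s\psi)\wedge\bd((s\otimes t)\psi)$ and dually for ranges, imposed for \emph{all} pairs via the pseudoproduct, not only composable ones; a composable-pairs-only condition, as in your formulation, would not suffice to show that $\bs(\psi)$ satisfies $(\wedge 1)'$ for arbitrary products.

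Two further places where the verifications are not the routine rewrites you predict. The direction ``$\psi$ strong prefunctor $\Rightarrow\bs(\psi)$ strong premorphism'' is not a verbatim upgrade of Proposition~\ref{prop:ocp-join}: the paper applies Lemma~\ref{lem:so-useful}, comparing the domains of $(s\Psi)\otimes(t\Psi)$ and $(s\Psi)^+\otimes(s\otimes t)\Psi$ below the common upper bound $(s\otimes t)\Psi$. And your step (4) --- closure of strong prefunctors under composition by upgrading the inequality chain of Lemma~\ref{lem:comp-icp} to an equality --- is exactly the step the paper avoids: Lemma~\ref{lem:strong-comp} is proved indirectly, by transporting to the semigroup side via Propositions~\ref{prop:sicp-strong} and~\ref{prop:strong-sicp} together with Lemma~\ref{lem:comp-strong}, and pulling back with $\bc(\bs(\psi_1\psi_2))=\psi_1\psi_2$; a direct category-side argument is not straightforward, so with your ordering (closure first, transfer afterwards) you would likely be stuck at this point even after fixing the definitions.
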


Once again, it only remains to deal with the `arrows' part.

In the study of partial actions of one-sided restriction
semigroups \cite{gh}, it is the notion of a `strong premorphism'
which has proved most useful.  
We make the following definition in the two-sided case:

\begin{defn}
\label{defn:strongmeetpre}
Let $S$ and $T$ be restriction semigroups.  A
$(\wedge,r)$-pre\-morphism $\theta:S\rightarrow T$ is called
\emph{strong} if
\begin{enumerate}
    \item[$(\wedge 1)'$] $(s\theta)(t\theta)=(s\theta)^+(st)\theta=(st)\theta(t\theta)^*$.
\end{enumerate}
\end{defn}

It is clear that condition $(\wedge 1)$ follows from condition
$(\wedge 1)'$, by \eqref{eq:npo}.  Furthermore, we deduce the
following from \cite[Lemma~2.10(4)]{gh}:

\begin{lem}
A strong $(\wedge,r)$-premorph\-ism $\theta:S\rightarrow T$
between restriction semigroups is order-preserving.
\end{lem}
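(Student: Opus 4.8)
The plan is to reduce this to the corresponding one-sided statement, \cite[Lemma~2.10(4)]{gh}. If we forget the operation $^*$, then $S$ and $T$ become \emph{left} restriction semigroups with respect to the same distinguished semilattices, $\theta$ becomes a left $(\wedge,r)$-premorphism (it satisfies $(\wedge 1)$ and the $^+$-half of $(\wedge 2)$), and the first of the two equalities in $(\wedge 1)'$, namely $(s\theta)(t\theta)=(s\theta)^+(st)\theta$, is precisely the \emph{strong} condition for one-sided $(\wedge,r)$-premorphisms used in \cite{gh}. The one point that makes this reduction legitimate is that the natural partial order on a (two-sided) restriction semigroup depends only on $^+$: by \eqref{eq:npo} and \eqref{eq:npo2}, $a\leq b\Longleftrightarrow a=a^+b$, which is exactly the natural partial order of the underlying left restriction semigroup. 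Thus \cite[Lemma~2.10(4)]{gh} applies verbatim and gives that $\theta$ is order-preserving.

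For completeness I would also indicate the direct computation. Suppose $s\leq t$, so that $s=s^+t$ by \eqref{eq:npo2}. Since $s^+$ is an idempotent of $E$, the element $s^+\theta$ lies in $F$ and hence equals $(s^+\theta)^+$; this is exactly Lemma~\ref{lem:meet-props}, whose proof uses only condition $(\wedge 2)$ and so applies to any $(\wedge,r)$-premorphism, ordered or not. Applying the equality $(s\theta)(t\theta)=(s\theta)^+(st)\theta$ of $(\wedge 1)'$ to the pair $s^+, t$ and using $s^+t=s$ yields $(s^+\theta)(t\theta)=(s^+\theta)^+(s^+t)\theta=(s^+\theta)(s\theta)$. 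Condition $(\wedge 2)$ gives $(s\theta)^+\leq s^+\theta$ in the semilattice $F$, so a short calculation in $F$ shows that $s^+\theta$ is a left identity for $s\theta$, i.e.\ $(s^+\theta)(s\theta)=(s^+\theta)(s\theta)^+(s\theta)=(s\theta)^+(s\theta)=s\theta$. Hence $s\theta=(s^+\theta)(t\theta)$, and since $s^+\theta\in F$, \eqref{eq:npo} gives $s\theta\leq t\theta$, as required.

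I do not anticipate any real difficulty here: the result is essentially a bookkeeping consequence of $(\wedge 1)'$, $(\wedge 2)$ and the $^+$-description \eqref{eq:npo}--\eqref{eq:npo2} of the natural partial order. If anything warrants being called the main obstacle, it is the slight circularity with Lemma~\ref{lem:meet-props} (whose statement presupposes the premorphism be ordered) and the need to check that the hypotheses transfer faithfully to the one-sided setting of \cite{gh} so that \cite[Lemma~2.10(4)]{gh} can be invoked --- both of which, as indicated above, cause no trouble.
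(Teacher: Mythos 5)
Your proposal is correct and takes essentially the same route as the paper, which simply deduces the lemma from \cite[Lemma~2.10(4)]{gh}; your observation that by \eqref{eq:npo}--\eqref{eq:npo2} the natural partial order depends only on $^+$, so the two-sided hypotheses transfer faithfully to the one-sided setting, is exactly what makes that citation legitimate. Your supplementary direct computation is also sound (and, as you note, the proof of Lemma~\ref{lem:meet-props} uses only $(\wedge 2)$, so there is no circularity), giving a self-contained argument that the paper does not spell out.
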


We therefore drop all explicit mention of order-preservation from
here on.

\begin{lem}
\label{lem:comp-strong} The composition of two strong
$(\wedge,r)$-premorphisms is a strong $(\wedge,r)$-premorphism.
\end{lem}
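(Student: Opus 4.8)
The plan is to let $\theta:S\rightarrow T$ and $\theta':T\rightarrow T'$ be strong $(\wedge,r)$-premorphisms and to show that $\theta\theta'$ again satisfies conditions $(\wedge 1)'$ and $(\wedge 2)$; since a strong $(\wedge,r)$-premorphism is order-preserving, $\theta\theta'$ will then automatically be a $(\wedge,r)$-premorphism, and verifying $(\wedge 1)'$ on top of this is what makes it strong. Condition $(\wedge 2)$ is the easy part: from $(\wedge 2)$ for $\theta'$ and then $(\wedge 2)$ for $\theta$ together with order-preservation of $\theta'$, one gets $((s\theta\theta')^+\leq (s\theta)^+\theta'\leq (s^+\theta)\theta'=s^+(\theta\theta')$, and dually for $^*$. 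So the real work is $(\wedge 1)'$.

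For $(\wedge 1)'$, I would compute $(s\theta\theta')(t\theta\theta')$ directly. Write $u=s\theta$, $v=t\theta$ in $T$. Applying $(\wedge 1)'$ for $\theta'$ to the pair $u,v$ gives $(u\theta')(v\theta')=(u\theta')^+(uv)\theta'=(uv)\theta'(v\theta')^*$. Now substitute the strong identity for $\theta$: $uv=(s\theta)(t\theta)=(s\theta)^+(st)\theta=(st)\theta(t\theta)^*$. Feeding $uv=(s\theta)^+(st)\theta$ into the first expression, and using Lemma~\ref{lem:+} to handle $(s\theta)^+$ (which lies in $F$, hence is a left identity that can be absorbed), one should be able to rewrite $((s\theta)^+(st)\theta)\theta'$ in terms of $(st)\theta\theta'$ up to a $^+$-prefix; the point is that $\theta'$ applied to a product $e w$ with $e\in F$ an idempotent can be controlled because $\theta'$ is a $(\wedge,r)$-premorphism and $e\theta'$ is an idempotent of $T'$ (by Lemma~\ref{lem:meet-props} applied to $\theta'$ viewed as an ordered $(\wedge,r)$-premorphism). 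The target is to show $(s\theta\theta')(t\theta\theta')=(s\theta\theta')^+(st)\theta\theta'$, and symmetrically $=(st)\theta\theta'(t\theta\theta')^*$.

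The main obstacle I anticipate is bookkeeping with the idempotent prefixes: $\theta$ does not preserve $^+$ on the nose (only $(s\theta)^+\leq s^+\theta$), so when I push $(s\theta)^+(st)\theta$ through $\theta'$ I cannot simply say the $(s\theta)^+$ part maps to $(s\theta\theta')^+$. The resolution should be that after applying $\theta'$ one reattaches the correct prefix using the strong identity $(\wedge 1)'$ for $\theta'$ once more, namely $(u\theta')(v\theta')=(u\theta')^+(uv)\theta'$ already has the prefix $(u\theta')^+=(s\theta\theta')^+$ built in, so I should work from that side rather than trying to commute the prefix through. Concretely: $(s\theta\theta')(t\theta\theta')=(u\theta')(v\theta')=(u\theta')^+(uv)\theta'=(s\theta\theta')^+\big((s\theta)(t\theta)\big)\theta'=(s\theta\theta')^+\big((s\theta)^+(st)\theta\big)\theta'$, and then I need the auxiliary fact that $\big(e\cdot w\big)\theta'$ with $e=(s\theta)^+\in F$ satisfies $(s\theta\theta')^+\cdot(ew)\theta'=(s\theta\theta')^+\cdot(w\theta')$ — i.e. that prefixing $w$ by the idempotent $e$ inside $\theta'$ is invisible after multiplying on the left by $(s\theta\theta')^+$. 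This last identity is where I expect to spend the most care; it should come from $(\wedge 1)$ applied to the pair $e,w$ (giving $(e\theta')(w\theta')\leq (ew)\theta'$, with $(e\theta')$ an idempotent above $(s\theta\theta')^+$) combined with the reverse-direction consequence of $(\wedge 1)'$ for $\theta'$, pinned down using Lemma~\ref{lem:so-useful} or \eqref{eq:npo2}. Once this auxiliary identity is in hand, the computation closes, and the $(t\theta\theta')^*$ half is entirely dual.
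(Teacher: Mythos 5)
Your proposal is correct and follows essentially the same route as the paper: apply $(\wedge 1)'$ for the outer map to get the prefix $(s\theta\theta')^+$, substitute $(s\theta)(t\theta)=(s\theta)^+(st)\theta$ via $(\wedge 1)'$ for the inner map, and then dispose of the idempotent prefix exactly by your ``auxiliary fact''. The paper proves that fact just as you anticipate, by applying $(\wedge 1)'$ for $\theta'$ to the pair $a=(s\theta)^+$, $b=(st)\theta$ (so that $(a\theta')(b\theta')=(a\theta')(ab)\theta'$, since $a\theta'$ lies in the semilattice) and absorbing $a\theta'$ on the left using $(s\theta\theta')^+\leq (s\theta)^+\theta'$ together with \eqref{eq:npo2}; Lemma~\ref{lem:so-useful} is not needed.
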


\begin{proof}
Let $\theta_1:U\rightarrow V$ and $\theta_2:V\rightarrow W$ be
strong $(\wedge,r)$-premorphisms of restriction semigroups $U$,
$V$ and $W$. Condition $(\wedge 2)$ is immediate:
$(s\theta_1\theta_2)^+\leq ((s\theta_1)^+)\theta_2\leq
s^+\theta_1\theta_2.$ Similarly, $(s\theta_1\theta_2)^*\leq
s^*\theta_1\theta_2$.

For $(\wedge 1)'$, we have
$(s\theta_1\theta_2)(t\theta_1\theta_2)=(s\theta_1\theta_2)^+((s\theta_1)(t\theta_1))\theta_2$,
using $(\wedge 1)'$ for $\theta_2$.  Now,
\begin{align*}
(s\theta_1\theta_2)^+\leq ((s\theta_1)^+)\theta_2
&\Longrightarrow (s\theta_1\theta_2)^+\leq ((s\theta_1)^+)\theta_2^+ \\
&\Longrightarrow
(s\theta_1\theta_2)^+=(s\theta_1\theta_2)^+((s\theta_1)^+)\theta_2^+,
\end{align*}
by \eqref{eq:npo2}, so
\begin{align*}
(s\theta_1\theta_2)(t\theta_1\theta_2)
&=(s\theta_1\theta_2)^+((s\theta_1)^+)\theta_2^+((s\theta_1)(t\theta_1))\theta_2 \\
&=(s\theta_1\theta_2)^+((s\theta_1)^+)\theta_2^+((s\theta_1)^+(st)\theta_1)\theta_2 \\
\intertext{(using $(\wedge 1)'$ for $\theta_1$)}
&=(s\theta_1\theta_2)^+((s\theta_1)^+)\theta_2(st)\theta_1\theta_2 \\
\intertext{(applying
$(a\theta_2)(b\theta_2)=(a\theta_2)^+(ab)\theta_2$ with
$a=(s\theta_1)^+$ and $b=(st)\theta_1$)}
&=(s\theta_1\theta_2)^+(st)\theta_1\theta_2,
\end{align*}
since $(s\theta_1\theta_2)^+\leq ((s\theta_1)^+)\theta_2$.
Similarly,
$(s\theta_1\theta_2)(t\theta_1\theta_2)=(st)\theta_1\theta_2(s\theta_1\theta_2)^*$.
\end{proof}

Thus restriction semigroups together with strong
$(\wedge,r)$-pre\-morph\-isms form a category.

We now define a function between inductive categories which will
correspond to a strong $(\wedge,r)$-premorphism between
restriction semigroups.

\begin{defn}
\label{defn:sicp} An inductive category prefunctor
$\psi:C\rightarrow D$ will be called \emph{strong} if
\begin{enumerate}
\item[(ICP5)]   \begin{enumerate}
                                        \item[(a)] $\bd((s\psi)\otimes (t\psi))=\bd(s\psi)\wedge\bd((s\otimes t)\psi)$;
                                        \item[(b)] $\br((s\psi)\otimes (t\psi))=\br((s\otimes t)\psi)\wedge\br(t\psi)$.
                                \end{enumerate}
\end{enumerate}
\end{defn}

We note the following:

\begin{lem}
Condition (ICP4) follows from (ICP5).
\end{lem}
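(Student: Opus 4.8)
The plan is to show that an inductive category prefunctor $\psi:C\rightarrow D$ satisfying (ICP5) automatically satisfies (ICP4)(a) (the argument for (b) being dual). The strategy is to use Lemma~\ref{lem:apseudoe} to translate (ICP4)(a) into the compact form (ICP4)$'$(a), namely $a\psi\otimes f\psi\leq(a\otimes f)\psi$ for $a\in C$, $f\in C_o$, and then to exhibit both sides as elements of $D$ lying below a common element and sharing the same domain (or range), so that Lemma~\ref{lem:so-useful} forces equality of the relevant corestriction-type expressions, from which the inequality (ICP4)$'$(a) follows. The key observation is that (ICP1) already gives $a\psi\otimes f\psi\leq(a\otimes f)\psi$ as soon as $\exists\,a\cdot f$ in $C$; the point of (ICP5) is to handle the general case where the restricted product need not be defined, by controlling the domain of $a\psi\otimes f\psi$.

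Concretely, I would first note that by Lemma~\ref{lem:apseudoe}, $a\otimes f=a|\br(a)\wedge f$, so $\bd(a\otimes f)=\bd(a)$; hence by (ICP2), $\bd((a\otimes f)\psi)\leq\bd(a)\psi$. On the other hand, (ICP5)(a) (applied with $s=a$, $t=f$) gives $\bd((a\psi)\otimes(f\psi))=\bd(a\psi)\wedge\bd((a\otimes f)\psi)$. Now one also has $(a\psi)\otimes(f\psi)\leq(a\otimes f)\psi$ is exactly what we want to prove, so instead I would argue as follows: both $(a\psi)\otimes(f\psi)$ and the corestriction $((a\otimes f)\psi)\,\big|\,\bd((a\psi)\otimes(f\psi))$ lie below $(a\otimes f)\psi$ — the first because $(a\psi)\otimes(f\psi)=a\psi|\br(a\psi)\wedge f\psi$ (Lemma~\ref{lem:apseudoe} again) is a corestriction of $a\psi$ and we need to compare it with $(a\otimes f)\psi$, for which (ICP1)/(ICP3) plus the fact that $a\leq$ its own corestrictions should give the needed bound; the second by definition of corestriction. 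If I can show these two have the same domain, Lemma~\ref{lem:so-useful} finishes it, and (ICP5)(a) is precisely the statement computing that common domain. The cleanest route is probably: using that $\cdot$ and $\otimes$ agree when $\cdot$ is defined, reduce $(a\psi)\otimes(f\psi)$ via a restricted product $(a\psi|c)\cdot(c|f\psi)$ where $c=\br(a\psi)\wedge f\psi$, apply (ICP1) to the corresponding restricted product in $C$ after corestricting $a$ and $f$ appropriately, and then invoke (ICP5)(a) to match domains.

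The main obstacle I anticipate is the bookkeeping of domains and ranges through the corestrictions: one must verify that the corestriction $a|(\br(a)\wedge f)$ in $C$ has range $\br(a)\wedge f$, that $f|(a\otimes f)$-type expressions behave well, and that the inequalities coming from (ICP4)$'$ versus the equalities coming from (ICP5) are correctly lined up — in particular, that the element $(a\otimes f)\psi$ really does dominate $(a\psi)\otimes(f\psi)$ before one can apply Lemma~\ref{lem:so-useful}. That domination is the crux: without it, equality of domains is not enough. I expect this to follow by applying (ICP1) to the restricted-product decomposition of $a\otimes f$ in $C$ — write $a\otimes f=(a|c_0)\cdot(c_0|f)$ with $c_0=\br(a)\wedge f$, so $(a|c_0)\psi\otimes(c_0|f)\psi\leq(a\otimes f)\psi$ by (ICP1) — and then comparing $(a\psi)\otimes(f\psi)$ with $(a|c_0)\psi\otimes(c_0|f)\psi$ using (ICP3), (ICP2) and Lemma~\ref{lem:rest-ord}, exactly as in the chain of inequalities in the proof of Lemma~\ref{lem:comp-icp}. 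Once both sides sit below $(a\otimes f)\psi$ and (ICP5)(a) equates their domains, Lemma~\ref{lem:so-useful} yields (ICP4)$'$(a), hence (ICP4)(a); the dual argument with (ICP5)(b) and ranges gives (ICP4)(b).
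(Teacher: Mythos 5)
Your toolkit (Lemma~\ref{lem:apseudoe}, Lemma~\ref{lem:so-useful}, the reduction to (ICP4)$'$) is the right one, but the argument is circular at exactly the point you flag as the crux. You set up Lemma~\ref{lem:so-useful} with $(a\otimes f)\psi$ as the common upper bound, which obliges you first to prove the domination $a\psi\otimes f\psi\leq (a\otimes f)\psi$ --- but by Lemma~\ref{lem:apseudoe} that inequality \emph{is} (ICP4)$'$(a), the very statement to be proved. Your proposed route to it, repeating the chain of inequalities from the proof of Lemma~\ref{lem:comp-icp}, is not available: that chain explicitly invokes (ICP4) at the step comparing $x\psi_2|\br(x\psi_2)\wedge\bd(y)\psi_2$ with $(x|\br(x)\wedge\bd(y))\psi_2$, and in your specialisation ($t=f$ an identity) the needed comparison is literally $a\psi|\br(a\psi)\wedge f\psi\leq (a|\br(a)\wedge f)\psi$ again. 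Conditions (ICP1)--(ICP3) and Lemma~\ref{lem:rest-ord} alone cannot supply it; what is missing is control of $\br(a\psi)\wedge f\psi$ against $\br((a\otimes f)\psi)$, which is exactly what (ICP5)(b) provides. (Two smaller slips: $\bd(a\otimes f)=\bd(a)$ is false in general --- corestriction shrinks the domain, and (Or2) only gives $\bd(a\otimes f)\leq\bd(a)$; and corestrictions of $a$ lie \emph{below} $a$, not above.)

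The paper avoids the circle by never using $(a\otimes f)\psi$ as the common upper bound. Via \eqref{eq:npo2} in ${\bf S}(D)$, the desired inequality is equivalent to the equality $a\psi\otimes f\psi=(a\otimes f)\psi\otimes\left(\br(a\psi)\wedge f\psi\right)$; both sides of \emph{this} are easily seen to lie below $a\psi$ (the left side is a corestriction of $a\psi$, the right side is below $(a\otimes f)\psi\leq a\psi$ by (ICP3)), and their ranges are shown to coincide using (ICP5)(b) --- note, the range half, not (ICP5)(a), is what drives part (a) --- together with Lemma~\ref{lem:apseudoe}; then Lemma~\ref{lem:so-useful} with common bound $a\psi$ finishes. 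A domain-based variant using (ICP5)(a) can be made to work (compare $a\psi\otimes f\psi$ with the restriction $\bd(a\psi\otimes f\psi)|(a\otimes f)\psi$, which is defined because (ICP5)(a) gives $\bd(a\psi\otimes f\psi)\leq\bd((a\otimes f)\psi)$, again with $a\psi$ as the common bound), but as written your proof does not close.
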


\begin{proof}
Let $\psi:C\rightarrow D$ be a function between inductive categories which satisfies conditions (ICP1)--(ICP3), plus (ICP5).  We consider (ICP4)$'$(a) and observe that, for $a\in C$ and $f\in C_o$,
\begin{align}
a\psi\otimes f\psi\leq (a\otimes f)\psi\textup{ in }D	&\Longleftrightarrow a\psi\otimes f\psi\leq (a\otimes f)\psi\textup{ in }{\bf S}(D) \notag \\
																											&\Longleftrightarrow a\psi\otimes f\psi=(a\otimes f)\psi\otimes\br(a\psi\otimes f\psi) \notag \\
																											&\Longleftrightarrow a\psi\otimes f\psi=(a\otimes f)\psi\otimes\left(\br(a\psi)\wedge f\psi\right), \label{eq:latter}
\end{align}
using Lemma~\ref{lem:apseudoe}.  We will use Lemma~\ref{lem:so-useful}, in conjunction with (ICP5), to demonstrate the equality \eqref{eq:latter}.  We note first of all that $a\psi\otimes f\psi\leq a\psi$ and also that
$$(a\otimes f)\psi\otimes\left(\br(a\psi)\wedge f\psi\right)\leq (a\otimes f)\psi\leq a\psi,$$
since $a\otimes f\leq a$ and $\psi$ is order-preserving.  It remains to show that each side of \eqref{eq:latter} has the same range.  On the one hand, we have
$$\br(a\psi\otimes f\psi)=\br\left((a\otimes f)\psi\right)\wedge\br(f\psi)=\br\left((a\otimes f)\psi\right)\wedge f\psi,$$
by (ICP5)(b).  On the other, using Lemma~\ref{lem:apseudoe},
\begin{align*}
(a\otimes f)\psi\otimes\left(\br(a\psi)\wedge f\psi\right)	&=\left.(a\otimes f)\psi\right|\br\left((a\otimes f)\psi\right)\wedge\br(a\psi)\wedge f\psi, \\
																														&=\left.(a\otimes f)\psi\right|\br\left((a\otimes f)\psi\right)\wedge f\psi,
\end{align*}
since $\br\left((a\otimes f)\psi\right)\leq\br(a\psi)$.  Thus
$$\br\left((a\otimes f)\psi\otimes\left(\br(a\psi)\wedge f\psi\right)\right)=\br\left((a\otimes f)\psi\right)\wedge f\psi=\br(a\psi\otimes f\psi),$$
as required.  We conclude that (ICP4)$'$(a) holds.  Part (b) follows similarly.
\end{proof}

Thus a strong inductive category prefunctor may be regarded as a function defined by conditions (ICP1)--(ICP3) and (ICP5) only.

We defer the proof that the composition of two strong inductive
category prefunctors is a strong inductive category prefunctor
until after the following propositions, which prove that the functions of Definitions~\ref{defn:strongmeetpre} and~\ref{defn:sicp} are once again connected in the desired way:

\begin{prop}
\label{prop:strong-sicp} Let $\theta:S\rightarrow T$ be a strong
$(\wedge,r)$-premorphism of restriction semigroups.  We define
$\Theta:=\bc(\theta):\bc(S)\rightarrow\bc(T)$ to be the same
function on the underlying sets.  Then $\Theta$ is a strong
inductive category prefunctor with respect to the restricted
products in $\bc(S)$ and $\bc(T)$.
\end{prop}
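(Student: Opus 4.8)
The plan is to show that $\Theta = \bc(\theta)$ is an inductive category prefunctor and then verify the extra condition (ICP5). The first half is free: since a strong $(\wedge,r)$-premorphism is in particular an (ordered) $(\wedge,r)$-premorphism (condition $(\wedge 1)$ follows from $(\wedge 1)'$ via \eqref{eq:npo}, and order-preservation was noted to be automatic), Proposition~\ref{prop:meetw-ipc} immediately gives that $\Theta$ satisfies (ICP1)--(ICP4). So the real content is (ICP5).

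For (ICP5)(a), I would unwind both sides using the dictionary of Theorem~\ref{thm:wEa-ic}: in $\bc(T)$ the pseudoproduct $\otimes$ is the semigroup multiplication in $T$, $\bd(x) = x^+$, $\br(x) = x^*$, and meets of identities are products in the semilattice $F$. So the left-hand side $\bd((s\Theta)\otimes(t\Theta))$ becomes $((s\theta)(t\theta))^+$, while the right-hand side $\bd(s\Theta)\wedge\bd((s\otimes t)\Theta)$ becomes $(s\theta)^+\cdot ((st)\theta)^+$ (recalling that in $\bc(S)$ the product $s\cdot t$, when defined, equals $st$ in $S$). Thus (ICP5)(a) reduces to the identity
\begin{equation*}
((s\theta)(t\theta))^+ = (s\theta)^+\,((st)\theta)^+ \qquad\text{in }T,
\end{equation*}
whenever $\exists s\cdot t$ in $\bc(S)$, i.e.\ $s^* = t^+$. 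This should drop straight out of the defining equation $(\wedge 1)'$: from $(s\theta)(t\theta) = (s\theta)^+(st)\theta$ one applies $^+$ to both sides and uses Lemma~\ref{lem:+} (namely $(xy)^+ = (xy^+)^+$) together with the fact that $(s\theta)^+ \in F$ is idempotent, so $((s\theta)^+(st)\theta)^+ = (s\theta)^+((st)\theta)^+$. Part (b) is the exact mirror image, using the other half of $(\wedge 1)'$, the relation $(xy)^* = (x^*y)^*$, and applying $^*$.

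One point I would be careful about is that (ICP5) as stated in Definition~\ref{defn:sicp} quantifies over \emph{all} $s,t$ in the category, with $s\otimes t$ the (everywhere-defined) pseudoproduct, whereas the restricted-product dictionary only makes $s\cdot t$ equal $st$ when $s^* = t^+$. I expect the intended reading is that (ICP5) is really a statement about the pseudoproduct $\otimes = $ the semigroup product, so that on the restriction-semigroup side one is simply asserting $((s\theta)(t\theta))^+ = (s\theta)^+((st)\theta)^+$ for \emph{arbitrary} $s,t\in S$; reconciling the notation (pseudoproduct versus restricted product) is the only mildly delicate bookkeeping step, and it is resolved by noting that $\bc(S)$ and $S$ have the same underlying set with $\otimes$ recovering the $S$-multiplication. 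The genuinely substantive verification --- the displayed idempotent identity --- is a two-line consequence of $(\wedge 1)'$ and Lemma~\ref{lem:+}, so I do not anticipate a serious obstacle; the main risk is purely notational slippage between the category and semigroup pictures.
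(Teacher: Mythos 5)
Your proposal is correct and follows essentially the same route as the paper: cite Proposition~\ref{prop:meetw-ipc} for (ICP1)--(ICP4), then verify (ICP5)(a) by applying $^+$ to $(\wedge 1)'$ and using Lemma~\ref{lem:+} together with the fact that $(s\theta)^+((st)\theta)^+$ lies in the semilattice, with (b) dual. Your notational worry about the quantification in (ICP5) is resolved exactly as you suggest --- the pseudoproduct on $\bc(S)$ is the multiplication of $S$ --- which is also how the paper's computation implicitly reads it.
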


\begin{proof}
By Proposition~\ref{prop:meetw-ipc}, $\Theta$ is an inductive
category prefunctor.

(ICP5)(a) We have
\begin{align*}
\bd((s\Theta)\otimes (t\Theta))
&=((s\theta)(t\theta))^+=((s\theta)^+(st)\theta)^+=((s\theta)^+(st)\theta^+)^+ \\
&=(s\theta)^+(st)\theta^+=\bd(s\Theta)\wedge\bd((s\otimes
t)\Theta).
\end{align*}
Part (b) is similar.
\end{proof}

\begin{prop}
\label{prop:sicp-strong} Let $\psi:C\rightarrow D$ be a strong
inductive category prefunctor.  We define
$\Psi:=\bs(\psi):\bs(C)\rightarrow\bs(D)$ to be the same function
on the underlying sets.  Then $\Psi$ is a strong
$(\wedge,r)$-premorphism with respect to the pseudoproducts in
$\bs(C)$ and $\bs(D)$.
\end{prop}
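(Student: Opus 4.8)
The plan is to show that $\Psi=\bs(\psi)$ satisfies conditions $(\wedge 1)$, $(\wedge 2)$ and $(\wedge 1)'$ of Definition~\ref{defn:strongmeetpre}. The bulk of the work has already been done: by Proposition~\ref{prop:ocp-join}, $\Psi$ is already an ordered $(\wedge,r)$-premorphism (since every strong inductive category prefunctor is in particular an inductive category prefunctor), so conditions $(\wedge 1)$ and $(\wedge 2)$ hold automatically. It therefore only remains to verify the strengthened product condition $(\wedge 1)'$, namely that for $s,t\in\bs(C)$ we have $(s\Psi)\otimes(t\Psi)=(s\Psi)^+\otimes(s\otimes t)\Psi=(s\otimes t)\Psi\otimes(t\Psi)^*$, where all products are the pseudoproduct in $\bs(D)$ and $^+,^*$ denote $\bd,\br$ in $D$.

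First I would translate the desired identity into the language of the inductive category $D$. We have $(s\Psi)\otimes(t\Psi)=(s\psi)\otimes(t\psi)$, and by Proposition~\ref{prop:ocp-join} (via the argument of Lemma~\ref{lem:comp-icp}) this is $\leq(s\otimes t)\psi$. Meanwhile, by Lemma~\ref{lem:apseudoe}, $(s\Psi)^+\otimes(s\otimes t)\Psi=\bd(s\psi)\wedge\bd((s\otimes t)\psi)\,\big|\,(s\otimes t)\psi$, which is a restriction of $(s\otimes t)\psi$ and hence $\leq(s\otimes t)\psi$ as well. So both sides of the first equation in $(\wedge 1)'$ lie below $(s\otimes t)\psi$. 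To apply Lemma~\ref{lem:so-useful} and conclude they are equal, I only need them to share a domain. But $\bd\big((s\Psi)^+\otimes(s\otimes t)\Psi\big)=\bd(s\psi)\wedge\bd((s\otimes t)\psi)$ by Lemma~\ref{lem:apseudoe}, and this is exactly $\bd\big((s\psi)\otimes(t\psi)\big)$ by condition (ICP5)(a). Hence Lemma~\ref{lem:so-useful} gives $(s\Psi)\otimes(t\Psi)=(s\Psi)^+\otimes(s\otimes t)\Psi$. The second equality, $(s\Psi)\otimes(t\Psi)=(s\otimes t)\Psi\otimes(t\Psi)^*$, follows by the dual argument: $(s\otimes t)\psi\otimes\br(t\psi)$ is a corestriction of $(s\otimes t)\psi$, hence $\leq(s\otimes t)\psi$, it has range $\br((s\otimes t)\psi)\wedge\br(t\psi)$ by Lemma~\ref{lem:apseudoe}, and this equals $\br((s\psi)\otimes(t\psi))$ by (ICP5)(b), so Lemma~\ref{lem:so-useful} (range version) closes the argument.

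The main obstacle is essentially bookkeeping: one must be careful that ``$\leq$ in $D$'' and ``$\leq$ in $\bs(D)$'' coincide (they do, since the natural partial order on the restriction semigroup $\bs(D)$ is the ambient order on $D$ by Theorem~\ref{thm:ic->wEas}), and that the identities $(s\Psi)^+=\bd(s\psi)$, $(s\otimes t)\Psi=(s\otimes t)\psi$ hold because $\Psi$ and $\psi$ are literally the same function on underlying sets with $\bs(C)_o=C_o$. No genuine difficulty arises beyond matching domains and ranges and invoking Lemmas~\ref{lem:apseudoe} and~\ref{lem:so-useful} together with the defining conditions (ICP5)(a) and (ICP5)(b); the whole proof is a short two-step application of Lemma~\ref{lem:so-useful}.

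\begin{proof}
Since a strong inductive category prefunctor is in particular an inductive category prefunctor, Proposition~\ref{prop:ocp-join} tells us that $\Psi$ is an ordered $(\wedge,r)$-premorphism; in particular conditions $(\wedge 1)$ and $(\wedge 2)$ hold. It remains to verify $(\wedge 1)'$.

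Let $s,t\in\bs(C)$. We first show $(s\Psi)\otimes (t\Psi)=(s\Psi)^+\otimes (s\otimes t)\Psi$, using Lemma~\ref{lem:so-useful}. By the argument of Lemma~\ref{lem:comp-icp} (see lines \eqref{first-line}--\eqref{last-line}), $(s\Psi)\otimes (t\Psi)=(s\psi)\otimes (t\psi)\leq (s\otimes t)\psi=(s\otimes t)\Psi$. On the other hand, by Lemma~\ref{lem:apseudoe},
$$(s\Psi)^+\otimes (s\otimes t)\Psi=\bd(s\psi)\wedge\bd((s\otimes t)\psi)\,|\,(s\otimes t)\psi,$$
which is a restriction of $(s\otimes t)\psi=(s\otimes t)\Psi$, hence $(s\Psi)^+\otimes (s\otimes t)\Psi\leq (s\otimes t)\Psi$. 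Thus both sides lie below $(s\otimes t)\Psi$. Moreover $\bd((s\Psi)^+\otimes (s\otimes t)\Psi)=\bd(s\psi)\wedge\bd((s\otimes t)\psi)$ by Lemma~\ref{lem:apseudoe}, and this equals $\bd((s\psi)\otimes (t\psi))=\bd((s\Psi)\otimes (t\Psi))$ by (ICP5)(a). By Lemma~\ref{lem:so-useful}, $(s\Psi)\otimes (t\Psi)=(s\Psi)^+\otimes (s\otimes t)\Psi$.

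Dually, by Lemma~\ref{lem:apseudoe},
$$(s\otimes t)\Psi\otimes (t\Psi)^*=(s\otimes t)\psi\,|\,\br((s\otimes t)\psi)\wedge\br(t\psi),$$
a corestriction of $(s\otimes t)\psi$, so $(s\otimes t)\Psi\otimes (t\Psi)^*\leq (s\otimes t)\Psi$; and $(s\Psi)\otimes (t\Psi)\leq (s\otimes t)\Psi$ as above. Their ranges agree: $\br((s\otimes t)\Psi\otimes (t\Psi)^*)=\br((s\otimes t)\psi)\wedge\br(t\psi)$ by Lemma~\ref{lem:apseudoe}, which equals $\br((s\psi)\otimes (t\psi))=\br((s\Psi)\otimes (t\Psi))$ by (ICP5)(b). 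Hence, by Lemma~\ref{lem:so-useful}, $(s\Psi)\otimes (t\Psi)=(s\otimes t)\Psi\otimes (t\Psi)^*$. This establishes $(\wedge 1)'$, so $\Psi$ is a strong $(\wedge,r)$-premorphism.
\end{proof}
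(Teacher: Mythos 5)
Your proof is correct and follows essentially the same route as the paper: invoke Proposition~\ref{prop:ocp-join} for the ordered $(\wedge,r)$-premorphism part, show both sides of each identity in $(\wedge 1)'$ lie below $(s\otimes t)\Psi$, match domains (resp.\ ranges) using (ICP5), and conclude with Lemma~\ref{lem:so-useful}. The only cosmetic difference is that you compute $\bd\bigl((s\Psi)^+\otimes(s\otimes t)\Psi\bigr)$ directly via Lemma~\ref{lem:apseudoe}, whereas the paper reaches the same equality of domains by a short computation in $\bs(D)$; the arguments are interchangeable.
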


\begin{proof}
By Proposition~\ref{prop:ocp-join}, $\Psi$ is an ordered
$(\wedge,r)$-premorphism.

We know that $(s\Psi)\otimes (t\Psi)\leq (s\otimes t)\Psi$ and
that $(s\Psi)^+\otimes (s\otimes t)\Psi\leq (s\otimes t)\Psi$, by \eqref{eq:npo}.  We will use
Lemma~\ref{lem:so-useful} to show that condition $(\wedge 1)'$
holds.  It remains to show that both sides of the desired equality
have the same domain.  From (ICP5), we have:
\begin{align*}
\bd((s\Psi)\otimes (t\Psi)) &=\bd(s\Psi)\wedge\bd((s\otimes t)\Psi)=\bd(\bd(s\Psi)\wedge\bd((s\otimes t)\Psi)) \\
                                                        &=((s\Psi)^+\otimes (s\otimes t)\Psi^+)^+=((s\Psi)^+\otimes (s\otimes t)\Psi)^+ \\
                                                        &=\bd((s\Psi)^+\otimes (s\otimes t)\Psi).
\end{align*}
So by Lemma~\ref{lem:so-useful}, $(s\Psi)\otimes
(t\Psi)=(s\Psi)^+\otimes (s\otimes t)\Psi$.  By a similar
argument, $(s\Psi)\otimes (t\Psi)=(s\otimes t)\Psi\otimes
(t\Psi)^*$.
\end{proof}

It is clear that if $\theta:S\rightarrow T$ is a strong
$(\wedge,r)$-premorphism and $\psi:C\rightarrow T$ is a strong
inductive category prefunctor, then $\bs(\bc(\theta))=\theta$ and
$\bc(\bs(\psi))=\psi$.  Furthermore, if $\theta':T\rightarrow T'$
is another strong $(\wedge,r)$-premorphism, then
$\bc(\theta\theta')=\bc(\theta)\bc(\theta')$.  In order to complete the proof of Theorem~\ref{thm:wEajoin-indcat-strong}, we must prove that the composition of two strong inductive category prefunctors is also a strong inductive category prefunctor, thereby showing that inductive categories together with strong inductive category prefunctors do indeed form a category.

\begin{lem}
\label{lem:strong-comp}
The composition of two strong inductive category prefunctors is a
strong inductive category prefunctor.
\end{lem}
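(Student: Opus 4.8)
The plan is to use the dictionary established in Section~\ref{sec:prelims}, namely that $\bs$ and $\bc$ are mutually inverse constructions (Theorem~\ref{thm:scs-csc}), together with the two transfer results we have just proved (Propositions~\ref{prop:strong-sicp} and~\ref{prop:sicp-strong}), to reduce the composition statement for strong inductive category prefunctors to the already-established composition statement for strong $(\wedge,r)$-premorphisms (Lemma~\ref{lem:comp-strong}). Concretely, let $\psi_1:U\rightarrow V$ and $\psi_2:V\rightarrow W$ be strong inductive category prefunctors. By Proposition~\ref{prop:sicp-strong}, $\bs(\psi_1):\bs(U)\rightarrow\bs(V)$ and $\bs(\psi_2):\bs(V)\rightarrow\bs(W)$ are strong $(\wedge,r)$-premorphisms of the associated restriction semigroups. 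By Lemma~\ref{lem:comp-strong}, the composite $\bs(\psi_1)\bs(\psi_2)$ is again a strong $(\wedge,r)$-premorphism $\bs(U)\rightarrow\bs(W)$.

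Next I would apply Proposition~\ref{prop:strong-sicp} to $\bs(\psi_1)\bs(\psi_2)$: it yields that $\bc\bigl(\bs(\psi_1)\bs(\psi_2)\bigr)$ is a strong inductive category prefunctor $\bc(\bs(U))\rightarrow\bc(\bs(W))$. By Theorem~\ref{thm:scs-csc}, $\bc(\bs(U))=U$ and $\bc(\bs(W))=W$, so this is a strong inductive category prefunctor $U\rightarrow W$. The remaining task is purely bookkeeping: to identify this prefunctor with $\psi_1\psi_2$. Since $\bs$ and $\bc$ act as the identity on underlying sets and functions (as in Lemmas~\ref{lem:ordering}, Propositions~\ref{prop:ocp-join} and~\ref{prop:strong-sicp}), and since $\bc$ is functorial on order-preserving functions in the sense that $\bc(\alpha\beta)=\bc(\alpha)\bc(\beta)$ (used implicitly at the end of the previous subsections), we have
\[
\bc\bigl(\bs(\psi_1)\bs(\psi_2)\bigr)=\bc(\bs(\psi_1))\,\bc(\bs(\psi_2))=\psi_1\psi_2,
\]
using $\bc(\bs(\psi_i))=\psi_i$ once more by Theorem~\ref{thm:scs-csc}. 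Hence $\psi_1\psi_2$ is a strong inductive category prefunctor.

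The only genuine subtlety — and what I expect to be the main point requiring care — is verifying that the composition operations really do match under the $\bs$/$\bc$ correspondence, i.e.\ that applying $\bs$ to a composite of prefunctors gives the composite of the images, and likewise for $\bc$. This is true because both $\bs$ and $\bc$ leave functions unchanged on underlying sets and merely reinterpret the source and target algebraic structures (restricted product versus pseudoproduct), both of which are recovered from one another by Theorems~\ref{thm:wEa-ic}, \ref{thm:ic->wEas} and~\ref{thm:scs-csc}; so composition of functions is literally the same operation on either side. Once this observation is in place, the lemma follows with no further computation. An alternative, more self-contained route — checking (ICP5)(a) and (ICP5)(b) directly for $\psi_1\psi_2$ by the kind of manipulation used in Lemma~\ref{lem:comp-icp} — is also available, but it would essentially duplicate the domain/range calculations already carried out in Proposition~\ref{prop:sicp-strong} and Lemma~\ref{lem:comp-strong}, so the reduction via the correspondence is the cleaner argument to present.
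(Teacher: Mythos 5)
Your proposal is correct and follows essentially the same route as the paper's own proof: transfer to the semigroup side via Proposition~\ref{prop:sicp-strong}, compose using Lemma~\ref{lem:comp-strong}, identify the composite with $\psi_1\psi_2$ since $\bs$ and $\bc$ leave the underlying functions unchanged, and transfer back via Proposition~\ref{prop:strong-sicp}. The only cosmetic difference is that the paper phrases the bookkeeping step through $\bs(\psi_1\psi_2)$ (noting $\psi_1\psi_2$ is at least an inductive category prefunctor) rather than through $\bc\bigl(\bs(\psi_1)\bs(\psi_2)\bigr)$, but the argument is the same.
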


\begin{proof}
We take an indirect approach using the preceding proposition.  Let
$\psi_1:U\rightarrow V$ and $\psi_2:V\rightarrow W$ be strong
inductive category prefunctors between inductive categories $U$,
$V$ and $W$.  By Proposition~\ref{prop:sicp-strong}, we can
construct strong $(\wedge,r)$-premorphisms
$\bs(\psi_1):\bs(U)\rightarrow\bs(V)$ and
$\bs(\psi_2):\bs(V)\rightarrow\bs(W)$.  Then
$\bs(\psi_1)\bs(\psi_2):\bs(U)\rightarrow\bs(W)$ is a strong
$(\wedge,r)$-premorphism, by Lemma~\ref{lem:comp-strong}.  Now,
$\psi_1\psi_2$ is certainly an inductive category prefunctor (but
is not necessarily strong), in which case, $\bs(\psi_1\psi_2)$ is
an ordered $(\wedge,r)$-premorphism, by
Proposition~\ref{prop:ocp-join}.  Let `$\sim$' denote the
relationship `...is the same function on the underlying sets
as...'.  Then $\psi_1\sim\bs(\psi_1)$ and $\psi_2\sim\bs(\psi_2)$,
so $\psi_1\psi_2\sim\bs(\psi_1)\bs(\psi_2)$.  But
$\psi_1\psi_2\sim\bs(\psi_1\psi_2)$.  We deduce that
$\bs(\psi_1)\bs(\psi_2)=\bs(\psi_1\psi_2)$, hence
$\bs(\psi_1\psi_2)$ is a strong $(\wedge,r)$-premorphism.  Then
$\bc(\bs(\psi_1\psi_2))=\psi_1\psi_2$ is a strong inductive
category prefunctor, by Proposition~\ref{prop:strong-sicp}.
\end{proof}

Thus Theorem~\ref{thm:wEajoin-indcat-strong} is proved.

\section{The inverse case}
\label{sec:inv}

We will now deduce a corollary to
Theorem~\ref{thm:wEajoin-indcat-strong} in the inverse case.  Our
notion of `$\wedge$-premorphism' will simply be that of
Definition~\ref{defn:wedge-i}.  In addition, if such a
$(\wedge,i)$-premorphism is order-preserving, then we will call it
an \emph{ordered} $(\wedge,i)$-premorph\-ism.  We will prove the
following:

\begin{thm}
\label{thm:esn-new} 
The category of inverse semigroups and ordered
$(\wedge,i)$-premorph\-isms is isomorphic to the category of
inductive groupoids and ordered groupoid premorphisms.
\end{thm}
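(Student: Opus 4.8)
The plan is to deduce Theorem~\ref{thm:esn-new} from Theorem~\ref{thm:wEajoin-indcat-strong} by specialising the general restriction-semigroup machinery to the inverse case. As remarked in the excerpt (immediately after the proof of Theorem~\ref{thm:wEa-join}), the constructions $\mathbf C$ and $\mathbf S$ restrict to a bijective correspondence between inverse semigroups and inductive groupoids; this takes care of the `objects' part here as well. So once again only the `arrows' part needs attention. The first task is to show that for inverse semigroups, an \emph{ordered $(\wedge,i)$-premorphism} in the sense of Definition~\ref{defn:wedge-i} (plus order-preservation) is precisely the same thing as a \emph{strong $(\wedge,r)$-premorphism} in the sense of Definition~\ref{defn:strongmeetpre}. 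This is the analogue of Lemma~\ref{lem:veei-veew} for the meet case, and it is exactly the point for which the notion of `strong' was introduced. The second task is to name the category-theoretic object corresponding to a strong inductive category prefunctor in the groupoid case (an \emph{ordered groupoid premorphism}), observing that it is simply a strong inductive category prefunctor between inductive groupoids, and to record that $\mathbf C$, $\mathbf S$ send one to the other. The theorem then follows by composing these identifications with Theorem~\ref{thm:wEajoin-indcat-strong}.

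For the main equivalence, I would argue as follows. Recall that an inverse semigroup is a restriction semigroup with $s^+=ss^{-1}$ and $s^*=s^{-1}s$. $(\Leftarrow)$ If $\theta$ is a strong $(\wedge,r)$-premorphism of inverse semigroups, then it is order-preserving, it satisfies $(\wedge 1)$, and from $(\wedge 2)$ together with $(\wedge 1)'$ one recovers $(\wedge 2)'$: writing $(\wedge 1)'$ as $(s\theta)(s^{-1}\theta)=(s\theta)^+(ss^{-1})\theta=(s\theta)(s\theta)^{-1}(s^+\theta)$, and using $(\wedge 2)$ to show $(s\theta)(s\theta)^{-1}\leq s^+\theta$, one forces $(s^{-1}\theta)=(s\theta)^{-1}$ by a standard manipulation with the natural partial order \eqref{eq:npo2} and uniqueness of inverses (the element $(s^{-1}\theta)$ turns out to be an element $\leq$ some fixed element with prescribed $^+$ and $^*$, so Lemma~\ref{lem:so-useful} applies after passing to $\mathbf C$, or one argues directly). $(\Rightarrow)$ Conversely, if $\theta$ is an ordered $(\wedge,i)$-premorphism, then $(\wedge 2)'$ immediately gives $(s\theta)^+=(s\theta)(s\theta)^{-1}=(s\theta)(s^{-1}\theta)\leq (ss^{-1})\theta=s^+\theta$ by $(\wedge 1)$, which is the first half of $(\wedge 2)$; the second half is dual. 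For $(\wedge 1)'$, compute $(s\theta)^+(st)\theta=(s\theta)(s^{-1}\theta)(st)\theta$, and use $(\wedge 2)'$ and $(\wedge 1)$ in the form $(s^{-1}\theta)(st)\theta\leq (s^{-1}st)\theta=(s^*t)\theta$ together with order-preservation and the identity $(s\theta)(s^*t)\theta\geq (s\theta)(t\theta)$ — and conversely $(s\theta)(t\theta)\leq (st)\theta$ forces $(s\theta)^+(st)\theta\geq (s\theta)^+(s\theta)(t\theta)=(s\theta)(t\theta)$ since $(s\theta)^+$ is a left identity for $s\theta$; equality then follows because both sides lie below $(st)\theta$ — wait, they do not obviously, so instead I would invoke the $\mathbf C$/$\mathbf S$ translation and Lemma~\ref{lem:so-useful} to clinch the equality, exactly as is done in Proposition~\ref{prop:sicp-strong}. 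The `$\;(st)\theta(t\theta)^*$' half is symmetric.

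With the equivalence of arrows established on both sides, the remaining bookkeeping is routine: $\mathbf C$ and $\mathbf S$ are mutually inverse on underlying sets (Theorem~\ref{thm:scs-csc}), they carry ordered $(\wedge,i)$-premorphisms to strong inductive category prefunctors between inductive groupoids and back (Propositions~\ref{prop:strong-sicp} and~\ref{prop:sicp-strong}, restricted to the inverse/groupoid setting), and they respect composition (Lemmas~\ref{lem:comp-strong} and~\ref{lem:strong-comp}). Hence the two categories are isomorphic. I expect the main obstacle to be the $(\Rightarrow)$ direction of the arrow equivalence — specifically, deriving the equality in $(\wedge 1)'$ from the inequality $(\wedge 1)$ plus $(\wedge 2)'$ — and the cleanest route is almost certainly to push everything through the inductive-groupoid side via Lemma~\ref{lem:so-useful}, mirroring the domain/range computation in the proof of Proposition~\ref{prop:sicp-strong}, rather than attempting a purely semigroup-theoretic manipulation.
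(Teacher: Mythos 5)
There is a genuine gap, and it sits exactly where the paper has to work hardest. You treat the identification of \emph{ordered groupoid premorphisms} with strong inductive category prefunctors between inductive groupoids as a mere naming/``observation''. But in the theorem as stated, ``ordered groupoid premorphism'' is Gilbert's pre-existing notion (Definition~\ref{defn:ord-grpd-pre}): it is defined by (ICP1), (ICP3) and the inverse-preservation condition (IGP) $(g\psi)^{-1}=g^{-1}\psi$, and says nothing about (ICP2), (ICP4) or (ICP5). The nontrivial content of the inverse case is precisely Lemma~\ref{lem:igp-sicp}, and its hard half is showing that (IGP) together with (ICP1), (ICP3) forces (ICP5) (Lemma~\ref{lem:icp5}); the paper states that no elementary proof of this seems available and instead factors $\psi=\iota\pb$ through the Szendrei expansion $\szgg$ via Gilbert's Theorem~\ref{thm:grpd-exp} and computes domains there. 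Your proposal supplies no argument for this step, so the theorem with Gilbert's arrows on the groupoid side is not established; what you would obtain is an isomorphism with a category whose arrows are strong inductive category prefunctors, which is Theorem~\ref{thm:wEajoin-indcat-strong} restricted to groupoids, not Theorem~\ref{thm:esn-new}. (You also need the converse direction of Lemma~\ref{lem:igp-sicp}, that a strong inductive category prefunctor of inductive groupoids satisfies (IGP); the paper gets this by passing through Lemma~\ref{lem:strongpre-inverse2} on the semigroup side.)

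On the semigroup side your sketch of the analogue of Lemma~\ref{lem:strongpre-inverse2} is close but not closed. The paper simply cites an external result, whereas you attempt a direct proof; that is legitimate, but your derivation of $(\wedge 1)'$ from $(\wedge 1)$, $(\wedge 2)'$ and order-preservation stalls (``both sides lie below $(st)\theta$'' does not by itself give equality), and your fallback --- ``push through $\bc$/$\bs$ and use Lemma~\ref{lem:so-useful} as in Proposition~\ref{prop:sicp-strong}'' --- is circular, because that domain computation uses (ICP5), which is exactly what is not yet available for $\bc(\theta)$. An elementary fix does exist and is worth recording: you already have $(s\theta)(t\theta)\leq (s\theta)^+(st)\theta$, and for the reverse inequality $(s\theta)^+(st)\theta=(s\theta)(s^{-1}\theta)(st)\theta\leq (s\theta)(s^{-1}st)\theta\leq (s\theta)(t\theta)$, using $(\wedge 2)'$, $(\wedge 1)$, $s^{-1}st\leq t$ and order-preservation; the $^*$-half is dual. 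With that repaired, the semigroup-side equivalence is fine, but the groupoid-side equivalence (the Szendrei-expansion argument or a substitute for it) remains the missing idea.
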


Note that the `objects' part of the above theorem is taken care of by the comments at the end of Section~\ref{sec:join}.

We deduce the following from \cite[Lemma~2.12]{gh}:
\begin{lem}
\label{lem:strongpre-inverse2}
Let $\theta:S\rightarrow T$ be a
mapping between inverse semigroups.  Then $\theta$ is an ordered
$(\wedge,i)$-premorphism if, and only if, it is a strong
$(\wedge,r)$-premorphism of restriction semigroups.
\end{lem}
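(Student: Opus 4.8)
The plan is to view each inverse semigroup as a restriction semigroup with respect to $E(S)$, taking $s^+=ss^{-1}$ and $s^*=s^{-1}s$, and to verify the biconditional by checking the defining conditions in each direction. Two preliminary remarks organise the book-keeping. First, a strong $(\wedge,r)$-premorphism is automatically order-preserving (by the unnamed lemma following Definition~\ref{defn:strongmeetpre}), so it is an \emph{ordered} $(\wedge,r)$-premorphism and Lemma~\ref{lem:meet-props} applies to it; in particular each $s^+\theta$ is an idempotent of $F$. Second, an \emph{ordered} $(\wedge,i)$-premorphism carries order-preservation as a hypothesis, so the order condition matches on both sides and I need only reconcile the multiplicative and idempotent conditions $(\wedge 1),(\wedge 2),(\wedge 1)',(\wedge 2)'$.

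For the direction $(\Rightarrow)$, suppose $\theta$ is an ordered $(\wedge,i)$-premorphism. Condition $(\wedge 1)$ is precisely $(\wedge 1)$ of Definition~\ref{defn:wedge-i}, so it holds verbatim. For $(\wedge 2)$ I rewrite $(s\theta)^+=(s\theta)(s\theta)^{-1}=(s\theta)(s^{-1}\theta)$ using $(\wedge 2)'$, then apply $(\wedge 1)$ to get $(s\theta)(s^{-1}\theta)\leq(ss^{-1})\theta=s^+\theta$, and dually $(s\theta)^*\leq s^*\theta$. The substantial step is $(\wedge 1)'$, which I establish by sandwiching. Using that the natural partial order is compatible with multiplication, left-multiplying $(s\theta)(t\theta)\leq(st)\theta$ by the idempotent $(s\theta)^+$ gives $(s\theta)(t\theta)\leq(s\theta)^+(st)\theta$. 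For the reverse inequality I apply $(\wedge 1)$ to $s^{-1}$ and $st$ to get $(s^{-1}\theta)(st)\theta\leq(s^{-1}st)\theta=(s^*t)\theta\leq t\theta$ (the last step by order-preservation, since $s^*t\leq t$ by \eqref{eq:npo}); left-multiplying by $s\theta$ and using $(s\theta)(s^{-1}\theta)=(s\theta)^+$ yields $(s\theta)^+(st)\theta\leq(s\theta)(t\theta)$. Antisymmetry gives $(s\theta)(t\theta)=(s\theta)^+(st)\theta$, and the symmetric argument (right-multiplying and using $t^{-1}$, $st^+\leq s$) gives the $(t\theta)^*$ equality. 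Hence $\theta$ is a strong $(\wedge,r)$-premorphism.

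For $(\Leftarrow)$, suppose $\theta$ is strong. Then $(\wedge 1)$ and order-preservation hold, so only $(\wedge 2)'$, i.e.\ $(s\theta)^{-1}=s^{-1}\theta$, remains; I obtain it from uniqueness of inverses in $T$. Applying $(\wedge 1)'$ with $t=s^{-1}$ gives $(s\theta)(s^{-1}\theta)=(s\theta)^+(ss^{-1})\theta=(s\theta)^+\cdot s^+\theta$; since $s^+\theta\in F$ is idempotent and $(s\theta)^+\leq s^+\theta$ by $(\wedge 2)$, this collapses in the semilattice $F$ to $(s\theta)^+=(s\theta)(s\theta)^{-1}$, whence $(s\theta)(s^{-1}\theta)(s\theta)=s\theta$. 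Running the same computation with $s^{-1}$ in place of $s$ gives $(s^{-1}\theta)(s\theta)=(s^{-1}\theta)(s^{-1}\theta)^{-1}$ and hence $(s^{-1}\theta)(s\theta)(s^{-1}\theta)=s^{-1}\theta$. Thus $s^{-1}\theta$ is a semigroup inverse of $s\theta$, so $(s\theta)^{-1}=s^{-1}\theta$ and $\theta$ is an ordered $(\wedge,i)$-premorphism.

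The main obstacle is the reverse inequality in $(\wedge 1)'$ in the forward direction: unlike the other conditions it is not a direct one-line consequence of $(\wedge 1)$ and $(\wedge 2)'$, and it requires the specific manipulation that rewrites $(s\theta)^+$ as $(s\theta)(s^{-1}\theta)$ and then controls $(s^{-1}\theta)(st)\theta$ below $t\theta$ via $s^*t\leq t$. Everything else is routine translation between the $aa^{-1}/a^{-1}a$ description of $^+,{}^*$ and the semilattice calculus. I note that this lemma can alternatively be obtained by specialising the one-sided characterisation \cite[Lemma~2.12]{gh} to the two-sided inverse setting, but the self-contained argument above makes the equivalence transparent and isolates exactly where strongness and the inverse $(\wedge 2)'$ trade off against one another.
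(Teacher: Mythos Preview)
Your proof is correct. The paper does not actually prove this lemma at all: it simply records that the statement follows from \cite[Lemma~2.12]{gh} and moves on. You, by contrast, give a self-contained argument, and in fact you acknowledge the paper's route at the end of your proposal. The key idea that is not entirely routine---and that the external reference would supply---is the reverse inequality $(s\theta)^+(st)\theta\leq(s\theta)(t\theta)$ in the forward direction, which you obtain neatly by applying $(\wedge 1)$ to the pair $s^{-1},st$, using order-preservation on $s^*t\leq t$, and then left-multiplying by $s\theta$ to recover $(s\theta)^+$ via $(\wedge 2)'$. Your backward direction, deducing $(\wedge 2)'$ from strongness by showing $s^{-1}\theta$ is a semigroup inverse of $s\theta$, is equally clean. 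The trade-off is exactly as you describe: citing \cite{gh} is shorter and avoids repetition of material already in the literature, whereas your argument keeps the paper self-contained and makes visible precisely how the inverse condition $(\wedge 2)'$ and the strong condition $(\wedge 1)'$ compensate for one another.
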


We now, of course, need a corresponding function between inductive
groupoids.  In his study of the partial actions of inductive
groupoids, Gilbert \cite{gilbert2005} has employed the following
definition:
\begin{defn}\cite[p.~184]{gilbert2005}
\label{defn:ord-grpd-pre}
A function $\psi:G\rightarrow H$ between
inductive groupoids will be called an \emph{ordered groupoid premorphism} if the following conditions are satisfied:
\begin{enumerate}
    \item[(ICP1)] if the product $g\cdot h$ is defined in $G$, then $(g\psi)\otimes (h\psi)\leq (g\cdot h)\psi$;
    \item[(IGP)] $(g\psi)^{-1}=g^{-1}\psi$;
    \item[(ICP3)] if $g\leq h$ in $G$, then $g\psi\leq h\psi$ in $H$.
\end{enumerate}
\end{defn}

We want to show that Gilbert's ordered groupoid premorphisms are a special case of our strong inductive category prefunctors.  In particular, as a first step towards establishing an analogue of Lemma~\ref{lem:strongpre-inverse2}, we need to show that an ordered groupoid premorphism satisfies condition (ICP5).  Unfortunately, an elementary proof of this does not seem to be forthcoming; instead, we provide a proof which employs the machinery of the Szendrei expansion of an inductive groupoid, as introduced in Section~\ref{sec:sz}, together with the following result of Gilbert:

\begin{thm}\textup{\cite[Theorem 4.4 \& Proposition 4.6]{gilbert2005}}
\label{thm:grpd-exp}
Let $G$ and $H$ be inductive groupoids.  If
$\psi:G\rightarrow H$ is an ordered groupoid premorphism, then
there exists a unique inductive functor $\pb:\szgg\rightarrow H$
such that $\iota\pb=\psi$.  Conversely, if $\pb:\szgg\rightarrow
H$ is an inductive functor, then $\psi:=\iota\pb$ is an ordered groupoid premorphism.
\end{thm}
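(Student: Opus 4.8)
The plan is to prove the theorem as a universal property of the Szendrei expansion, exhibiting $\iota:G\rightarrow\szgg$ as the universal ordered groupoid premorphism out of $G$. Before treating either direction I would extract two consequences of the axioms of Definition~\ref{defn:ord-grpd-pre} for a premorphism $\psi:G\rightarrow H$. Applying (ICP1) to the product $e\cdot e=e$ for $e\in G_o$ and using (IGP) (which forces $e\psi=(e\psi)^{-1}$, hence $(e\psi)\cdot(e\psi)=\bd(e\psi)$) gives $\bd(e\psi)\leq e\psi$, whence $e\psi\in H_o$ by Lemma~\ref{lem:so-useful} applied to the pair $\bd(e\psi),e\psi$ below $e\psi$; and applying (ICP1) and (IGP) to $g\cdot g^{-1}=\bd(g)$ yields $\bd(g\psi)=(g\psi)\cdot(g\psi)^{-1}\leq\bd(g)\psi$, with the dual statement for $\br$. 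Thus $\psi$ carries objects to objects and satisfies the domain/range inequalities, which is the form in which the premorphism hypotheses will be used below.

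The crucial structural fact for the forward direction is that $\iota(G)$ generates $\szgg$ under the inductive-groupoid operations. For $(U,u)\in\szgg$ with $\bd(u)=e$, the domain formula of Proposition~\ref{prop:sz} gives $\bd(w\iota)=(\{e,w\},e)$, and the meet formula \eqref{eq:meet} collapses $\bigwedge_{w\in U}\bd(w\iota)$ to $(U,e)=\bd((U,u))$; the restriction formula of Proposition~\ref{prop:sz} then yields $(U,u)=\bigl(\bigwedge_{w\in U}\bd(w\iota)\bigr)|(u\iota)$. Since an inductive functor preserves domains, meets and restrictions, any $\pb$ with $\iota\pb=\psi$ must satisfy $(U,u)\pb=\bigl(\bigwedge_{w\in U}\bd(w\psi)\bigr)|(u\psi)$, which simultaneously proves uniqueness and dictates the definition of $\pb$ in the existence half. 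This restriction is legitimate because $u\in U$ forces $\bigwedge_{w\in U}\bd(w\psi)\leq\bd(u\psi)$, and $\iota\pb=\psi$ is then immediate from the domain inequality $\bd(g\psi)\leq\bd(g)\psi$ together with the identity $\bd(a)|a=a$ of Lemma~\ref{lem:a=dab}, which collapse $(g\iota)\pb$ to $\bd(g\psi)|(g\psi)=g\psi$.

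It remains to check that this $\pb$ is an inductive functor; order-preservation is clear from (ICP3) and Lemma~\ref{lem:rest-ord}, while meet-preservation of objects rests on the premorphism inequalities on objects extracted above. The substantial step — and where I expect the real difficulty to lie — is functoriality. Given $\exists(U,u)\cdot(V,v)$, so that $\br(u)=\bd(v)$, $U=uV$ and the product is $(U,uv)$, one must show that $(U,u)\pb\cdot(V,v)\pb$ is defined in $H$ and equals $(U,uv)\pb$. Using $U=uV$ to rewrite $\bigwedge_{w\in U}\bd(w\psi)=\bigwedge_{z\in V}\bd((uz)\psi)$, and feeding in the premorphism inequalities $(u\psi)\otimes(z\psi)\leq(uz)\psi$ (valid since $\bd(z)=\br(u)$ for $z\in V$), one compares the domains and ranges of the two sides: both lie below the element obtained by composing $u\psi$ with $v\psi$ after a suitable restriction, so the aim is to match their domains (equivalently their ranges) and then invoke Lemma~\ref{lem:so-useful} to upgrade the resulting inequality to the desired equality. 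The bookkeeping with the explicit restriction, meet and pseudoproduct formulae of Proposition~\ref{prop:sz} and \eqref{eq:pseudo-sz}, and in particular tracking how the constraint $U=uV$ propagates through $\pb$, is the delicate part of the whole argument.

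For the converse, let $\pb:\szgg\rightarrow H$ be an inductive functor and put $\psi=\iota\pb$. I would first establish three facts about $\iota$ alone, by direct computation with the formulae of Proposition~\ref{prop:sz}: that $(g\iota)^{-1}=g^{-1}\iota$ (from $(U,u)^{-1}=(u^{-1}U,u^{-1})$ together with $g^{-1}\bd(g)=g^{-1}$ and $g^{-1}g=\br(g)$, which make the two finite sets coincide); that $g\leq h$ in $G$ implies $g\iota\leq h\iota$ (using (Or2) and the identification $\bd(g)|h=g$ from Lemma~\ref{lem:a=dab} to verify the subset condition in the ordering on $\szgg$); and that $(g\iota)\otimes(h\iota)\leq(g\cdot h)\iota$ whenever $\exists g\cdot h$ (a short calculation with \eqref{eq:pseudo-sz} giving $(g\iota)\otimes(h\iota)=(\{\bd(g),g,gh\},gh)$, which sits below $(g\cdot h)\iota=(\{\bd(g),gh\},gh)$). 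Since an inductive functor preserves inverses, order and the pseudoproduct, transporting these three facts through $\pb$ produces exactly (IGP), (ICP3) and (ICP1) for $\psi$, so that $\psi=\iota\pb$ is an ordered groupoid premorphism, completing the proof.
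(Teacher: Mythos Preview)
The paper does not prove this theorem: it is quoted verbatim from Gilbert's paper \cite{gilbert2005} (as Theorem~4.4 and Proposition~4.6 there) and used as a black box in the proof of Lemma~\ref{lem:icp5}. There is therefore no proof in the present paper to compare your proposal against.

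That said, your plan is essentially the standard argument and is correctly structured. The generation formula $(U,u)=\bigl(\bigwedge_{w\in U}\bd(w\iota)\bigr)\big|(u\iota)$ is right (all $w\in U$ share domain $e$, so the meet collapses via \eqref{eq:meet} to $(U,e)$, and the restriction formula then yields $(U,u)$), and the resulting forced definition of $\pb$ is exactly the one Gilbert gives. Your treatment of the converse --- verifying that $\iota$ itself satisfies (ICP1), (IGP), (ICP3) and then pushing these through the inductive functor $\pb$ --- is clean and correct. The one place where your write-up is a sketch rather than a proof is the functoriality of $\pb$: you correctly identify this as the delicate step and indicate the intended mechanism (rewrite via $U=uV$, use (ICP1) for $\psi$ on each $u\cdot z$, then match domains and invoke Lemma~\ref{lem:so-useful}), but the actual bookkeeping is not carried out. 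If you intend this as a complete proof rather than a plan, that gap would need to be filled; if as a plan, it is sound.
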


\begin{lem}
\label{lem:icp5}
Let $\psi:G\rightarrow H$ be an ordered groupoid premorphism.  Then $\psi$ satisfies condition (ICP5).
\end{lem}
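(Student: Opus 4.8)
The plan is to use Theorem~\ref{thm:grpd-exp} to reduce condition (ICP5) for $\psi$ to the corresponding statement for the canonical injection $\iota:G\rightarrow\szgg$, which can then be verified by a direct computation inside the Szendrei expansion. First I would observe that an inductive functor preserves pseudoproducts: indeed $a\otimes b=[a|\br(a)\wedge\bd(b)]\cdot[\br(a)\wedge\bd(b)|b]$ is assembled from domains, ranges, meets, restrictions, corestrictions and the partial product, all of which an inductive functor respects. By Theorem~\ref{thm:grpd-exp} there is a unique inductive functor $\pb:\szgg\rightarrow H$ with $\iota\pb=\psi$, so $(s\psi)\otimes(t\psi)=\bigl((s\iota)\otimes(t\iota)\bigr)\pb$; applying $\pb$ --- which preserves $\bd$, $\br$ and $\wedge$ --- to both sides of the identity $\bd\bigl((s\iota)\otimes(t\iota)\bigr)=\bd(s\iota)\wedge\bd\bigl((s\otimes t)\iota\bigr)$ (and its range-analogue) then yields exactly (ICP5)(a) (respectively (ICP5)(b)) for $\psi$. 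So it suffices to verify (ICP5) for $\iota$.

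For (ICP5)(a) I would write $g=\br(s)\wedge\bd(t)$ and compute both sides in $\szgg$ using the explicit formulas of Proposition~\ref{prop:sz} together with \eqref{eq:dri}, \eqref{eq:meet} and \eqref{eq:pseudo-sz}. The key simplifications are $s\otimes\bd(t)=s|g$ (by Lemma~\ref{lem:apseudoe}) and hence $\bd(s\otimes t)=\bd(s|g)\leq\bd(s)$; using these, together with Lemmas~\ref{lem:rest-ord} and~\ref{lem:a=dab} to evaluate the restrictions appearing in the finite-set components, both $\bd(s\iota)\wedge\bd((s\otimes t)\iota)$ and $\bd\bigl((s\iota)\otimes(t\iota)\bigr)$ collapse to $\bigl(\{\bd(s\otimes t),\,s|g,\,s\otimes t\},\,\bd(s\otimes t)\bigr)$, as required. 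For (ICP5)(b) I would argue in parallel, now using $\br((U,u))=(u^{-1}U,\br(u))$, the factorisation $s\otimes t=[s|g]\cdot[g|t]$ (which gives $(s\otimes t)^{-1}(s|g)=(g|t)^{-1}$ and $\br(s\otimes t)\leq\br(t)$), and the relation $a|f=(f|a^{-1})^{-1}$ for inductive groupoids recorded after Lemma~\ref{lem:rest-ord} (so that $(g|t)^{-1}=t^{-1}|g\leq t^{-1}$); both sides then collapse to $\bigl(\{(s\otimes t)^{-1},\,(g|t)^{-1},\,\br(s\otimes t)\},\,\br(s\otimes t)\bigr)$.

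The step I expect to be the main obstacle is the bookkeeping of the finite-set first coordinates: one must track how each restriction appearing in \eqref{eq:pseudo-sz} and \eqref{eq:meet} acts on every member of the relevant finite sets and confirm that the two sets really coincide. Part (b) is also a little more delicate than (a), since the Szendrei expansion is built from stars (i.e.\ from domains) and so is not literally self-dual; this is what forces the extra corestriction input $(g|t)^{-1}=t^{-1}|g$ noted above.
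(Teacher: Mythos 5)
Your proposal is correct and follows essentially the same route as the paper: decompose $\psi=\iota\pb$ via Theorem~\ref{thm:grpd-exp}, note that the inductive functor $\pb$ preserves pseudoproducts, domains, ranges and meets, and verify the identity by explicit computation in $\szgg$ using \eqref{eq:dri}, \eqref{eq:meet} and \eqref{eq:pseudo-sz}; your common value $(\{\bd(s\otimes t),\,s|g,\,s\otimes t\},\,\bd(s\otimes t))$ agrees with the paper's $(\{\Delta,\,\Delta|s,\,s\otimes t\},\,\Delta)$. The only difference is presentational (you verify (ICP5) for $\iota$ first and then apply $\pb$, and you spell out part (b), which the paper leaves as ``similar''), and your part (b) computation, including $(s\otimes t)^{-1}(s|g)=(g|t)^{-1}$ and $\br(s\otimes t)|t^{-1}=(g|t)^{-1}$, checks out.
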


\begin{proof}
By Theorem~\ref{thm:grpd-exp}, $\psi$ may be decomposed as $\psi=\iota\pb$, where $\iota:G\rightarrow\szgg$ is as in Section~\ref{sec:sz} and $\pb:\szgg\rightarrow H$ is an inductive functor.  Note that an inductive functor preserves products, domains, ranges, ordering, restrictions, corestrictions and meets.  We will demonstrate that (ICP5)(a) holds; part (b) is similar.    
Let $s,t\in G$.  Then $s\psi=s\iota\pb=(\{\bd(s),s\},s)\pb$ and $t\psi=t\iota\pb=(\{\bd(t),t\},t)\pb$.  We have
\begin{align*}
s\psi\otimes t\psi	&=(\{\bd(s),s\},s)\pb\ \otimes\ (\{\bd(t),t\},t)\pb \\
										&=\left[\,(\{\bd(s),s\},s)\ \otimes\ (\{\bd(t),t\},t)\,\right]\pb \\
										&=\left(\,\left.\Delta\right|\{\bd(s),s\}\cup s\otimes\{\bd(t),t\},\ s\otimes t\,\right)\pb,
\end{align*}
by \eqref{eq:pseudo-sz}, where $\Delta=\bd(s|\br(s)\wedge\bd(t))=\bd(s\otimes t)$.  
Now, $\Delta\leq\bd(s)$, so $\Delta|\bd(s)=\Delta$.  Also, $s\otimes\bd(t)=s|\br(s)\wedge\bd(t)=\Delta|s$, by Lemma~\ref{lem:so-useful}, so
$$s\psi\otimes t\psi=\left(\,\{\Delta,\,\Delta|s,\,s\otimes t\},\ s\otimes t\,\right)\pb.$$
Then, by \eqref{eq:dri},
$$\bd(s\psi\otimes t\psi)=\left(\,\{\Delta,\,\Delta|s,\,s\otimes t\},\ \Delta\,\right)\pb.$$

On the other hand,
\begin{align*}
(s\otimes t)\psi=(s\otimes t)\iota\pb	&=\left(\,\{\bd(s\otimes t),\,s\otimes t\},\ s\otimes t\,\right)\pb=\left(\,\{\Delta,\,s\otimes t\},\ s\otimes t\,\right)\pb,
\end{align*}
so that
$$\bd((s\otimes t)\psi)=\left(\,\{\Delta,\,s\otimes t\},\ \Delta\,\right)\pb.$$
Note that $\bd(s\psi)=(\{\bd(s),s\},\bd(s))\pb$.  We have
\begin{align*}
\bd(s\psi)\wedge\bd((s\otimes t)\psi)
&=(\,\{\bd(s),\,s\},\ \bd(s)\,)\pb\ \wedge\ \left(\,\{\Delta,\,s\otimes t\},\ \Delta\,\right)\pb \\
&=\left[\,(\,\{\bd(s),\,s\},\,s),\ \bd(s)\,)\ \wedge\ (\,\{\Delta,\,s\otimes t\},\ \Delta\,)\,\right]\pb \\
&=\left(\,\left.\Delta\right|\{\bd(s),\,s,\,\Delta,\,s\otimes t\},\ \Delta\,\right)\pb\quad \textup{(by \eqref{eq:meet})} \\
&=\left(\,\{\Delta,\,\Delta|s,\,s\otimes t\},\ \Delta\,\right)\pb \\
\intertext{(since $\Delta|(s\otimes t)=s\otimes t$, by Lemma~\ref{lem:a=dab})}
&=\bd(s\psi\otimes t\psi),
\end{align*}
as required.
\end{proof}

We note the following:

\begin{lem}\cite[Lemma~4.2]{gilbert2005}
\label{lem:ord-grpd-pre} 
Let $\psi:G\rightarrow H$ be an ordered groupoid premorphism.  Then, 
for any $g\in G$, $\bd(g\psi)\leq \bd(g)\psi$ and $\br(g\psi)\leq \br(g)\psi$.
\end{lem}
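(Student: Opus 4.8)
The plan is to exploit the fact that in a groupoid the domain and range identities are themselves restricted products. First I would recall that, by condition (G), $\bd(g)=g\cdot g^{-1}$ and $\br(g)=g^{-1}\cdot g$ in $G$, both products being defined. Applying condition (ICP1) to the first of these gives $(g\psi)\otimes(g^{-1}\psi)\leq(g\cdot g^{-1})\psi=\bd(g)\psi$ in $H$, and by (IGP) the left-hand side may be rewritten as $(g\psi)\otimes(g\psi)^{-1}$.

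The next step is to identify $(g\psi)\otimes(g\psi)^{-1}$ with $\bd(g\psi)$. Since $\br(g\psi)=\bd\bigl((g\psi)^{-1}\bigr)$, the restricted product $(g\psi)\cdot(g\psi)^{-1}$ is defined in $H$ and equals $\bd(g\psi)$ by condition (G); as the pseudoproduct $\otimes$ coincides with $\cdot$ whenever the latter is defined, we obtain $(g\psi)\otimes(g\psi)^{-1}=\bd(g\psi)$. Combining this with the inequality of the previous paragraph yields $\bd(g\psi)\leq\bd(g)\psi$. The argument for $\br(g\psi)\leq\br(g)\psi$ is entirely dual, using $\br(g)=g^{-1}\cdot g$ and $(g\psi)^{-1}\otimes(g\psi)=\br(g\psi)$.

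There is no serious obstacle here; the only point that needs a little care is the observation that $\bd(g\psi)$ can be realised as the pseudoproduct $(g\psi)\otimes(g\psi)^{-1}$, which is precisely what allows condition (ICP1) to be brought to bear. I would note in passing that this lemma is exactly the statement that an ordered groupoid premorphism satisfies condition (ICP2), so that, together with Lemma~\ref{lem:icp5}, it establishes that Gilbert's ordered groupoid premorphisms are indeed (strong) inductive category prefunctors, paving the way for the groupoid analogue of Lemma~\ref{lem:strongpre-inverse2}.
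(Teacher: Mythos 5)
Your argument is correct: realising $\bd(g\psi)$ as the (pseudo)product $(g\psi)\otimes(g\psi)^{-1}$ and then applying (ICP1) and (IGP) to $g\cdot g^{-1}=\bd(g)$ gives exactly the stated inequality, with the dual argument for ranges. The paper itself gives no proof, simply citing Gilbert's Lemma~4.2, and your reconstruction is the natural (and essentially Gilbert's) argument, so there is nothing further to add.
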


That is, an ordered groupoid premorphism satisfies condition (ICP2).  Thus, by the two preceding lemmas, every ordered groupoid premorphism is a strong inductive category prefunctor.

\begin{lem}
The composition of two ordered groupoid premorphisms is an ordered groupoid premorphism.
\end{lem}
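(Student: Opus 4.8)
The plan is to reduce the statement to results already in hand rather than to verify condition (ICP1) directly for the composite, which would amount to re-running the pseudoproduct estimate of Lemma~\ref{lem:comp-icp}. Recall that an ordered groupoid premorphism is defined by conditions (ICP1), (IGP) and (ICP3); I would treat these in increasing order of difficulty, for $\psi_1:G\rightarrow H$ and $\psi_2:H\rightarrow K$ ordered groupoid premorphisms of inductive groupoids.

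Condition (ICP3) for $\psi_1\psi_2$ is immediate, since a composite of order-preserving maps is order-preserving. Condition (IGP) is a short computation: for $g\in G$,
$$(g\psi_1\psi_2)^{-1}=((g\psi_1)\psi_2)^{-1}=(g\psi_1)^{-1}\psi_2=(g^{-1}\psi_1)\psi_2=g^{-1}\psi_1\psi_2,$$
applying (IGP) first for $\psi_2$ and then for $\psi_1$.

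The only real content is (ICP1), and here I would invoke the observation made just before this lemma: by Lemmas~\ref{lem:icp5} and~\ref{lem:ord-grpd-pre}, together with the fact that (ICP4) follows from (ICP5), every ordered groupoid premorphism is a strong inductive category prefunctor. Hence $\psi_1$ and $\psi_2$ are strong inductive category prefunctors, so by Lemma~\ref{lem:strong-comp} the composite $\psi_1\psi_2$ is again a strong inductive category prefunctor; in particular it satisfies (ICP1). Combined with the previous paragraph, this shows that $\psi_1\psi_2$ is an ordered groupoid premorphism. The main obstacle is thus not a calculation at all but the structural point of recognising that (ICP1) should be obtained via closure of strong inductive category prefunctors under composition; one should also bear in mind that (IGP) genuinely has to be checked by hand, since a strong inductive category prefunctor between inductive groupoids need not a priori respect inverses — it is only because the two factors are premorphisms of groupoids that the composite inherits (IGP).
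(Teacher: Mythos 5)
Your proposal is correct and follows essentially the same route as the paper: the paper likewise observes that each ordered groupoid premorphism is a strong inductive category prefunctor (via Lemmas~\ref{lem:icp5} and~\ref{lem:ord-grpd-pre}), invokes Lemma~\ref{lem:strong-comp} for the composite, and notes that (IGP) follows by the easy direct computation you give. Your explicit check of (ICP3) is redundant (it is already part of being a strong inductive category prefunctor) but harmless.
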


\begin{proof}
The composition of two ordered groupoid premorphisms is certainly a strong inductive category prefunctor, by Lemma~\ref{lem:strong-comp}.  Condition (IGP) follows easily.
\end{proof}

Inductive groupoids together with ordered groupoid premorphisms therefore form a category.

\begin{prop}
\label{prop:wedgei-igp} Let $\theta:S\rightarrow T$ be an ordered
$(\wedge,i)$-premorphism.  We define
$\Theta:=\bc(\theta):\bc(S)\rightarrow\bc(T)$ to be the same
function on the underlying sets.  Then $\Theta$ is an ordered
groupoid premorphism with respect to the restricted products in
$\bc(S)$ and $\bc(T)$.
\end{prop}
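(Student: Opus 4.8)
The plan is to verify that $\Theta$ satisfies the three defining conditions of an ordered groupoid premorphism (Definition~\ref{defn:ord-grpd-pre}), namely (ICP1), (IGP) and (ICP3). Since $\Theta$ is the same underlying function as $\theta$, and since by Lemma~\ref{lem:strongpre-inverse2} an ordered $(\wedge,i)$-premorphism between inverse semigroups is precisely a strong $(\wedge,r)$-premorphism of the associated restriction semigroups, the natural route is to invoke Proposition~\ref{prop:strong-sicp}: that proposition already tells us $\Theta=\bc(\theta)$ is a strong inductive category prefunctor with respect to the restricted products. In the inverse case, $\bc(S)$ and $\bc(T)$ are inductive groupoids (as recorded in the discussion at the end of Section~\ref{sec:join}), so it remains only to check that, for a strong inductive category prefunctor between inductive \emph{groupoids}, conditions (ICP1)--(ICP3) plus (ICP5) specialise to (ICP1), (IGP), (ICP3).

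First I would dispose of (ICP1) and (ICP3): these are literally among the conditions (ICP1)--(ICP3) defining an inductive category prefunctor, so they hold for $\Theta$ by Proposition~\ref{prop:strong-sicp} (via Proposition~\ref{prop:meetw-ipc}). The only real content is (IGP), that $(g\Theta)^{-1}=g^{-1}\Theta$ for all $g\in\bc(S)$. Here I would argue directly using conditions $(\wedge 2)'$ and $(\wedge 1)'$ for $\theta$. In the inverse case the inverse in the groupoid $\bc(S)$ is the semigroup inverse $g^{-1}$, and $\bd(g)=gg^{-1}$, $\br(g)=g^{-1}g$. Condition $(\wedge 2)'$ of Definition~\ref{defn:wedge-i} gives $(s\theta)^{-1}=s^{-1}\theta$ directly, which is exactly (IGP). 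Alternatively, if one prefers to derive it from the strong $(\wedge,r)$-premorphism axioms, one can use $(\wedge 1)'$ with $s$ and $t=s^{-1}$ to show $(g\Theta)(g^{-1}\Theta)=\bd(g\Theta)\cdot(gg^{-1})\Theta=\bd(g\Theta)$ and similarly $(g^{-1}\Theta)(g\Theta)=\br(g\Theta)$, so that $g^{-1}\Theta$ is a two-sided inverse of $g\Theta$ in the groupoid $\bc(T)$; by uniqueness of inverses in a groupoid, $(g\Theta)^{-1}=g^{-1}\Theta$.

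The main obstacle, if any, is making sure that the passage to the groupoid picture is handled cleanly — specifically, that the restricted product in $\bc(S)$, when $S$ is inverse, genuinely makes $\bc(S)$ an inductive groupoid with $g\mapsto g^{-1}$ as the inversion, and that Lemma~\ref{lem:strongpre-inverse2} applies so that $\theta$ being an ordered $(\wedge,i)$-premorphism does indeed make it a strong $(\wedge,r)$-premorphism in the sense of Definition~\ref{defn:strongmeetpre}. Both of these are already recorded in the excerpt, so the proof is essentially a bookkeeping argument: apply Lemma~\ref{lem:strongpre-inverse2}, then Proposition~\ref{prop:strong-sicp}, observe that conditions (ICP1) and (ICP3) are retained verbatim, and deduce (IGP) from $(\wedge 2)'$ (equivalently from $(\wedge 1)'$ together with uniqueness of groupoid inverses). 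Hence $\Theta$ is an ordered groupoid premorphism, as claimed.
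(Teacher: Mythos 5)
Your main argument is exactly the paper's proof: invoke Lemma~\ref{lem:strongpre-inverse2} to see $\theta$ is a strong $(\wedge,r)$-premorphism, apply Proposition~\ref{prop:strong-sicp} to get that $\Theta$ is a strong inductive category prefunctor (hence (ICP1) and (ICP3)), and read off (IGP) directly from $(\wedge 2)'$ as $(g\Theta)^{-1}=(g\theta)^{-1}=g^{-1}\theta=g^{-1}\Theta$. Only your optional alternative derivation of (IGP) from $(\wedge 1)'$ is shaky --- the semigroup identities $(g\theta)(g^{-1}\theta)=\bd(g\Theta)$ and $(g^{-1}\theta)(g\theta)=\br(g\Theta)$ do not by themselves exhibit $g^{-1}\Theta$ as a \emph{groupoid} inverse unless you first check the restricted products are defined (i.e.\ $(g\theta)^*=(g^{-1}\theta)^+$) --- but since you rely on $(\wedge 2)'$ for the actual proof, this does not affect correctness.
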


\begin{proof}
By Lemma~\ref{lem:strongpre-inverse2}, $\theta$ is a strong
$(\wedge,r)$-premorphism.  Then $\Theta$ is a strong inductive
category prefunctor, by Proposition~\ref{prop:strong-sicp}.  It
remains to show that $\Theta$ satisfies (IGP):
$(g\Theta)^{-1}=(g\theta)^{-1}=g^{-1}\theta=g^{-1}\Theta.$
\end{proof}

\begin{prop}
\label{prop:igp-owip} Let $\psi:G\rightarrow H$ be an ordered
groupoid premorphism.  We define
$\Psi:=\bs(\psi):\bs(G)\rightarrow\bs(H)$ to be the same function
on the underlying sets.  Then $\Psi$ is an ordered
$(\wedge,i)$-premorphism with respect to the pseudoproducts in
$\bs(G)$ and $\bs(H)$.
\end{prop}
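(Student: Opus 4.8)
The plan is to assemble this proposition from results already in place, since all of the substantive work has been carried out in the preceding lemmas. First I would observe that $\psi$ is in fact a strong inductive category prefunctor: conditions (ICP1) and (ICP3) are built into Definition~\ref{defn:ord-grpd-pre}; condition (ICP2) is exactly Lemma~\ref{lem:ord-grpd-pre}; and condition (ICP5) is Lemma~\ref{lem:icp5}. Since a strong inductive category prefunctor is precisely a function satisfying (ICP1)--(ICP3) together with (ICP5), this establishes that $\psi$ is such a prefunctor between the inductive groupoids $G$ and $H$, regarded as inductive categories.

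Next I would invoke Proposition~\ref{prop:sicp-strong} applied to $\psi$: this yields that $\Psi=\bs(\psi)$ is a strong $(\wedge,r)$-premorphism with respect to the pseudoproducts in $\bs(G)$ and $\bs(H)$. Here it is important that $\bs(G)$ and $\bs(H)$ are not merely restriction semigroups but inverse semigroups; this follows because the constructions of Theorems~\ref{thm:wEa-ic} and~\ref{thm:ic->wEas} restrict to the inverse case, so that the restriction semigroup associated to an inductive groupoid is an inverse semigroup.

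Finally I would apply Lemma~\ref{lem:strongpre-inverse2} in the direction that reads: a strong $(\wedge,r)$-premorphism between inverse semigroups is an ordered $(\wedge,i)$-premorphism. Since $\Psi:\bs(G)\rightarrow\bs(H)$ is a strong $(\wedge,r)$-premorphism and both $\bs(G)$ and $\bs(H)$ are inverse, we conclude that $\Psi$ is an ordered $(\wedge,i)$-premorphism, which is the assertion of the proposition.

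There is no genuine obstacle to this argument; the only point requiring any care is the verification that $\psi$ qualifies as a strong inductive category prefunctor, and this has been reduced entirely to Lemmas~\ref{lem:ord-grpd-pre} and~\ref{lem:icp5}. If a more self-contained treatment were desired, one could instead unwind Proposition~\ref{prop:sicp-strong} and Lemma~\ref{lem:strongpre-inverse2} and check directly that $\Psi$ satisfies $(\wedge 1)$, $(\wedge 1)'$ and $(\wedge 2)'$, but doing so would merely re-trace computations already recorded in the paper.
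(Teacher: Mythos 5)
Your proposal is correct, and its first two steps coincide exactly with the paper's proof: $\psi$ is a strong inductive category prefunctor by Lemmas~\ref{lem:ord-grpd-pre} and~\ref{lem:icp5}, and then Proposition~\ref{prop:sicp-strong} gives that $\Psi$ is a strong $(\wedge,r)$-premorphism. The difference lies in the final step. The paper does not invoke Lemma~\ref{lem:strongpre-inverse2} here at all: having obtained the strong $(\wedge,r)$-premorphism (which already yields $(\wedge 1)$ and order-preservation), it simply verifies the remaining condition $(\wedge 2)'$ directly from (IGP) via the one-line computation $(g\Psi)^{-1}=(g\psi)^{-1}=g^{-1}\psi=g^{-1}\Psi$. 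You instead close the argument by citing the converse direction of Lemma~\ref{lem:strongpre-inverse2} (itself imported from an external source), together with the observation--which you rightly make explicit--that $\bs(G)$ and $\bs(H)$ are inverse semigroups, so that the lemma applies. Both routes are sound and free of circularity (Lemma~\ref{lem:strongpre-inverse2} does not depend on this proposition), and indeed the paper uses precisely your style of argument later, in the proof of Lemma~\ref{lem:igp-sicp}. What the paper's version buys is a more self-contained and transparent step, showing exactly where the groupoid-specific hypothesis (IGP) enters; what yours buys is uniformity, deriving $(\wedge 2)'$ for free from the black-box equivalence rather than checking it by hand.
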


\begin{proof}
We know that $\psi$ is a strong inductive category prefunctor.  Then, by
Proposition~\ref{prop:sicp-strong}, $\Psi$ is a strong $(\wedge,r)$-premorphism.  It remains to show that condition
$(\wedge 2)'$ is satisfied:
$(g\Psi)^{-1}=(g\psi)^{-1}=g^{-1}\psi=g^{-1}\Psi,$ as required.
\end{proof}

Thus ordered groupoid premorphisms and ordered $(\wedge,i)$-premorphisms are connected in the manner required to prove Theorem~\ref{thm:esn-new}.  We can now complete our inductive groupoid analogue of Lemma~\ref{lem:strongpre-inverse2}:

\begin{lem}
\label{lem:igp-sicp}
Let $\psi:G\rightarrow H$ be a function between inductive
groupoids.  Then $\psi$ is an ordered groupoid premorphism if,
and only if, it is a strong inductive category prefunctor.
\end{lem}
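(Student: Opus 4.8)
The plan is to prove the two implications separately, and in both directions to lean on the results already established for (strong) $(\wedge,r)$-premorphisms. The forward implication ($\Rightarrow$) needs nothing new: an ordered groupoid premorphism satisfies (ICP1) and (ICP3) by its very definition, satisfies (ICP2) by Lemma~\ref{lem:ord-grpd-pre}, and satisfies (ICP5) by Lemma~\ref{lem:icp5}; since a strong inductive category prefunctor is exactly a function obeying (ICP1)--(ICP3) together with (ICP5), this direction is immediate. (This is precisely the observation recorded in the paragraph just before the statement.)

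For the converse ($\Leftarrow$), I would start from a strong inductive category prefunctor $\psi:G\rightarrow H$ and note that conditions (ICP1) and (ICP3) in Definition~\ref{defn:ord-grpd-pre} are word-for-word among the conditions satisfied by a strong inductive category prefunctor, so the whole task reduces to verifying (IGP), namely $(g\psi)^{-1}=g^{-1}\psi$ for all $g\in G$. My strategy for (IGP) is to descend to the inverse-semigroup level. Since $G$ and $H$ are inductive groupoids, $\bs(G)$ and $\bs(H)$ are inverse semigroups; by Proposition~\ref{prop:sicp-strong}, the function $\Psi:=\bs(\psi):\bs(G)\rightarrow\bs(H)$ (the same function on underlying sets) is a strong $(\wedge,r)$-premorphism, hence by Lemma~\ref{lem:strongpre-inverse2} it is an ordered $(\wedge,i)$-premorphism. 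In particular $\Psi$ satisfies condition $(\wedge 2)'$, i.e.\ $(g\Psi)^{-1}=g^{-1}\Psi$, where $^{-1}$ now denotes the inverse operation of the inverse semigroups $\bs(G)$ and $\bs(H)$.

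The step I expect to be the main obstacle — though it is more a matter of care than of difficulty — is checking that the semigroup inverse of $g$ in $\bs(G)$ coincides with the groupoid inverse $g^{-1}$ in $G$, so that the identity above really translates into (IGP). This is settled directly using Lemma~\ref{lem:apseudoe} and Lemma~\ref{lem:a=dab}: since $\exists g\cdot g^{-1}$ in $G$, we have $g\otimes g^{-1}=g\cdot g^{-1}=\bd(g)$, and then $\bd(g)\otimes g=\bd(g)\wedge\bd(g)|g=\bd(g)|g=g$; symmetrically $g^{-1}\otimes g\otimes g^{-1}=g^{-1}$. So $g^{-1}$ is an inverse of $g$ in the inverse semigroup $\bs(G)$, and by uniqueness of inverses it is the inverse; the same holds in $\bs(H)$. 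Since $\psi$ and $\Psi$ are the same function on the underlying sets, $(g\Psi)^{-1}=g^{-1}\Psi$ reads as $(g\psi)^{-1}=g^{-1}\psi$ with $^{-1}$ interpreted as the groupoid inverse, which is exactly (IGP). Together with (ICP1) and (ICP3), this shows $\psi$ is an ordered groupoid premorphism, completing the converse and hence the lemma.
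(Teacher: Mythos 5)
Your proof is correct and takes essentially the same route as the paper's: the forward direction is exactly Lemmas~\ref{lem:ord-grpd-pre} and~\ref{lem:icp5}, and the converse passes to $\bs(\psi)$ via Proposition~\ref{prop:sicp-strong} and Lemma~\ref{lem:strongpre-inverse2}. The only (cosmetic) difference is that the paper concludes by citing Proposition~\ref{prop:wedgei-igp} together with $\bc(\bs(\psi))=\psi$, whereas you verify (IGP) directly by checking that the groupoid inverse of $g$ coincides with its semigroup inverse in $\bs(G)$ --- a check the paper's cited proposition leaves implicit.
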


\begin{proof}
$(\Rightarrow)$  Suppose that $\psi:G\rightarrow H$ is an
ordered groupoid premorphism.  It follows from Lemmas~\ref{lem:icp5} and~\ref{lem:ord-grpd-pre} that $\psi$ is a strong inductive category prefunctor.

$(\Leftarrow)$  Suppose that $\psi:G\rightarrow H$ is a strong
inductive category prefunctor of inductive groupoids.  By
Proposition~\ref{prop:sicp-strong}, we have a strong
$(\wedge,r)$-premorphism $\bs(\psi):\bs(G)\rightarrow\bs(H)$.  But
this is 
a strong $(\wedge,r)$-premorphism 
of inverse semigroups, so, by
Lemma~\ref{lem:strongpre-inverse2}, it is an ordered
$(\wedge,i)$-premorphism.  Then $\bc(\bs(\psi))=\psi$ is an
ordered groupoid premorphism, by
Proposition~\ref{prop:wedgei-igp}.
\end{proof}

As ever, it is clear that if $\theta:S\rightarrow T$ is an ordered
$(\wedge,i)$-premorphism and $\psi:G\rightarrow H$ is an ordered
groupoid premorphism, then $\bs(\bc(\theta))=\theta$ and
$\bc(\bs(\psi))=\psi$.  Also, if $\theta':T\rightarrow T'$ is
another ordered $(\wedge,i)$-premorphism of inverse semigroups,
and $\psi':H\rightarrow H'$ is another ordered groupoid
premorphism, then $\bc(\theta\theta')=\bc(\theta)\bc(\theta')$ and
$\bs(\psi\psi')=\bs(\psi)\bs(\psi')$.  We have proved
Theorem~\ref{thm:esn-new}.

Let ${\bf REST}$ denote a category whose objects are restriction semigroups; subscripts of `mor', `strong', `$\wedge$'
and `$\vee$' will denote that the arrows of the category are
(2,1,1)-morphisms, strong $(\wedge,r)$-premorphisms, ordered
$(\wedge,r)$-premorph\-isms and $(\vee,r)$-premorphisms, respectively.
Thus, for example, ${\bf REST}_\textup{strong}$ denotes the
category of restriction semigroups and strong
$(\wedge,r)$-premorphisms.  Similarly, ${\bf INV}$ will denote a
category whose objects are inverse semigroups; the subscripts
applied here will be `mor', `$\wedge$' and `$\vee$', denoting
morphisms, ordered $(\wedge,i)$-premorphisms and
$(\vee,i)$-premorphisms, respectively.  A category whose objects are
inductive categories or inductive groupoids will be denoted by
${\bf IC}$ or ${\bf IG}$, as appropriate.  A subscript of `ord' or
`ind' will denote ordered functors or inductive functors,
respectively, whilst `pre', `strong' and `ogp' will denote inductive
category prefunctors, strong inductive
category prefunctors and ordered groupoid premorphisms, respectively.  We can now summarise the
connections between the various categories in this paper in the
following pair of Hasse diagrams: \vspace{0.75cm}

\begin{center}
\begin{tabular}{ccc}
\begin{pspicture}(-2,-2)(2,2)
\psline(-2,2)(-2,-1) \psline(-2,2)(0,0) \psline(2,2)(2,1)
\psline(2,1)(0,0) \psline(2,1)(2,-1) \psline(0,0)(0,-2)
\psline(-2,-1)(0,-2) \psline(2,-1)(0,-2)

\rput*(-2,2){${\bf REST}_\vee$}
\rput*(2,2){${\bf REST}_\wedge$}

\rput*(2,1){${\bf REST}_\textup{strong}$}
\rput*(0,0){${\bf REST}_\textup{mor}$}

\rput*(-2,-1){${\bf INV}_\vee$} \rput*(2,-1){${\bf INV}_\wedge$}

\rput*(0,-2){${\bf INV}_\textup{mor}$}
\end{pspicture}
 & \qquad\qquad\qquad\qquad &
\begin{pspicture}(-2,-2)(2,2)
\psline(-2,2)(-2,-1) \psline(-2,2)(0,0) \psline(2,2)(2,1)
\psline(2,1)(0,0) \psline(2,1)(2,-1) \psline(0,0)(0,-2)
\psline(-2,-1)(0,-2) \psline(2,-1)(0,-2)

\rput*(-2,2){${\bf IC}_\textup{ord}$} \rput*(2,2){${\bf
IC}_\textup{pre}$}

\rput*(2,1){${\bf IC}_\textup{strong}$}

\rput*(0,0){${\bf IC}_\textup{ind}$}

\rput*(-2,-1){${\bf IG}_\textup{ord}$} \rput*(2,-1){${\bf
IG}_\textup{ogp}$}

\rput*(0,-2){${\bf IG}_\textup{ind}$}
\end{pspicture}
\end{tabular}
\end{center}
\vspace{0.75cm}

Each category in the left-hand diagram is isomorphic to the
corresponding category in the right-hand diagram, and vice versa.

\end{document}